\numberwithin{equation}{section}
\theoremstyle{plain}
\newtheorem{theorem}{Theorem}[section]
\newtheorem{proposition}[theorem]{Proposition}
\newtheorem{lemma}[theorem]{Lemma}
\newtheorem{corollary}[theorem]{Corollary}
\newtheorem{assumption}{Assumption}
\newenvironment{assumptionp}[1]{
	
	\assumptionalt
}{\endassumptionalt}
\theoremstyle{definition}
\newtheorem{example}[theorem]{Example}
\theoremstyle{remark}
\newtheorem{remark}[theorem]{Remark}
\newcommand{\E}{\mathbb{E}}
\newcommand{\R}{\mathbb{R}}
\newcommand{\N}{\mathbb{N}}
\renewcommand{\P}{\mathbb{P}}
\newcommand{\Fc}{\mathcal{F}}
\begin{document}
\title{Deep neural network expressivity for optimal stopping problems}

\author{Lukas Gonon\thanks{Department of Mathematics, Imperial College London (l.gonon@imperial.ac.uk)}}

\maketitle
\begin{abstract}
This article studies deep neural network expression rates for optimal stopping problems of discrete-time Markov processes on high-dimensional state spaces. A general framework is established in which the value function and continuation value
of an optimal stopping problem can be approximated with error at most $\varepsilon$ by a deep ReLU neural network of size at most $\kappa d^{\mathfrak{q}} \varepsilon^{-\mathfrak{r}}$. The constants $\kappa,\mathfrak{q},\mathfrak{r} \geq 0$ do not depend on the dimension $d$ of the state space or the approximation accuracy $\varepsilon$. This proves that deep neural networks do not suffer from the curse of dimensionality when employed to solve optimal stopping problems. 
The framework covers, for example,  exponential L\'evy models, discrete diffusion processes and their running minima and maxima. 
These results mathematically justify the use of deep neural networks for numerically solving optimal stopping problems and  pricing American options in high dimensions. 
\end{abstract}
%

\section{Introduction}
\label{sec:Intro}
In the past years, neural network-based methods have been used ubiquitously in all areas of science, technology, economics and finance. In particular, such methods have been applied to various problems in mathematical finance such as pricing,  hedging and calibration. We refer, for instance, to the articles \cite{Buehler2018}, \cite{Becker2020}, \cite{Becker2021a}, \cite{Cuchiero2019} and 
to the survey papers \cite{Ruf2020}, \cite{Germain2021}, \cite{Beck2020} for an overview and further references. The striking computational performance of these methods has also raised questions regarding their theoretical foundations. Towards a complete theoretical understanding, there have been recent results in the literature which prove that deep neural networks are able to approximate option prices in various models 
without the curse of dimensionality. For deep neural network expressivity results for option prices and associated PDEs we refer, for instance, to  \cite{EGJS18_787}, \cite{HornungJentzen2018} for European options in Black-Scholes models, to \cite{HutzenthalerJentzenKruse2019}, \cite{Cioica2022} for certain semilinear PDEs, to \cite{Grohs2021} for certain Hamilton-Jacobi-Bellman equations, to \cite{ReisingerZhang2019}, \cite{JentzenSalimova2021}, \cite{Takahashi2021} for diffusion models and game-type options 
and to \cite{GS21} for certain path-dependent options in jump-diffusion models. A few works are also concerned with generalization (\cite{BernerGrohsJentzen2018}) and learning errors (\cite{Gonon2021}).

The goal of this article is to analyse deep neural network expressivity for American option prices and general optimal stopping problems in discrete-time.
An optimal stopping problem consists in selecting a stopping time $\tau$ such that the expected reward $\E[g_d(\tau,X_\tau^d)]$ is maximized. Here $X^d$ is a given stochastic process taking values in $\R^d$ and $g_d(t,x)$ is the reward obtained if the process is stopped at time $t$ at state $x$. 
Optimal stopping problems arise in a wide range of contexts in statistics, operations research, economics and finance. In mathematical finance, arbitrage-free prices of American and Bermudan options are given as solutions to optimal stopping problems.
The solution to an optimal stopping problem can be described by the so-called Snell envelope or, equivalently, by a backward recursion (discrete-time) or a free-boundary PDE (continuous-time) in the case when $X^d$ is a Markov process.

In recent years, a wide range of computational methods have been developed to numerically solve optimal stopping problems also in high-dimensional situations, i.e., when the dimension $d$ of the state space is large. For regression-based algorithms we refer, e.g., to \cite{Tsitsiklis2001Regression}, \cite{LongSchw01}, for duality-based methods we refer, e.g., to \cite{Rogers2002}, \cite{Andersen2004PrimalDual},  \cite{Haugh2004}, \cite{Belomestny2009},  
for stochastic grid methods we refer, e.g., to \cite{Broadie2004}, \cite{Jain2015} and for methods based on approximating the exercise boundary we refer, e.g., to 
\cite{Garcia2003}; see for instance also the overview in \cite{Bouchard2012}. Recently proposed methods include signature-based methods \cite{Bayer21} and regression trees \cite{EchChafiq21}.
Furthermore, various methods based on deep neural network approximations of the value function, the continuation value or the exercise boundary of the optimal stopping problem have been proposed, see, for instance, \cite{Kohler2010}, \cite{Becker2019}, \cite{Becker2020}, \cite{Becker2021a}, \cite{Herrera21}, 
\cite{LapeyreLelong2021}, \cite{Reppen22} and the methods for continuous-time free boundary problems \cite{Sirignano2018}, 
\cite{Wang2021}.
For many of these methods also theoretical convergence results or even convergence rates (cf., e.g., \cite{Clement2002},
\cite{Belomestny2011}, \cite{Bayer2021a}) for a fixed dimension $d$ have been established. 

In this article we are interested in mathematically analysing the high-dimensional situation, i.e., in explicitly controlling the dependence on the dimension $d$. We analyse deep neural network approximations for the value function of optimal stopping problems.
We provide general conditions on the reward functions $g_d$ and the  stochastic processes $X^d$ which ensure that the value function (and the continuation value) of an optimal stopping problem can be approximated by deep ReLU neural networks without the curse of dimensionality, i.e.,  that an approximation error of size at most $\varepsilon$ can be achieved by a deep ReLU neural network of size $\kappa d^{\mathfrak{q}} \varepsilon^{-\mathfrak{r}}$ for constants $\kappa,\mathfrak{q},\mathfrak{r} \geq 0$ which do not depend on the dimension $d$ or the accuracy $\varepsilon$. 
The framework, in particular, provides deep neural network expressivity results for prices of American and Bermudan options. 
Our conditions cover most practically relevant payoffs and many popular models such as exponential L\'evy models and discrete diffusion processes. 
The constants $\kappa,\mathfrak{q},\mathfrak{r}$ are explicit and thus the obtained results yield bounds for the approximation error component in any algorithm for optimal stopping and American option pricing in high-dimensions which is based on approximating the value function or the continuation value by deep neural networks.

The remainder of the paper is organized as follows. In Section~\ref{sec:Prelim} we formulate the optimal stopping problem, recall its solution by dynamic programming and introduce the notation for deep neural networks. In Section~\ref{sec:DNN} we formulate the assumptions and main results. Specifically, in Section~\ref{subsec:basic} we formulate a basic framework, Assumptions~\ref{ass:Markov} and \ref{ass:Payoff}, and prove that the value function can be approximated by deep neural networks without the curse of dimensionality, see Theorem~\ref{thm:DNNapprox}. In Section~\ref{subsec:refined} we then provide more refined assumptions on the considered Markov processes and extend the approximation result to this refined framework, see Theorem~\ref{thm:refined}, which is the main result of the article. In Sections~\ref{subsec:expLevy}, \ref{subsec:discretediffusions} and \ref{subsec:runningMax} we then apply this result to exponential L\'evy models, discrete diffusion processes and show that also barrier options can be covered via the running maximum or minimum of such processes. In order to make the presentation more streamlined, most proofs, in particular the proofs of Theorems~\ref{thm:DNNapprox} and \ref{thm:refined}, are postponed to Section~\ref{sec:Proofs}.

\subsection{Notation}
Throughout, we fix a time horizon $T \in \N$ and a probability space $(\Omega,\Fc,\P)$ on which all random variables and processes are defined. For $d \in \N$, $x \in \R^d$, $A \in \R^{d \times d}$ we denote  by $\|x\|$ the Euclidean norm of $x$ and by $\|A\|_{F}$ the Frobenius norm of $A$.  For $f \colon \R^{d_0} \times \R^{d_1} \to \R^{d_2}$ we let
\[
\mathrm{Lip}(f) 
= 
\sup_{\substack{(x_1,y_1),(x_2,y_2) \in \R^{d_0} \times \R^{d_1}\\ x_1 \neq x_2, y_1 \neq y_2}} 
\frac{\|f(x_1,y_1)-f(x_2,y_2)\|}{\|x_1-x_2\|+\|y_1-y_2\|}
\;.
\]

\section{Preliminaries}
\label{sec:Prelim}

In this section we first formulate the optimal stopping problem and recall its solution in terms of the value function. Then we introduce the required notation for deep neural networks. 
\subsection{The optimal stopping problem}
\label{subsec:OptStopping}
For each $d \in \N$ consider a discrete-time $\R^d$-valued Markov process $X^{d}=(X_t^{d})_{t \in \{0,\ldots,T\}}$ and a function $g_d \colon \{0,\ldots,T\} \times \R^d \to \R$. Assume for each $t \in \{0,\ldots,T\}$ that $\E[|g_d(t,X_t^d)|]< \infty$ and let $\mathbb{F}=(\mathcal{F}_t)_{t \in \{ {0,\ldots,T}\}}$ be the filtration generated by $X^d$. Denote by $\mathcal{T}$ the set of $\mathbb{F}$-stopping times $\tau \colon \Omega \to \{0,\ldots,T\}$ and by $\mathcal{T}_t$ the set of $\tau \in \mathcal{T}$ with $t\leq \tau$.  
For notational simplicity we omit the dependence on $d$ in $\mathbb{F}$, $\mathcal{T}$ and $\mathcal{T}_t$. 

The optimal stopping problem consists in computing
\begin{equation}\label{eq:optimalStoppingProblem}
\sup_{\tau \in \mathcal{T}} \E[g_d(\tau,X^d_\tau)].
\end{equation}
Consider the value function $V_d$ defined by the backward recursion 
$V_d(T,x) = g_d(T,x)$ and  
\begin{equation}
\label{eq:recursionValueFunction} V_d(t,x) = \max(g_d(t,x),\E[V_d(t+1,X_{t+1}^d)|X_t^d=x])
\end{equation}
for $t=T-1,\ldots,0$ and $\P \circ (X_t^d)^{-1}$-a.e.\ $x \in \R^d$.
Then knowledge of $V_d$ also allows to compute a stopping time $\tau^* \in \mathcal{T}$ which is a maximizer in \eqref{eq:optimalStoppingProblem}:  
\[
\tau^* = \min \{t \in \{0,\ldots,T\} \, \colon \, V_d(t,X^d_t) = g_d(t,X^d_t)\}
\] 
satisfies $\E[g_d(\tau^*,X_{\tau^*}^d)] = \sup_{\tau \in \mathcal{T}} \E[g_d(\tau,X^d_\tau)]$.
Indeed, by backward induction and the Markov property we obtain that $V_d(t,X^d_t)=U^d_t$, $\P$-a.s., where $U^d$ is the Snell envelope 
defined by the backward recursion $U_T^d = g_d(T,X_T^d)$ and $U_t^d = \max(g_d(t,X_t^d),\E[U_{t+1}^d|\Fc_t])$ for $t=T-1,\ldots,0$. Then, for instance by \cite[Theorem~6.18]{follmerschied}, for all $t \in \{0,\ldots,T\}$, $\P$-a.s.
\begin{equation}\label{eq:auxEq3}
V_d(t,X^d_t) = U_t^d = \mathrm{ess}\sup_{\tau \in \mathcal{T}_t} \E[g_d(\tau,X_\tau^d) | \Fc_t] = \E[g_d(\tau_{\min}^{(t)},X_{\tau_{\min}^{(t)}}^d) | \Fc_t],
\end{equation}
where $\tau_{\min}^{(t)} = \inf\{s \geq t \, \colon \, U_s^d = g_d(s,X_s^d) \}$. In particular, $\tau_{\min}^{(0)}=\tau^*$ is a maximizer of \eqref{eq:optimalStoppingProblem} and, in the case when $X^d_0$ is constant, $V_d(0,X_0^d)$ is the optimal value in \eqref{eq:optimalStoppingProblem}.

The idea of our approach is as follows: in many situations the function $g_d$ is in fact a neural network or can be approximated well by a deep neural network. Then the recursion \eqref{eq:recursionValueFunction} also yields a recursion for a neural network approximation. This argument will be made precise in the proof of Theorem~\ref{thm:DNNapprox} below.

\begin{remark}
Alternatively, we could also define 
\begin{equation}
\label{eq:valueFunction}
V_d(t,x) = \sup_{\tau \in \mathcal{T}_t} \E[g_d(\tau,X^d_\tau) |X^d_t = x].
\end{equation}
Then under the strong Markov property, for each $\tau \in \mathcal{T}_t$ it holds that $\E[g_d(\tau,X_\tau^d) | \Fc_t] = h_\tau^d(X_t^d)$, $\P$-a.s., where $h_\tau^d(x)=\E[g_d(\tau,X_\tau^d) |X_t^d=x]$. 
The definition of the essential supremum then implies that for each $\tau \in \mathcal{T}_t$ it holds $\P$-a.s.\ that
$h_{\tau_{\min}^{(t)}}^d(X_t^d) \geq h_\tau^d(X_t^d)$. But this implies that for $\P \circ (X_t^d)^{-1}$-a.e. $x \in \R^d$  and all $\tau \in \mathcal{T}_t$ it holds that $h_{\tau_{\min}^{(t)}}^d(x) \geq h_\tau^d(x)$, hence $h_{\tau_{\min}^{(t)}}^d(x) \geq \sup_{\tau \in \mathcal{T}_t} h_\tau^d(x)$ for each such $x$. Combining this and \cite[Theorem~6.18]{follmerschied} yields that $\P$-a.s.,
\begin{equation}\label{eq:auxEq4}
U_t^d =  h_{\tau_{\min}^{(t)}}^d(X_t^d) = \sup_{\tau \in \mathcal{T}_t} h_\tau^d(X_t^d) = V_d(t,X_t^d).
\end{equation}
By definition of the Snell envelope, this then yields the  recursion \eqref{eq:recursionValueFunction} for the value function.
\end{remark}

\begin{remark}
The conditional expectation in \eqref{eq:valueFunction} is defined in terms of the transition kernels $\mu^d_{s,t}$, $0\leq s < t \leq T$ of the Markov process $X^d$
(see \cite[p.143]{Kallenberg2002}). In fact, formally we start with transition kernels $\mu^d$ on $\R^d$ which then allow us to construct a family of probability measures $\P_x$ on the canonical path space $((\R^d)^{T+1},\mathcal{B}((\R^d)^{T+1}))$ such that, under $\P_x$, the coordinate process is a Markov process starting at $x$ and with transition kernels $\mu^d$. We refer to \cite[Theorem~8.4]{Kallenberg2002} or \cite[Chapter~II.4.1]{peskir2006optimal}; see also \cite[Chapter~1]{revuz1984markov}.
\end{remark}

\subsection{Deep neural networks}
In this article we will consider neural networks with the ReLU activation function $\varrho \colon \R \to \R$ given by $\varrho(x)=\max(x,0)$. 
For each $d \in \N$ we also denote by $\varrho \colon \R^d \to \R^d $ the compontentwise application of the ReLU activation function. 
Let $L,d \in \N$, $N_0:=d$, $N_1,\ldots,N_L \in \N$ and $A^{\ell} \in \R^{N_\ell \times N_{\ell -1}}$, $b^\ell \in \R^{N_\ell}$ for $\ell = 1,\ldots,L$. A deep neural network with $L$ layers, $d$-dimensional input, activation function $\varrho$ and parameters $((A^1,b^1),\ldots,(A^L,b^L))$ is the function $\phi \colon \R^d \to \R^{N_L}$ given by 
\begin{equation}\label{eq:DNNdef}
\phi(x) = \mathcal{W}_L \circ (\varrho \circ \mathcal{W}_{L-1}) \circ \cdots \circ (\varrho \circ \mathcal{W}_1)(x), \quad x \in \R^d
\end{equation} 
where $\mathcal{W}_\ell \colon \R^{N_{\ell-1}} \to \R^{N_\ell}$ denotes the (affine) function $\mathcal{W}_\ell(z) = A^\ell z + b^\ell$ for $z \in \R^{N_{\ell-1}}$ and $\ell = 1,\ldots,L$. 
We let 
\[
\mathrm{size}(\phi) = \sum_{\ell=1}^L \sum_{i=1}^{N_\ell} \left( \mathbbm{1}_{\{b^\ell_i \neq 0\}} + \sum_{j=1}^{N_{\ell-1}} \mathbbm{1}_{\{A^\ell_{i,j} \neq 0\}} \right)
\]
denote the total number of non-zero entries in the parameter matrices and vectors of the neural network.  
In most cases the number of layers, the activation function and the parameters of the network are not mentioned explicitly and we simply speak of a deep neural network $\phi \colon \R^d \to \R^{N_L}$. We say that a function $f \colon \R^d \to \R^{m}$ can be realized by a deep neural network if there exists a deep neural network $\phi \colon \R^d \to \R^m$ such that $f(x) = \phi(x)$ for all $x \in \R^d$. 
In the literature a deep neural network is often defined as the collection of parameters $\Phi= ((A^1,b^1),\ldots,(A^L,b^L))$ and $\phi$ in \eqref{eq:DNNdef} is called the realization of $\Phi$, see for instance \cite{PETERSEN2018296}, \cite{Opschoor2020}, \cite{GS20_925}. In order to simplify the notation we do not distinguish between the neural network realization and its parameters here, since the parameters are always (at least implicitly) part of the definition. 
Note that in general a function $f$ may admit several different realizations by deep neural networks, i.e., several different choices of parameters may result in the same realization. However, in the present article this is not an issue, because pathological cases are excluded by bounds on the size of the networks.

\section{DNN Approximations for Optimal Stopping Problems} \label{sec:DNN}

This section contains the main results of the article, the deep neural network approximation results for optimal stopping problems. We start by formulating in Assumption~\ref{ass:Markov} a general Markovian framework. In Assumption~\ref{ass:Payoff} we introduce the hypotheses on the reward functions. We then formulate in Theorem~\ref{thm:DNNapprox} the approximation result for this basic framework. Subsequently, we provide a more refined framework, see Assumption~\ref{ass:Markov2}  below, and prove the main result of the article, Theorem~\ref{thm:refined}. This theorem proves that 
 the value function can be approximated by deep neural networks without the curse of dimensionality. Corollary~\ref{cor:continuation} shows that an analogous approximation result also holds for the continuation value. Subsequently, in Sections~\ref{subsec:expLevy}, \ref{subsec:discretediffusions} and \ref{subsec:runningMax} we specialize the result to the case of exponential L\'evy models, discrete diffusion processes and show that also barrier options can be covered by including the running maximum or minimum.

\subsection{Basic framework}
\label{subsec:basic}
Let $p \geq 0$ be a fixed rate of growth. For instance, in financial applications typically $p=1$. 
We start by formulating in Assumption~\ref{ass:Markov} a collection of hypotheses on the Markov processes $X^d$. These hypotheses will be weakened later on in Assumption~\ref{ass:Markov2}. 

\begin{assumption}\label{ass:Markov}[Assumptions on $X^d$]
	\begin{itemize}
		\item[(i)] 
	For each $d \in \N$, $t \in \{0,\ldots,T-1\}$ there exists a measurable function $f^d_t \colon \R^d \times \R^d \to \R^d$ and a random vector $Y^d_t$ such that 
\begin{equation}\label{eq:MarkovUpdate} X_{t+1}^d = f^d_t(X_t^d,Y^d_t).
\end{equation}
\item[(ii)] 
For each $d \in \N$ the random vectors $X_0^d,Y^d_0,\ldots,Y^d_{T-1}$ are independent and $\E[\|X_0^d\|]<\infty$.
\end{itemize}
Furthermore, there exists constants $c>0$, $q\geq 0, \alpha\geq0$ such that 
\begin{itemize}
	\item[(iii)] for all $\varepsilon \in (0,1]$, $d \in \N$, $t \in \{0,\ldots,T-1\}$
	there exists a neural network $\eta_{\varepsilon,d,t} \colon \R^d \times \R^d \to \R^d$ with 
\begin{align}
\label{eq:fapprox}
\| f^d_{t}(x,y) - \eta_{\varepsilon,d,t}(x,y)\| & \leq \varepsilon c d^{q} (1+\|x\|^p + \|y\|^p), \quad \text{ for all } x,y \in \R^d, 
\\ \label{eq:etasparse} 
\mathrm{size}(\eta_{\varepsilon,d,t}) &\leq c d^{q} \varepsilon^{-\alpha}, 
\\ \label{eq:etaLipschitz}
\mathrm{Lip}(\eta_{\varepsilon,d,t}) & \leq c d^{q},
\end{align}
\item[(iv)] for all $d \in \N$, $t \in \{0,\ldots,T-1\}$ it holds that $\|f_t^d(0,0)\|\leq c d^q$ and $\E[\|Y_{t}^d\|^{2\max{(2,p)}}] \leq c d^q$.
	\end{itemize}
\end{assumption}
Assumption~\ref{ass:Markov}(i) requires a recursive updating of the Markov processes $X^d$ according to update functions $f^d_t$ and noise processes $Y^d$. Assumption~\ref{ass:Markov}(ii) asks that the noise random variables and the initial condition are independent. Assumption~\ref{ass:Markov}(iii) imposes that the updating functions $f^d_t$ can be approximated well by deep neural networks. Finally, Assumption~\ref{ass:Markov}(iv) requires that certain moments of the noise random variables and the ``constant parts'' of the update functions exhibit at most polynomial growth.

\begin{remark}
In Assumption~\ref{ass:Markov}(iii)-(iv) we could also put different constants $c$ and $q$ in each of the hypotheses. But then  Assumption~\ref{ass:Markov}(iii)-(iv) still holds with $c$ and $q$ chosen as the respective maximum and so for notational simplicity we choose to directly work with the same constants for all these hypotheses.  
\end{remark}

\begin{remark} \label{rmk:condExpDefined}
Let $s\geq t$ and consider $\bar{g}_{d,s} \colon \R^d \to \R$ defined on all of $\R^d$ with $\E[|\bar{g}_{d,s}(X_s^d)|]<\infty$. Then, under Assumption~\ref{ass:Markov}(i)-(ii), 
\begin{equation}
\label{eq:condExp}
\E[\bar{g}_{d,s}(X_s^d)|X_t^d=x] = \E[[\bar{g}_{d,s} \circ f_{s-1}^d(\cdot,Y_{s-1}^d)\circ\cdots \circ f_t^d(\cdot,Y_t^d)](x)]
\end{equation}
for $\P \circ (X_t^d)^{-1}$-a.e.\ $x \in \R^d$. But the right hand side of \eqref{eq:condExp} is defined for any $x \in \R^d$ for which the expectation is finite, and so in what follows we will also consider the conditional expectation  $\E[\bar{g}_{d,s}(X_s^d)|X_t^d=x]$ to be defined for all such $x \in \R^d$ (by \eqref{eq:condExp}). Note that also $\E[|\max(g_{d}(t,X_t^d),\E[\bar{g}_{d,s}(X_s^d)|X_t^d])|]\leq \E[|g_{d}(t,X_t^d)|+|\bar{g}_{d,s}(X_s^d)|] <\infty$ and so by backward induction this reasoning allows to define in our framework the value function $V_d(t,\cdot)$ on all of $\R^d$, for each $t$. 
\end{remark}

Next, we formulate a collection of hypotheses on the reward (or payoff) functions $g_d$.

\begin{assumption}\label{ass:Payoff}[Assumptions on $g_d$]
	There exists constants $c>0$, $q\geq 0, \alpha\geq0$ such that 
	\begin{itemize}
		\item[(i)] for all $\varepsilon \in (0,1]$, $d \in \N$, $t \in \{0,\ldots,T\}$
		there exists a neural network $\phi_{\varepsilon,d,t} \colon \R^d \to \R$ 
		with  
		\begin{align} \label{eq:NNclose}
		|g_d(t,x)-\phi_{\varepsilon,d,t}(x)| 
		& \leq \varepsilon c d^{q} (1+\|x\|^p), \quad \text{ for all } x \in \R^d,
		\\ \label{eq:NNsparse} 
		\mathrm{size}(\phi_{\varepsilon,d,t}) &\leq c d^{q} \varepsilon^{-\alpha}, 
		\\ \label{eq:NNlipschitz} 
		\mathrm{Lip}(\phi_{\varepsilon,d,t}) & \leq c d^{q},
		\end{align}
		\item[(ii)] for all $d \in \N$, $t \in \{0,\ldots,T\}$ it holds that $|g_d(t,0)|\leq c d^q$.
	\end{itemize}
\end{assumption}
Assumption~\ref{ass:Payoff}(i) means that $g_d(t,\cdot) \colon \R^d \to \R$ can be approximated well be neural networks for any $d \in \N$, $t \in \{0,\ldots,T\}$. Assumption~\ref{ass:Payoff}(ii) imposes that the ``constant part'' of the payoff grows at most polynomially in $d$. 
Lemma~\ref{lem:expFinite} below shows that the framework indeed ensures that 
$\E[|g_d(t,X_t^d)|]< \infty$, as required in Section~\ref{subsec:OptStopping}.

\begin{example}
\label{ex:payoffs}
Assumption~\ref{ass:Payoff} is satisfied in typical applications from mathematical finance. For instance, fix a strike price $K>0$, an interest rate $r\geq 0$ and consider the payoff of a max-call option $g_d(t,x) = e^{-r t}(\max_{i=1,\ldots,d} x_i - K)^+$. Then (see, e.g., \cite[Lemma~4.12]{HornungJentzen2018}) for each $t$ the payoff $g_d(t,\cdot)$ can be realized exactly by a neural network $\phi_{d,t}$ with $\mathrm{size}(\phi_{d,t}) \leq 6d^3$. In addition, $\mathrm{Lip}(g_d(t,\cdot))=1$ and 
therefore, setting $\phi_{\varepsilon,d,t} = \phi_{d,t}$ for all $\varepsilon \in (0,1]$ we get that Assumption~\ref{ass:Payoff} is satisfied with $c=6$, $\alpha=0$, $q=3$.
Further examples include basket call options, basket put options, call on min options and, by similar techniques, also put on min options, put on max options and many related payoffs.  
\end{example}

We now state the main deep neural network approximation result under the assumptions introduced above. 

\begin{theorem} \label{thm:DNNapprox}
Suppose Assumptions~\ref{ass:Markov} and \ref{ass:Payoff} are satisfied. Let $c >0$, $q \geq 0$ and assume for all $d \in \N$ that $\rho^d$ is a probability measure on $\R^d$ with $\int_{\R^d} \|x\|^{2\max(p,2)} \rho^d(dx) \leq c d^q$.

Then there exist constants $\kappa,\mathfrak{q},\mathfrak{r} \in [0,\infty)$ and 
neural networks $\psi_{\varepsilon,d,t}$, $\varepsilon \in (0,1]$, $d \in \N$, $t \in \{0,\ldots,T\}$,  
such that for any $\varepsilon \in (0,1]$,  $d \in \N$, $t \in \{0,\ldots,T\}$   
the number of neural network weights grows at most polynomially 
and the approximation error between the neural network $\psi_{\varepsilon,d,t}$ 
and the value function is at most $\varepsilon$, that is, $\mathrm{size}(\psi_{\varepsilon,d,t}) \leq \kappa d^{\mathfrak{q}}\varepsilon^{-\mathfrak{r}}$  and 
\begin{equation}
\label{eq:approxError}
\left(\int_{\R^d} |V_d(t,x) - \psi_{\varepsilon,d,t}(x)|^2 \rho^d(d x) \right)^{1/2} \leq \varepsilon.
\end{equation}
\end{theorem}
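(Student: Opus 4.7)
The proof proceeds by backward induction on the time index $t$. The base case $t=T$ is immediate: take $\psi_{\varepsilon,d,T}=\phi_{\varepsilon,d,T}$ from Assumption~\ref{ass:Payoff}(i), which directly approximates $V_d(T,\cdot)=g_d(T,\cdot)$ with the required size and error. For the inductive step at $t<T$, Remark~\ref{rmk:condExpDefined} together with the Markovian update \eqref{eq:MarkovUpdate} and the independence from Assumption~\ref{ass:Markov}(ii) yields the representation
\[
V_d(t,x) = \max\!\bigl(g_d(t,x),\, \E[V_d(t+1,f_t^d(x,Y_t^d))]\bigr),
\]
reducing the task to approximating the inner expectation by a polynomial-size network.

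The central device is Monte Carlo. For iid copies $Y_t^{d,(1)},\ldots,Y_t^{d,(M)}$ of $Y_t^d$, the empirical average $\frac{1}{M}\sum_{m=1}^M V_d(t+1,f_t^d(x,Y_t^{d,(m)}))$ has the classical dimension-free $M^{-1/2}$ rate in $L^2(\rho^d)$, provided one can control the polynomial moments of $V_d(t+1,X_{t+1}^d)$. A Fubini/averaging argument then extracts a deterministic realization $(y_t^{(m)})_{m=1}^M$ achieving the mean $L^2(\rho^d)$ error. Substituting the network $\eta_{\varepsilon',d,t}$ from Assumption~\ref{ass:Markov}(iii) for $f_t^d$ and the inductively constructed $\psi_{\varepsilon',d,t+1}$ for $V_d(t+1,\cdot)$, and using $\max(a,b)=b+\varrho(a-b)$ to realize the outer maximum, produces the candidate
\[
\psi_{\varepsilon,d,t}(x) = \max\!\left(\phi_{\varepsilon',d,t}(x),\, \frac{1}{M}\sum_{m=1}^M \psi_{\varepsilon',d,t+1}\bigl(\eta_{\varepsilon',d,t}(x,y_t^{(m)})\bigr)\right).
\]

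Size and error are tracked in parallel. Since sums, compositions, and binary maxima of ReLU networks are ReLU networks with additive size cost, $\mathrm{size}(\psi_{\varepsilon,d,t})$ is bounded by a polynomial-in-$(d,\varepsilon^{-1})$ factor times $M$ times $\mathrm{size}(\psi_{\varepsilon',d,t+1})$; unrolling across the fixed horizon $T$ preserves polynomial growth. The total $L^2(\rho^d)$ error decomposes into the Monte Carlo contribution, the substitution error from replacing $f_t^d$ by $\eta_{\varepsilon',d,t}$ (controlled via \eqref{eq:fapprox} and \eqref{eq:etaLipschitz}), the payoff-approximation error from \eqref{eq:NNclose}, and the propagated inductive error from $\psi_{\varepsilon',d,t+1}$. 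Choosing $\varepsilon'$ as a polynomially small multiple of $\varepsilon$ and $M$ as a polynomial in $d$ and $\varepsilon^{-1}$ balances the three contributions at level $\varepsilon$, yielding the bound $\mathrm{size}(\psi_{\varepsilon,d,t})\leq \kappa d^{\mathfrak{q}}\varepsilon^{-\mathfrak{r}}$.

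The main obstacle is maintaining three families of uniform polynomial-in-$d$ bounds through the backward recursion: moment bounds of the form $\E[\|X_t^d\|^{2\max(p,2)}]\leq Cd^Q$ for the Markov process (this is presumably Lemma~\ref{lem:expFinite}, obtained by iterating Assumption~\ref{ass:Markov}(iv) with the Lipschitz structure of $f_t^d$), polynomial-growth bounds for $V_d(t,\cdot)$ and its surrogates $\psi_{\varepsilon,d,t}$, and polynomial-in-$d$ Lipschitz estimates for $\psi_{\varepsilon,d,t}$. The last item is particularly delicate, since each backward step composes $\psi_{\varepsilon',d,t+1}$ with $\eta_{\varepsilon',d,t}$ and an exponential-in-$d$ Lipschitz constant would destroy the dimension-free rate. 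Once these uniform bounds are in place (handled by auxiliary lemmas deferred to Section~\ref{sec:Proofs}), the inductive step closes and the theorem follows.
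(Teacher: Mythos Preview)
Your strategy---backward induction, Monte Carlo for the conditional expectation, and assembling $\max$, sums, and compositions as ReLU networks---matches the paper's, and your error analysis via $|\max(a,b)-\max(a',b')|\leq |a-a'|+|b-b'|$ is in fact simpler than the paper's route, which introduces a shift $\phi_{\bar\varepsilon,d,t}-\delta$ and a four-region decomposition according to whether the true and approximate continuation regions agree.

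There is, however, a genuine gap. When you invoke ``the propagated inductive error from $\psi_{\varepsilon',d,t+1}$,'' what you actually need is an $L^2$ bound on $V_d(t+1,\cdot)-\psi_{\varepsilon',d,t+1}$ with respect to the \emph{pushed-forward} measure $(\rho^d\otimes\nu^d_t)\circ(f_t^d)^{-1}$, not with respect to $\rho^d$, because the error enters through $V_d(t+1,f_t^d(x,Y_t^d))-\psi_{\varepsilon',d,t+1}(f_t^d(x,Y_t^d))$. Your induction hypothesis, as stated for the fixed measure $\rho^d$, does not supply this, and the two measures are genuinely different. The paper closes the gap by proving a stronger statement (Step~2 of the proof): for each $t$ the constants $\kappa_t,\mathfrak{q}_t,\mathfrak{r}_t$ depend only on a moment bound $\int\|x\|^{2\max(p,2)}\rho^d_t(dx)\leq c_t d^{q_t}$, not on the measure $\rho^d_t$ itself, and one verifies via the linear growth of $f_t^d$ (Lemma~\ref{lem:growth}) that the pushforward satisfies such a bound with explicit new constants $c_{t+1},q_{t+1}$. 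A secondary point you gloss over: your averaging argument to extract a deterministic realization $(y_t^{(m)})$ must be upgraded to one that \emph{simultaneously} achieves small $L^2$ error and bounds $\max_m\|y_t^{(m)}\|$ polynomially in $d,\varepsilon^{-1}$, since the latter is what drives the growth bound $|\psi_{\varepsilon,d,t}(x)|\lesssim (1+\|x\|)$ that feeds the Monte Carlo variance at the next backward step; the paper does this with a two-event Markov-inequality argument (Lemma~\ref{lem:jointProb}).
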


The proof of Theorem~\ref{thm:DNNapprox} will be given in Section~\ref{subsec:DNNapproxProof} below. 

Theorem~\ref{thm:DNNapprox} shows that under Assumptions~\ref{ass:Markov} and \ref{ass:Payoff} the value function $V_d$ can be approximated by deep neural networks without the curse of dimensionality: an approximation error at most $\varepsilon$ can be achieved by a deep neural network whose size is at most polynomial in $\varepsilon^{-1}$ and $d$. The approximation error in Theorem~\ref{thm:DNNapprox} is measured in the $L^2(\rho^d)$-norm.

Theorem~\ref{thm:DNNapprox} can also be used to deduce further properties of $V_d$.
In the basic framework we obtain for instance the following corollary, which shows that under Assumptions~\ref{ass:Markov} and ~\ref{ass:Payoff} for each $t$ the value function satisfies a certain average Lipschitz property with constant growing at most polynomially in $d$. 
\begin{corollary}
	\label{cor:Lipschitz}
Suppose Assumptions~\ref{ass:Markov} and \ref{ass:Payoff} are satisfied. Let $\nu_0^d$ be the standard Gaussian measure on $\R^d$. 
Then for any $R>0$ there exist constants $\kappa,\mathfrak{q} \in [0,\infty)$ 
such that for any $d \in \N$, $t \in \{0,\ldots,T\}$, $h \in [-R,R]^d$
the value function satisfies
\begin{equation}
\left(\int_{\R^d} |V_d(t,x) - V_d(t,x+h)|^2 \nu_0^d(d x) \right)^{1/2}
\leq \|h\| \kappa d^{\mathfrak{q}}.
\end{equation}
\end{corollary}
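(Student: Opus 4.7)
The plan is to establish the pointwise Lipschitz estimate $|V_d(t,x+h)-V_d(t,x)|\leq L_t\,\|h\|$ with $L_t:=(cd^q)^{T+1-t}$ by backward induction on $t$; the $L^2(\nu_0^d)$ bound will then be immediate from integration (and, as a byproduct, the restriction $h\in[-R,R]^d$ is not actually needed for this approach).

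\emph{Step 1: Lipschitz regularity of the data.} I first observe that $g_d(t,\cdot)$ and $f_t^d$ are themselves Lipschitz. For any $x,y\in\R^d$ and $\varepsilon\in(0,1]$, two applications of the triangle inequality with the network $\phi_{\varepsilon,d,t}$ from Assumption~\ref{ass:Payoff}(i) give
\[
|g_d(t,x)-g_d(t,y)|\leq 2\varepsilon c d^q(1+\|x\|^p+\|y\|^p)+cd^q\|x-y\|,
\]
and since the Lipschitz constant $cd^q$ of $\phi_{\varepsilon,d,t}$ is independent of $\varepsilon$, sending $\varepsilon\to 0^+$ yields $|g_d(t,x)-g_d(t,y)|\leq cd^q\|x-y\|$. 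The same argument applied to $\eta_{\varepsilon,d,t}$ using Assumption~\ref{ass:Markov}(iii) shows that $f_t^d$ is jointly $cd^q$-Lipschitz; in particular $f_t^d(\cdot,y)$ is $cd^q$-Lipschitz uniformly in $y$.

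\emph{Step 2: Induction on $V_d$.} WLOG $cd^q\geq 1$ (otherwise replace $c$ by $\max(c,1)$). The base case $V_d(T,\cdot)=g_d(T,\cdot)$ is covered by Step~1. For the inductive step, Remark~\ref{rmk:condExpDefined} together with the linear growth of $V_d(t+1,\cdot)$ (inherited from the induction hypothesis), the bound $\|f_t^d(0,0)\|\leq cd^q$, and the moment condition on $Y_t^d$ from Assumption~\ref{ass:Markov}(iv) guarantees that $C_t(x):=\E[V_d(t+1,f_t^d(x,Y_t^d))]$ is well-defined for every $x\in\R^d$. Dominated convergence together with the Lipschitz property of $f_t^d$ in its first argument yields
\[
|C_t(x)-C_t(x')|\leq L_{t+1}\,\E\bigl[\|f_t^d(x,Y_t^d)-f_t^d(x',Y_t^d)\|\bigr]\leq L_{t+1}cd^q\,\|x-x'\|.
\]
Combining with the elementary inequality $|\max(a_1,b_1)-\max(a_2,b_2)|\leq\max(|a_1-a_2|,|b_1-b_2|)$, the recursion~\eqref{eq:recursionValueFunction} gives $|V_d(t,x)-V_d(t,x')|\leq cd^q\max(1,L_{t+1})\|x-x'\|=L_t\,\|x-x'\|$, closing the induction.

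\emph{Conclusion.} Since the pointwise bound $|V_d(t,x)-V_d(t,x+h)|\leq L_t\|h\|\leq (cd^q)^{T+1}\|h\|$ holds for all $x,h\in\R^d$, integrating against $\nu_0^d$ and taking square roots yields the claim with $\kappa:=c^{T+1}$ and $\mathfrak{q}:=q(T+1)$. The only delicate point in the argument is the pointwise (not merely almost-everywhere) definedness of $C_t$ in the inductive step; this is settled cleanly by the linear growth of the Lipschitz function $V_d(t+1,\cdot)$ combined with the moment hypothesis on $Y_t^d$.
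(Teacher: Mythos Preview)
Your proof is correct and takes a genuinely different, more direct route than the paper. The paper derives the corollary as an application of Theorem~\ref{thm:DNNapprox}: for fixed $h$ it builds the reference measure $\rho^d=\tfrac{1}{2}\nu_0^d+\tfrac{1}{2}\nu_h^d$, checks the moment condition, invokes the theorem to produce networks $\psi_{\varepsilon,d,t}$ with Lipschitz constant $\kappa_t d^{\mathfrak{q}_t}$ (from the stronger statement in Step~2 of the proof), and then sandwiches $\|V_d(t,\cdot)-V_d(t,\cdot+h)\|_{L^2(\nu_0^d)}$ between two approximation errors and the network increment, letting $\varepsilon\to 0$. Your argument instead extracts pointwise Lipschitz regularity of $g_d(t,\cdot)$ and $f_t^d$ directly from the $\varepsilon$-independent Lipschitz bounds \eqref{eq:NNlipschitz} and \eqref{eq:etaLipschitz} on their approximating networks, and then propagates this through the dynamic programming recursion by backward induction. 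The result you obtain is strictly stronger---a global pointwise Lipschitz bound for $V_d(t,\cdot)$ with explicit constant $(cd^q)^{T+1-t}$ and no restriction $h\in[-R,R]^d$---and the argument is entirely elementary. The paper's route, by contrast, illustrates that such regularity can be read off from neural-network approximability via the main theorem, which is conceptually interesting but less efficient for this particular statement.
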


The proof of Corollary~\ref{cor:Lipschitz} will be given at the end of Section~\ref{subsec:DNNapproxProof}.

\subsection{Refined framework}
\label{subsec:refined}
We now introduce a refined framework, in which the approximation hypothesis \eqref{eq:fapprox} and the Lipschitz condition \eqref{eq:etaLipschitz} in Assumption~\ref{ass:Markov}(iii) are weakened, see \eqref{eq:fapproxBdd} and \eqref{eq:etaLipschitzBdd} below. 
Due to these weaker hypotheses we need to introduce potentially stronger moment assumptions on the noise variables $Y^d_t$. Note that the additional growth conditions \eqref{eq:fGrowthbdd} and \eqref{eq:etaGrowthbdd} are satisfied automatically under Assumption~\ref{ass:Markov} (see Lemma~\ref{lem:growth} and Remark~\ref{remark:growth} below).

\begin{assumptionp}{1'}\label{ass:Markov2}[Weaker assumptions on $X^d$]
Assume that (i), (ii) and (iv) in Assumption~\ref{ass:Markov} are satisfied. Furthermore, assume that 
there exist constants $c>0$, $h >0$, $q,\bar{q} \geq 0$, $\alpha \geq 0$, $\beta>0$, $m \in \N$, $\theta \geq 0$ and $\zeta \geq 0$  such that 
 for all $\varepsilon \in (0,1]$, $d \in \N$, $t \in \{0,\ldots,T-1\}$
		there exists a neural network $\eta_{\varepsilon,d,t} \colon \R^d \times \R^d \to \R^d$ with 
		\begin{align}
		\label{eq:fapproxBdd}
		\| f^d_{t}(x,y) - \eta_{\varepsilon,d,t}(x,y)\| & \leq \varepsilon c d^{q} (1+\|x\|^p + \|y\|^p), \quad \text{ for all } x,y \in [-(\varepsilon^{-\beta}),\varepsilon^{-\beta}]^d, 
		\\ \label{eq:etasparseBdd} 
		\mathrm{size}(\eta_{\varepsilon,d,t}) &\leq c d^{q} \varepsilon^{-\alpha}, 
		\\ \label{eq:etaLipschitzBdd}
		\mathrm{Lip}(\eta_{\varepsilon,d,t}) & \leq c d^{q} \varepsilon^{-\zeta}
		\end{align}
	and for all $x,y \in \R^d$
	\begin{align}
	\label{eq:fGrowthbdd}
	\E[\|f_t^d(x,Y_t^d)\|^{2m\max(p,2)}] & \leq  h d^{\bar{q}}  (1+\|x\|^{2m\max(p,2)}),
	\\ \label{eq:etaGrowthbdd}
	\|\eta_{\varepsilon,d,t}(x,y)\| & \leq   c d^{q} \varepsilon^{-\theta} (2+\|x\|+\|y\|)
	\end{align}
and $\E[\|Y_{t}^d\|^{2m\max{(2,p)}}] \leq c d^{q}$.
\end{assumptionp}

\begin{remark}\label{remark:growth}
A sufficient condition for \eqref{eq:fGrowthbdd} is that there exist $\tilde{c}>0$ and $\tilde{q}\geq 0$ such that for all $d\in \N$, $x,y \in \R^d$ we have $\E[\|Y_{t}^d\|^{2m\max{(2,p)}}] \leq \tilde{c} d^{\tilde{q}}$ and 
$\|f_t^d(x,y)\| \leq  \tilde{c} d^{\tilde{q}}  (1+\|x\|+\|y\|)$. Then
\[\begin{aligned}
\E[\|f_t^d(x,Y_t^d)\|^{2m\max(p,2)}] & \leq (\tilde{c} d^{\tilde{q}})^{2m\max(p,2)} \E[(1+\|x\|+\|Y_t^d\|)^{2m\max(p,2)}]   
\\ & \leq  (3\tilde{c} d^{\tilde{q}})^{2m\max(p,2)}  (1+\|x\|^{2m\max(p,2)}+\tilde{c} d^{\tilde{q}}).
\end{aligned}
\]
\end{remark}

\begin{remark}
While in many relevant applications the number of time steps $T$ is only moderate (e.g.\ around $10$ in \cite[Sections~4.1--4.2]{Becker2019}), it is also important to analyse the situation when $T$ is large. To this end, in Assumption~\ref{ass:Markov2} we have introduced the constants $h$ and $\bar{q}$ instead of using the common upper bounds $c$, $q$. This makes it possible to get first insights about the situation in which $T$ is large from the proofs in Section~\ref{sec:Proofs}. Indeed, if $h=1+\tilde{h}$ and $\tilde{h}$ is sufficiently small (as it is the case for instance in certain discretized diffusion models), then the constants in Lemma~\ref{lem:growthMoments} and Lemma~\ref{lem:Vgrowth} are small also for large $T$.
\end{remark}

Examples of processes that satisfy Assumption~\ref{ass:Markov2} are provided further below. These include, in particular, the Black-Scholes model, more general exponential L\'evy processes and discrete diffusions.

We now state the main theorem of the article. 

\begin{theorem}\label{thm:refined}
Suppose Assumptions~\ref{ass:Markov2} and \ref{ass:Payoff} are satisfied. Let $c >0$, $q \geq 0$ and assume for all $d \in \N$ that $\rho^d$ is a probability measure on $\R^d$ with $\int_{\R^d} \|x\|^{2m\max(p,2)} \rho^d(dx) \leq c d^q$. Furthermore, assume that $\zeta < \frac{\min(1,\beta m-\theta)}{T-1}$, where $m,\beta,\zeta,\theta$ are the constants appearing in Assumption~\ref{ass:Markov2}.

Then there exist constants $\kappa,\mathfrak{q},\mathfrak{r} \in [0,\infty)$ and 
neural networks $\psi_{\varepsilon,d,t}$, $\varepsilon \in (0,1]$, $d \in \N$, $t \in \{0,\ldots,T\}$, 
such that for any $\varepsilon \in (0,1]$,  $d \in \N$, $t \in \{0,\ldots,T\}$   
the number of neural network weights grows at most polynomially 
and the approximation error between the neural network $\psi_{\varepsilon,d,t}$ 
and the value function is at most $\varepsilon$, that is, $\mathrm{size}(\psi_{\varepsilon,d,t}) \leq \kappa d^{\mathfrak{q}}\varepsilon^{-\mathfrak{r}}$  and
\begin{equation}
\label{eq:approxError2}
\left(\int_{\R^d} |V_d(t,x) - \psi_{\varepsilon,d,t}(x)|^2 \rho^d(d x) \right)^{1/2} \leq \varepsilon.
\end{equation}
\end{theorem}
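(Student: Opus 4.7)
The strategy mirrors the proof of Theorem~\ref{thm:DNNapprox}: a backward induction on $t$, starting from $V_d(T,\cdot)=g_d(T,\cdot)$, which at each step plugs the previous network approximation of $V_d(t+1,\cdot)$ into the dynamic programming recursion~\eqref{eq:recursionValueFunction}, approximates the conditional expectation by a derandomised Monte Carlo average of $N$ copies of that network composed with $\eta_{\varepsilon',d,t}$, and applies the max with $\phi_{\varepsilon',d,t}$, which is realised as $\max(a,b)=b+\varrho(a-b)$. The network sizes recurse exactly as in Theorem~\ref{thm:DNNapprox}; the new work lies entirely in the error analysis, which must now accommodate the fact that \eqref{eq:fapproxBdd} only holds on $[-\varepsilon^{-\beta},\varepsilon^{-\beta}]^{2d}$ and that $\mathrm{Lip}(\eta_{\varepsilon,d,t})$ may blow up like $\varepsilon^{-\zeta}$.

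First I would establish \emph{a priori} moment bounds by inducting on $t$ using \eqref{eq:fGrowthbdd}: the $2m\max(p,2)$-moment of $X_t^d$ and, more importantly, of the push-forward of $\rho^d$ under $t$ steps of the dynamics remains polynomial in $d$. This is needed so that the inductive $L^2$-bound on $V_d(t+1,\cdot)-\psi_{\varepsilon',d,t+1}$ can be invoked at the correct measure, namely the image of $\rho^d$ under $x\mapsto f_t^d(x,Y_t^d)$. The growth of $\psi_{\varepsilon',d,t+1}$ itself is tracked through the induction using \eqref{eq:etaGrowthbdd} and the Lipschitz property.

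Second, I would decompose the one-step $L^2(\rho^d)$-error by $|\max(a,b)-\max(a',b')|\le |a-a'|+|b-b'|$ into four pieces: (i)~the payoff approximation $\phi_{\varepsilon',d,t}\approx g_d(t,\cdot)$, directly controlled via Assumption~\ref{ass:Payoff}(i); (ii)~the Monte Carlo error, which in $L^2$ is $\lesssim N^{-1/2}$ times the standard deviation of $\psi_{\varepsilon',d,t+1}\circ\eta_{\varepsilon',d,t}$, itself polynomial in $d$; (iii)~the dynamics error $\psi_{\varepsilon',d,t+1}(\eta_{\varepsilon',d,t}(\cdot,Y))-\psi_{\varepsilon',d,t+1}(f_t^d(\cdot,Y))$, bounded by $\mathrm{Lip}(\psi_{\varepsilon',d,t+1})\cdot\|\eta_{\varepsilon',d,t}-f_t^d\|$; and (iv)~the pull-back of the inductive error. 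For (iii), I split the domain into the cube $[-\varepsilon'^{-\beta},\varepsilon'^{-\beta}]^{2d}$, where \eqref{eq:fapproxBdd} gives $\varepsilon' c d^q$ times polynomial moment factors, and its complement, where a Markov estimate using the $2m\max(p,2)$-moment yields a probability factor $\lesssim\varepsilon'^{2\beta m\max(p,2)}$; combined with the $\varepsilon'^{-\theta}$-growth of $\eta_{\varepsilon',d,t}$ via \eqref{eq:etaGrowthbdd}, this gives a tail contribution of order $\varepsilon'^{\beta m-\theta}$.

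The main obstacle, and the reason for the condition $\zeta<\frac{\min(1,\beta m-\theta)}{T-1}$, is tracking the Lipschitz blow-up through the backward iteration. Each step multiplies $\mathrm{Lip}(\psi_{\varepsilon',d,\cdot})$ by at most $\mathrm{Lip}(\eta_{\varepsilon',d,t})\le c d^q\varepsilon'^{-\zeta}$, so at time $t=0$ the factor in front of term~(iii) carries a $\varepsilon'^{-\zeta(T-1)}$. The hypothesis guarantees that the product $\varepsilon'^{-\zeta(T-1)}\cdot\varepsilon'^{\min(1,\beta m-\theta)}$ is a positive power of $\varepsilon'$, so the accumulated error still decays in $\varepsilon'$. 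Choosing $\varepsilon'=\varepsilon\, d^{-Q}$ for sufficiently large $Q$ and $N$ polynomial in $(d,\varepsilon^{-1})$, one concludes by a finite backward recursion that the total error at $t=0$ is at most $\varepsilon$ while keeping every network size at most $\kappa d^{\mathfrak{q}}\varepsilon^{-\mathfrak{r}}$, yielding \eqref{eq:approxError2}.
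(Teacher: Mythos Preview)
Your proposal is correct and follows the paper's overall architecture---backward induction, Monte Carlo derandomisation, composition with $\eta_{\varepsilon',d,t}$, tracking of moments of the pushforward measure $\rho^d_{t+1}=(\rho^d_t\otimes\nu^d_t)\circ(f^d_t)^{-1}$, tail truncation via Markov's inequality on the $2m\max(p,2)$-moment, and the accumulated Lipschitz factor $\varepsilon'^{-\zeta(T-t)}$ leading precisely to the condition $\zeta<\frac{\min(1,\beta m-\theta)}{T-1}$. The one genuine divergence is your treatment of the maximum: you bound $|\max(a,b)-\max(a',b')|\le|a-a'|+|b-b'|$ directly, whereas the paper introduces a shift $\delta=\bar\varepsilon^{\frac{1}{2}(\min(1,\beta m-\theta)-\zeta(T-1))}$, defines $\hat v_{\bar\varepsilon,d,t}=\max(\phi_{\bar\varepsilon,d,t}-\delta,\gamma_{\bar\varepsilon,d,t})$, and decomposes the error over the four regions $C_t^{(\cdot)}\cap\hat C_t^{(\cdot)}$ determined by the true and approximate continuation sets; on the mixed regions it then bounds $\rho^d_t(C_t\cap\hat C_t^c)$ and $\rho^d_t(C_t^c\cap\hat C_t)$ via Markov's inequality. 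Your route is more elementary and yields a sharper dependence on $\bar\varepsilon$ (the paper loses an extra factor $\frac{1}{4}$ in the exponent through this detour), while the paper's decomposition makes the approximate exercise boundary explicit, which is of independent interest for stopping-rule-based algorithms. One minor point: your final calibration $\varepsilon'=\varepsilon\,d^{-Q}$ should really be $\varepsilon'=(\varepsilon\,d^{-Q})^{1/(\min(1,\beta m-\theta)-\zeta(T-1))}$ to absorb the residual exponent; with this correction the size bound goes through exactly as you claim.
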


The proof of Theorem~\ref{thm:refined} will be given in Section~\ref{subsec:DNNapproxProofrefined} below. 

Theorem~\ref{thm:refined} shows that for Markov processes satisfying Assumption~\ref{ass:Markov2} and for reward functions satisfying Assumption~\ref{ass:Payoff} the value function of the associated optimal stopping problem can be approximated by deep neural networks without the curse of dimensionality. In other words, an approximation error at most $\varepsilon$ can be achieved by a deep neural network whose size is at most polynomial in $\varepsilon^{-1}$ and $d$.

The condition $\zeta < \frac{\min(1,\beta m-\theta)}{T-1}$ in Theorem~\ref{thm:refined} can be viewed as a condition on $m$, which needs to be sufficiently large. This means that sufficiently high moments of $Y^d_t$ need to exist and grow only polynomially in $d$. 

A key step in the proof consists in constructing a deep neural network approximating the continuation value. Therefore, we immediately obtain the following corollary. 

\begin{corollary}\label{cor:continuation}
Consider the setting of Theorem~\ref{thm:refined}. Then for each $\varepsilon \in (0,1]$, $d \in \N$, $t \in \{0,\ldots,T\}$   there exists a neural network $\gamma_{\varepsilon,d,t}$
such that 	$\mathrm{size}(\gamma_{\varepsilon,d,t}) \leq \kappa d^{\mathfrak{q}}\varepsilon^{-\mathfrak{r}}$ 
and 	
\begin{equation}
	\label{eq:approxError2Cor}
	\left(\int_{\R^d} |\E[V_d(t+1,X_{t+1}^d)|X_t^d=x]  - \gamma_{\varepsilon,d,t}(x)|^2 \rho^d(d x) \right)^{1/2} \leq \varepsilon.
	\end{equation}
\end{corollary}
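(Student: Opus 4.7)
The plan is to recognise that the corollary follows essentially for free from the construction carried out in the proof of Theorem~\ref{thm:refined}. By the Bellman recursion \eqref{eq:recursionValueFunction}, the value function decomposes as $V_d(t,x) = \max(g_d(t,x), C_d(t,x))$, where $C_d(t,x) := \E[V_d(t+1,X_{t+1}^d)\mid X_t^d=x]$ is the continuation value. The natural inductive construction of the approximants in Theorem~\ref{thm:refined} sets $\psi_{\varepsilon,d,t}(x) = \max(\phi_{\varepsilon,d,t}(x), \gamma_{\varepsilon,d,t}(x))$ for some neural network $\gamma_{\varepsilon,d,t}$ approximating the continuation value, with $\phi_{\varepsilon,d,t}$ the approximant of $g_d(t,\cdot)$ from Assumption~\ref{ass:Payoff}. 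Since $\max(a,b) = \varrho(a-b)+b$ is realised exactly by a fixed-size ReLU subnetwork, the sizes of $\gamma_{\varepsilon,d,t}$ and $\psi_{\varepsilon,d,t}$ differ only by an additive constant, so the polynomial size bound from Theorem~\ref{thm:refined} carries over (possibly with updated constants $\kappa,\mathfrak{q},\mathfrak{r}$).

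Concretely, I would construct $\gamma_{\varepsilon,d,t}$ from the identity $C_d(t,x) = \E[V_d(t+1, f_t^d(x, Y_t^d))]$, which follows from Assumption~\ref{ass:Markov}(i)--(ii). I would first invoke Theorem~\ref{thm:refined} at time $t+1$ with a tolerance $\tilde\varepsilon$ depending polynomially on $\varepsilon$ and $d^{-1}$ to obtain $\psi_{\tilde\varepsilon,d,t+1}$, then replace $f_t^d$ by its neural network approximant $\eta_{\tilde\varepsilon,d,t}$ from Assumption~\ref{ass:Markov2}, and finally approximate the expectation over $Y_t^d$ by an empirical average
\[
\gamma_{\varepsilon,d,t}(x) := \frac{1}{N}\sum_{i=1}^{N} \psi_{\tilde\varepsilon,d,t+1}\bigl(\eta_{\tilde\varepsilon,d,t}(x, y_i)\bigr),
\]
with $N$ polynomial in $\varepsilon^{-1}$ and $d$, and a suitable realisation of the samples $y_i$ chosen so that the desired $L^2$ error bound actually holds. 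Since compositions and affine combinations of neural networks can themselves be written as a neural network with size bounded by the sum of the component sizes (up to constants), $\gamma_{\varepsilon,d,t}$ then has size polynomial in $\varepsilon^{-1}$ and $d$.

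The main technical obstacle is to control the $L^2(\rho^d)$ error of the composition step, which requires three estimates, all already used in Theorem~\ref{thm:refined}. First, one controls the $L^2$ error of $\psi_{\tilde\varepsilon,d,t+1} - V_d(t+1,\cdot)$ under the pushforward measure $\rho^d \circ (f_t^d(\cdot, Y_t^d))^{-1}$, using the growth bound \eqref{eq:fGrowthbdd} together with the moment assumption on $\rho^d$. Second, one bounds the sensitivity of $\psi_{\tilde\varepsilon,d,t+1}$ to the substitution $f_t^d \mapsto \eta_{\tilde\varepsilon,d,t}$, combining the approximation bound \eqref{eq:fapproxBdd} on a truncation set of scale $\tilde\varepsilon^{-\beta}$ with the Lipschitz and growth bounds \eqref{eq:etaLipschitzBdd}, \eqref{eq:etaGrowthbdd}, and controlling the complementary set by the polynomial moment bounds on $Y_t^d$ and $\rho^d$. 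Third, one bounds the Monte Carlo fluctuation of the empirical average via a variance estimate that again relies on the polynomial moments of $Y_t^d$ and the growth of $\psi_{\tilde\varepsilon,d,t+1}\circ\eta_{\tilde\varepsilon,d,t}$. The balance between these error contributions is exactly what forces the condition $\zeta < \tfrac{\min(1,\beta m - \theta)}{T-1}$ in Theorem~\ref{thm:refined}, and identifying $\gamma_{\varepsilon,d,t}$ with the continuation-value subnetwork already produced in that inductive argument yields the corollary after relabelling the polynomial size constants.
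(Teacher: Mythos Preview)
Your proposal is correct and follows essentially the same approach as the paper: the corollary is obtained by extracting the continuation-value network $\gamma_{\bar\varepsilon,d,t}$ constructed in Step~6 of the proof of Theorem~\ref{thm:refined} (see \eqref{eq:auxEq50} and \eqref{eq:sizegamma}), together with the rescaling of $\bar\varepsilon$ in terms of $\varepsilon$ and $d$. One minor inaccuracy: the paper's approximating network is $\hat v_{\bar\varepsilon,d,t}(x)=\max(\phi_{\bar\varepsilon,d,t}(x)-\delta,\gamma_{\bar\varepsilon,d,t}(x))$ with a shift $\delta$, not $\max(\phi_{\varepsilon,d,t}(x),\gamma_{\varepsilon,d,t}(x))$, but this does not affect your argument for the corollary.
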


\subsection{Exponential L\'evy models}
\label{subsec:expLevy}
In this subsection we apply Theorem~\ref{thm:refined} to exponential L\'evy models. 
Recall that an $\R^d$-valued stochastic process $L^d = (L^d_t)_{t \geq 0}$ is called a ($d$-dimensional) L\'evy process if it is stochastically continuous, its sample paths are almost surely right continuous with left limits, it has stationary and independent increments and $\P(L^d_0=0)=1$. A L\'evy process $L^d$ is fully characterized by its  L\'evy  triplet 
$(A^d,\gamma^d, \nu^d)$ where $A^d \in \R^{d\times d}$ is a symmetric, nonnegative definite matrix, $\gamma^d \in \R^d$
and $\nu^d$ is a measure on $\R^d$ describing the jump structure of $L^d$.  We refer, e.g., to \cite{Sato1999}, \cite{Applebaum2009} for more detailed statements of these definitions, proofs of this characterization and further details on L\'evy processes.

A stochastic process $X^d$ is said to follow an exponential L\'evy model, if 
\begin{equation} \label{eq:expLevy}
X^d_t
= 
\big(x^d_1\exp(L_{t,1}^d),\ldots, x^d_d\exp(L_{t,d}^d)\big), 
\quad t \in \{0,\ldots,T\}
\end{equation}
for a $d$-dimensional L\'evy process $L^d = (L^d_t)_{t \geq 0}$ and $x^d \in \R^d$.  
We refer to \cite{Cont2004}, \cite{EberKall19} for more details on financial modelling using exponential L\'evy models.

From Theorem~\ref{thm:refined} we now obtain the following deep neural network approximation result. This result includes the case of a Black-Scholes model ($\nu^d=0$) as well as pure jump models ($A^d_{i,j} = 0$) with sufficiently integrable tails. In particular, Corollary~\ref{cor:expLevy} applies to prices of American / Bermudan basket put options, put on min or put on max options in such models (cf. Example~\ref{ex:payoffs} for payoffs that satisfy Assumption~\ref{ass:Payoff}). 

\begin{corollary}
\label{cor:expLevy}
Let $X^d$ follow an exponential L\'evy model with L\'evy triplet $(A^d,\gamma^d,\nu ^d)$ and assume the triplets are bounded in the dimension, that is, 
there exists $B > 0$ such that 
for any $d \in \N$, $i,j=1,\ldots,d$
\begin{equation}
\label{eq:LevyTripletBounded2}
\max
\left(
A^d_{i,j},
\gamma^d_i ,
\int_{\{\|z\|> 1\}} e^{2(T+1)\max(p,2) z_i} \nu^d(d z),
\int_{\{\|z\|\leq 1\}} z_i^2 \nu^d(d z)
\right) \leq B.
\end{equation} 
Suppose the payoff functions $g_d$ satisfy Assumption~\ref{ass:Payoff}. Let $c >0$, $q \geq 0$ and assume for all $d \in \N$ that $\rho^d$ is a probability measure on $\R^d$ with $\int_{\R^d} \|x\|^{2(T+1)\max(p,2)} \rho^d(dx) \leq c d^q$.  
		
	Then there exist constants $\kappa,\mathfrak{q},\mathfrak{r} \in [0,\infty)$ and 
	neural networks $\psi_{\varepsilon,d,t}$, $\varepsilon \in (0,1]$, $d \in \N$, $t \in \{0,\ldots,T\}$,  
	such that for any $\varepsilon \in (0,1]$,  $d \in \N$, $t \in \{0,\ldots,T\}$ 
	\begin{equation}
	\label{eq:approxErrorCor}
	\mathrm{size}(\psi_{\varepsilon,d,t}) \leq \kappa d^{\mathfrak{q}}\varepsilon^{-\mathfrak{r}} \quad \text{ and } \quad 
	\left(\int_{\R^d} |V_d(t,x) - \psi_{\varepsilon,d,t}(x)|^2 \rho^d(d x) \right)^{1/2} \leq \varepsilon.
	\end{equation}
\end{corollary}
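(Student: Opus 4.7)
The plan is to verify the refined Assumption~\ref{ass:Markov2} for an exponential L\'evy model and then invoke Theorem~\ref{thm:refined}. Set $Y^d_t := L^d_{t+1} - L^d_t$ for $t = 0,\ldots,T-1$. By the stationary and independent increments of $L^d$, the family $(Y^d_t)_{t=0}^{T-1}$ is i.i.d.\ with the law of $L^d_1$ and independent of the deterministic initial value $X^d_0 = x^d$. The componentwise update
\[
f^d_t(x,y) := \bigl(x_1 e^{y_1},\ldots, x_d e^{y_d}\bigr)
\]
then satisfies $X^d_{t+1} = f^d_t(X^d_t, Y^d_t)$ together with $\|f^d_t(0,0)\| = 0$, so items (i), (ii) and the easy parts of (iv) of Assumption~\ref{ass:Markov} are immediate.

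The moment requirements on $Y^d_t$ and the multiplicative growth bound \eqref{eq:fGrowthbdd} come from the L\'evy--Khintchine formula applied to the bounded triplet. Condition \eqref{eq:LevyTripletBounded2} implies that $\E[e^{\lambda L^d_{1,i}}]$ is bounded uniformly in $i$ and polynomially in $d$ for $|\lambda| \leq 2(T+1)\max(p,2)$, which dominates every polynomial moment of $Y^d_t$ up to order $2m\max(2,p)$ with $m := T+1$. The same exponential moments yield $\E[\|f^d_t(x,Y^d_t)\|^{2m\max(p,2)}] \leq h d^{\bar q}(1+\|x\|^{2m\max(p,2)})$ via the componentwise bound $|x_i e^{Y^d_{t,i}}|^{k} \leq |x_i|^{k}\, e^{k Y^d_{t,i}}$ combined with $\max_i e^{k Y^d_{t,i}} \leq \sum_i e^{k Y^d_{t,i}}$; this matches the exponent required by the integrability assumption on $\rho^d$.

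The heart of the proof is the neural network approximation of $f^d_t$. Since the map acts componentwise, it suffices to approximate $(a,b) \mapsto a e^b$ on $[-\varepsilon^{-\beta},\varepsilon^{-\beta}]^2$ and place $d$ parallel copies. Combining a standard ReLU approximation of $e^b$ on a bounded interval, composed with a clip at height $\varepsilon^{-\theta}$, with a standard ReLU multiplication gadget yields $\eta_{\varepsilon,d,t}$ of size $O(d^q \varepsilon^{-\alpha})$, Lipschitz constant $O(d^q \varepsilon^{-\zeta})$ and global growth $O(d^q \varepsilon^{-\theta}(2+\|x\|+\|y\|))$, while the pointwise error on the approximation box fulfils \eqref{eq:fapproxBdd}. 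The parameters are then chosen so that $\beta$ is small, $\theta$ is of the same order, and $m = T+1$ produces $\zeta < \min(1,\beta m - \theta)/(T-1)$; Theorem~\ref{thm:refined} then delivers \eqref{eq:approxErrorCor}.

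The main obstacle is reconciling the four conditions \eqref{eq:fapproxBdd}--\eqref{eq:etaGrowthbdd} simultaneously. The true function $e^b$ has Lipschitz constant $e^{\varepsilon^{-\beta}}$ on the approximation box, which is super-polynomial in $\varepsilon^{-1}$, so without clipping neither \eqref{eq:etaLipschitzBdd} nor \eqref{eq:etaGrowthbdd} can be met. Clipping at $\varepsilon^{-\theta}$ restores polynomial Lipschitz and linear global growth; the resulting pointwise error is large only on the set $\{b : e^b > \varepsilon^{-\theta}\}$, whose probability under $\P\circ (X^d_t)^{-1}$ decays as a large power of $\varepsilon$ by the exponential moment bound. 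This is exactly why the exponent $2(T+1)\max(p,2)$ in~\eqref{eq:LevyTripletBounded2} is tuned to force $m = T+1$, and why the constraint on $\zeta$ in Theorem~\ref{thm:refined} propagates as a joint condition on $\beta$, $\theta$ and $m$ which can be satisfied by choosing $\beta$ small enough relative to $m$.
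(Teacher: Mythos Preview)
Your overall strategy---verify Assumption~\ref{ass:Markov2} and apply Theorem~\ref{thm:refined}---is the right one, but your choice of noise variable makes the verification fail. You set $Y^d_t = L^d_{t+1}-L^d_t$ and $f^d_t(x,y)=(x_i e^{y_i})_i$, which forces you to approximate the exponential on $[-\varepsilon^{-\beta},\varepsilon^{-\beta}]$. You acknowledge that the Lipschitz constant of $e^b$ on that box is super-polynomial and propose clipping at level $\varepsilon^{-\theta}$, arguing that the residual error is concentrated on a set of small probability. This is a genuine gap: condition~\eqref{eq:fapproxBdd} in Assumption~\ref{ass:Markov2} is a \emph{pointwise} inequality required for every $x,y$ in the box $[-\varepsilon^{-\beta},\varepsilon^{-\beta}]^d$, not an average or high-probability statement. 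After clipping, for $y_i$ with $e^{y_i}>\varepsilon^{-\theta}$ (which lies well inside the box whenever $\theta\log(\varepsilon^{-1})<\varepsilon^{-\beta}$) the error $|x_i e^{y_i}-\eta_{\varepsilon,d,t,i}(x,y)|$ is of order $|x_i|e^{y_i}$, which is not bounded by $\varepsilon c d^q(1+\|x\|^p+\|y\|^p)$. The probabilistic truncation is already handled inside the proof of Theorem~\ref{thm:refined}; you cannot invoke it a second time at the level of verifying the assumption.

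The paper avoids this entirely by a different and much simpler choice: it sets $Y^d_{t,i}=\exp(L^d_{t+1,i}-L^d_{t,i})$ and $f^d_t(x,y)=(x_1 y_1,\ldots,x_d y_d)$. Then $f^d_t$ is the componentwise product, and Lemma~\ref{lem:productrefined} gives a ReLU network $\mathfrak{n}_{\varepsilon,M}$ approximating $(a,b)\mapsto ab$ on $[-M,M]^2$ with $M=\varepsilon^{-\beta}$, error $\varepsilon$, size $O(\log(\varepsilon^{-1})+\log M)$ and Lipschitz constant $Mc$. Parallelising over $d$ coordinates yields $\eta_{\varepsilon,d,t}$ satisfying \eqref{eq:fapproxBdd}--\eqref{eq:etaGrowthbdd} with $\zeta=\theta=\beta$ and $\beta$ arbitrary. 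The moment condition on $Y^d_t$ now becomes a bound on $\E[e^{2m\max(2,p)L^d_{1,i}}]$, which is exactly what \eqref{eq:LevyTripletBounded2} provides with $m=T+1$. Choosing $\beta=\zeta=\theta=1/T$ and $m=T+1$ gives $\beta m-\theta=1$, hence $\zeta=1/T<1/(T-1)=\min(1,\beta m-\theta)/(T-1)$, and Theorem~\ref{thm:refined} applies. The key insight you missed is that the exponential can be absorbed into the noise, reducing the update function to a bilinear map with a known neural-network approximation.
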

\begin{proof}
This follows directly from Theorem~\ref{thm:refined} and Lemma~\ref{lem:ExpLevy} with the choice $\zeta = \theta = \beta = \frac{1}{T}$, $m=T+1$, which ensures that  $\zeta < \frac{1}{T-1} = \frac{\min(1,\beta m-\theta)}{T-1}$.
\end{proof}

\subsection{Discrete diffusion models}
\label{subsec:discretediffusions}
Let $\bar{T}>0$ and let $X^d$ follow a discrete diffusion model with coefficients $\mu^d \colon [0,\bar{T}] \times \R^d \to \R^d$, $\sigma^d \colon [0,\bar{T}] \times \R^d \to \R^{d \times d}$, that is, $X^d$ satisfies $X_0^d=x^d$ and 
\begin{equation}
\label{eq:discreteDiffusion}
X^d_{k+1} = X_{k}^d + \mu^d(\mathfrak{t}_k,X_{k}^d) (\mathfrak{t}_{k+1}-\mathfrak{t}_k) + \sigma^d(\mathfrak{t}_k,X_k^d) (W^d_{\mathfrak{t}_{k+1}}-W^d_{\mathfrak{t}_k}), \quad k \in \{0,\ldots,T-1\}
\end{equation}
for some $0 \leq \mathfrak{t}_0<\mathfrak{t}_1<\ldots<\mathfrak{t}_T \leq \bar{T}$, $x^d \in \R^d$ and $W^d$ a $d$-dimensional Brownian motion.
Consider the following assumption on the drift and diffusion coefficients: 
\begin{assumption} \label{ass:driftdiffusion}
Assume that there exist constants $C>0$, $q,\tilde{\alpha},\tilde{\zeta} \geq 0$ and, for each $d \in \mathbb{N}$, $t \in \{0,\ldots,T-1\}$, $\varepsilon \in (0,1]$, 
there exist neural networks $\mu_{\varepsilon,d,t} \colon \R^d \to \R^d$ and
$\sigma_{\varepsilon,d,t,i} \colon \R^d \to \R^d$, $i=1,\ldots,d$, 
such that for all $d \in \mathbb{N}$, $\varepsilon \in (0,1]$, $t \in \{0,\ldots,T-1\}$, $x \in \R^d$ it holds that
\[ 
\begin{aligned}
\|\mu^d(\mathfrak{t}_t,x)-\mu_{\varepsilon,d,t}(x)\|+\|\sigma^d(\mathfrak{t}_t,x)-\sigma_{\varepsilon,d,t}(x)\|_F 
& \leq \varepsilon C d^q (1+\|x\|),
\\
\|\mu^{d}(\mathfrak{t}_t,x)\|+\|\sigma^{d}(\mathfrak{t}_t,x)\|_F
& \leq  C d^q (1 + \|x\|),		
\\ 
\mathrm{size}(\mu_{\varepsilon,d,t}) + \sum_{i=1}^d \mathrm{size}(\sigma_{\varepsilon,d,t,i}) 
& \leq C d^q \varepsilon^{-\tilde{\alpha}},
\\
\max\left(\mathrm{Lip}(\mu_{\varepsilon,d,t}),\mathrm{Lip}(\sigma_{\varepsilon,d,t,1}),\ldots,\mathrm{Lip}(\sigma_{\varepsilon,d,t,d})\right) & \leq C d^{q} \varepsilon^{-\tilde{\zeta}}.
\end{aligned} 
\]
\end{assumption}
Here we denote by $\sigma_{\varepsilon,d,t}(x) \in  \R^{d \times d}$ the matrix with $i$-th row $\sigma_{\varepsilon,d,t,i}(x)$.

\begin{corollary}
	\label{cor:DNNApproxDiscreteDiffusion} Let $X^d$ follow a discrete diffusion model with coefficients satisfying Assumption~\ref{ass:driftdiffusion} with $\tilde{\zeta}<\frac{1}{T-1}$.  
	Suppose $p \geq 2$ and  the payoff functions $g_d$ satisfy Assumption~\ref{ass:Payoff}. Let $c >0$, $q \geq 0$ and assume for all $d \in \N$ that $\rho^d$ is a probability measure on $\R^d$ with $\int_{\R^d} \|x\|^{2m\max(p,2)} \rho^d(dx) \leq c d^q$ for $m=\lceil \frac{2(1+\tilde{\zeta})}{\frac{1}{T-1}-\tilde{\zeta}} +1 \rceil $. 

Then there exist constants $\kappa,\mathfrak{q},\mathfrak{r} \in [0,\infty)$ and 
neural networks $\psi_{\varepsilon,d,t}$, $\varepsilon \in (0,1]$, $d \in \N$, $t \in \{0,\ldots,T\}$,  
such that for any $\varepsilon \in (0,1]$,  $d \in \N$, $t \in \{0,\ldots,T\}$ 
\begin{equation}
\label{eq:approxErrorCor2}
\mathrm{size}(\psi_{\varepsilon,d,t}) \leq \kappa d^{\mathfrak{q}}\varepsilon^{-\mathfrak{r}} \quad \text{ and } \quad 
\left(\int_{\R^d} |V_d(t,x) - \psi_{\varepsilon,d,t}(x)|^2 \rho^d(d x) \right)^{1/2} \leq \varepsilon.
\end{equation}	
\end{corollary}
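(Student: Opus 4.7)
The plan is to reduce the statement to Theorem~\ref{thm:refined} by verifying Assumption~\ref{ass:Markov2} for the process $X^d$ given by \eqref{eq:discreteDiffusion}. Writing $\Delta_t := \mathfrak{t}_{t+1}-\mathfrak{t}_t \in (0,\bar T]$ and $Y_t^d := W^d_{\mathfrak{t}_{t+1}}-W^d_{\mathfrak{t}_t}$, I read off the update rule
\[
f_t^d(x,y) \;=\; x + \Delta_t\, \mu^d(\mathfrak{t}_t,x) + \sigma^d(\mathfrak{t}_t,x)\, y,
\]
so items (i) and (ii) of Assumption~\ref{ass:Markov} follow immediately from the independence of disjoint Brownian increments. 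The constant-term estimate $\|f_t^d(0,0)\|\leq Cd^q\bar T$ and the Gaussian moment bound $\E[\|Y_t^d\|^{2m\max(p,2)}]\leq C_{m,p}\, d^{m\max(p,2)}$ in item (iv) follow from Assumption~\ref{ass:driftdiffusion} and standard normal moment estimates; the refined growth condition \eqref{eq:fGrowthbdd} then follows via the sufficient condition in Remark~\ref{remark:growth}.

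The core task is to construct the approximating network $\eta_{\varepsilon,d,t}$. My choice is
\[
\eta_{\varepsilon,d,t}(x,y) \;=\; x + \Delta_t\,\mu_{\varepsilon',d,t}(x) + \sum_{i=1}^d \Pi_{\varepsilon'',M}\bigl(\sigma_{\varepsilon',d,t,i}(x),\, y_i\bigr),
\]
where $\mu_{\varepsilon',d,t}$ and $\sigma_{\varepsilon',d,t,i}$ are supplied by Assumption~\ref{ass:driftdiffusion} and $\Pi_{\varepsilon'',M}$ is a standard Yarotsky-type ReLU realisation of scalar multiplication $(a,b)\mapsto ab$ of accuracy $\varepsilon''$ on $[-M,M]^2$, of size $O(\log(M/\varepsilon''))$ and extended piecewise-affinely outside. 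Setting $M=\sqrt d\,\varepsilon^{-\beta}$ and choosing $\varepsilon',\varepsilon''$ as suitable powers of $\varepsilon$, I would verify on the cube $[-\varepsilon^{-\beta},\varepsilon^{-\beta}]^d\times[-\varepsilon^{-\beta},\varepsilon^{-\beta}]^d$ that the approximation bound \eqref{eq:fapproxBdd} holds; that the size bound \eqref{eq:etasparseBdd} is polynomial in $d$ and $\varepsilon^{-1}$ since each of the $d^2$ product gadgets is polylogarithmic in those parameters; and that the Lipschitz bound \eqref{eq:etaLipschitzBdd} holds with $\zeta=\tilde\zeta+\beta$, because the derivative $\partial_x(\sigma(x)y)=y^{\top}\partial_x\sigma(x)$ contributes an extra factor $\|y\|\leq \sqrt d\, M$ beyond $\mathrm{Lip}(\sigma)$. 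The global linear-growth bound \eqref{eq:etaGrowthbdd} is then automatic from the Lipschitz bound with $\theta:=\zeta$, using $\|\eta(x,y)\|\leq\|\eta(0,0)\|+\mathrm{Lip}(\eta)(\|x\|+\|y\|)$ together with $\varepsilon\leq 1$.

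The final step is parameter matching. With $\zeta=\theta=\tilde\zeta+\beta$, the admissibility condition $\zeta<\frac{\min(1,\beta m-\theta)}{T-1}$ of Theorem~\ref{thm:refined} amounts to $\tilde\zeta+\beta<\tfrac1{T-1}$ together with $\beta m-(\tilde\zeta+\beta)\geq 1$, i.e.\ $m\geq 1+(1+\tilde\zeta)/\beta$. Taking $\beta=\tfrac12\bigl(\tfrac1{T-1}-\tilde\zeta\bigr)>0$ fulfils the first inequality thanks to the corollary's hypothesis $\tilde\zeta<\tfrac1{T-1}$ and turns the second into $m\geq 1+2(1+\tilde\zeta)/(\tfrac1{T-1}-\tilde\zeta)$, which is precisely the choice of $m$ stipulated in the corollary; Theorem~\ref{thm:refined} then delivers \eqref{eq:approxErrorCor2}. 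The main technical obstacle is engineering the product gadget so that \eqref{eq:fapproxBdd}, \eqref{eq:etasparseBdd} and \eqref{eq:etaLipschitzBdd} hold simultaneously on the stretched cube $[-\varepsilon^{-\beta},\varepsilon^{-\beta}]^d$ with a Lipschitz constant that blows up no faster than $\varepsilon^{-\zeta}$ for some $\zeta<\tfrac1{T-1}$; once those bounds are in place, the rest is routine exponent bookkeeping.
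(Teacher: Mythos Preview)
Your proposal is correct and follows essentially the same route as the paper: the paper packages the verification of Assumption~\ref{ass:Markov2} into a separate lemma (Lemma~\ref{lem:DiscreteDiffusion}), building $\eta_{\varepsilon,d,t}$ componentwise from the Yarotsky product gadget with $M\sim d^{q+1/2}\varepsilon^{-\beta}$, obtaining $\zeta=\theta=\beta+\tilde\zeta$ with $\beta$ free, and then in the corollary chooses $\beta=\tfrac12(\tfrac1{T-1}-\tilde\zeta)$ and $m=\lceil(1+\tilde\zeta)/\beta+1\rceil$ exactly as you do. One small point: your heuristic for the Lipschitz bound via $\partial_x(\sigma(x)y)=y^{\top}\partial_x\sigma(x)$ and $\|y\|\le\sqrt d\,M$ only works on the cube, whereas \eqref{eq:etaLipschitzBdd} requires a \emph{global} Lipschitz bound; the correct argument (which the paper carries out) is that the capped product gadget itself is globally $O(M)$-Lipschitz, and this propagates through the composition with $\sigma_{\varepsilon',d,t,i}$ to give the global bound $\mathrm{Lip}(\eta_{\varepsilon,d,t})\lesssim d^{\cdots}\varepsilon^{-(\beta+\tilde\zeta)}$.
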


\begin{proof}
By Lemma~\ref{lem:DiscreteDiffusion} it follows that Assumption~\ref{ass:Markov2} is satisfied. In addition, the constant $\beta >0 $ in Assumption~\ref{ass:Markov2} may be chosen arbitrarily and $\zeta = \theta = \beta + \tilde{\zeta}$. Thus, we may select $\beta = \frac{1}{T-1}-\tilde{\zeta}-\delta$ for some $\delta >0$ and then $\beta>0$ and $\zeta = \theta = \frac{1}{T-1}-\delta$. Choosing $\delta = \frac{1}{2}(\frac{1}{T-1}-\tilde{\zeta})$,  $m=\lceil \frac{1+\tilde{\zeta}}{\beta} +1 \rceil $ then ensures that  $\zeta < \frac{1}{T-1} = \frac{\min(1,\beta m-\theta)}{T-1}$. Theorem~\ref{thm:refined} hence implies \eqref{eq:approxErrorCor2}. 
\end{proof}

\subsection{Running minimum and maximum}
\label{subsec:runningMax}
In this subsection we show that our framework can also cover barrier options. This follows from the next proposition, which proves that for processes satisfying Assumption~\ref{ass:Markov2} also the processes augmented by their running maximum or minimum satisfy Assumption~\ref{ass:Markov2}. 
\begin{proposition} \label{prop:max}
Suppose Assumption~\ref{ass:Markov2} holds. Let $\bar{X}^1 = X^1$ and for $d \in \N$, $d \geq 2$, $t \in \{0,\ldots,T\}$ consider the $\R^{d}$-valued process 
$\bar{X}_t^d = (X^{d-1}_t,M^d_t)$, where either $M^d_t = \min_{i=1,\ldots,d-1} \min_{s=0,\ldots,t} X^{d-1}_{s,i}$ or $M^d_t = \max_{i=1,\ldots,d-1} \max_{s=0,\ldots,t} X^{d-1}_{s,i}$.  
Then $\bar{X}^d$, $d \in \N$, satisfy Assumption~\ref{ass:Markov2}.   
\end{proposition}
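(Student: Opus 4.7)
The plan is to reduce Assumption~\ref{ass:Markov2} for $\bar{X}^d$ to the corresponding hypotheses for $X^{d-1}$ by exploiting the fact that $\min$ and $\max$ are $1$-Lipschitz and admit exact ReLU realizations of size polynomial in the number of arguments (as in \cite[Lemma~4.12]{HornungJentzen2018}). First I set up the Markov structure: for $d\geq 2$, take $\bar{Y}^d_t = (Y^{d-1}_t, 0) \in \R^d$ and define
\[
\bar{f}^d_t\bigl((x,m),(y,z)\bigr) = \Bigl( f^{d-1}_t(x,y),\; g\bigl(m,\,f^{d-1}_t(x,y)\bigr) \Bigr), \qquad x,y\in\R^{d-1},\; m,z\in\R,
\]
where $g(m,v) = \min(m,\min_i v_i)$ in the min case and $g(m,v) = \max(m,\max_i v_i)$ in the max case. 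Then $\bar{X}^d_{t+1} = \bar{f}^d_t(\bar{X}^d_t,\bar{Y}^d_t)$, parts (i),(ii) of Assumption~\ref{ass:Markov} hold for $\bar{X}^d$ because $X^{d-1}$ satisfies them, and $\E[\|\bar{X}^d_0\|] \leq 2\E[\|X^{d-1}_0\|] < \infty$ since $|M^d_0|\leq \|X^{d-1}_0\|$.

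Next I build the approximating network. Since the scalar $\min$ (resp.\ $\max$) of $k$ inputs has an exact ReLU realization of size $\mathcal{O}(k)$ via $\min(a,b) = \tfrac{1}{2}(a+b - \varrho(a-b) - \varrho(b-a))$ (and similarly for $\max$), one can build a neural network $G_d$ realizing $g\colon \R\times\R^{d-1}\to\R$ exactly with $\mathrm{size}(G_d) \leq C d$ and $\mathrm{Lip}(G_d)\leq 1$ in the relevant norm. I then define
\[
\bar{\eta}_{\varepsilon,d,t}\bigl((x,m),(y,z)\bigr) = \Bigl( \eta_{\varepsilon,d-1,t}(x,y),\; G_d\bigl(m,\,\eta_{\varepsilon,d-1,t}(x,y)\bigr) \Bigr).
\]
Parallel composition and composition of ReLU networks keeps $\mathrm{size}(\bar{\eta}_{\varepsilon,d,t})\leq c(d-1)^q\varepsilon^{-\alpha}+Cd\leq c'd^{q'}\varepsilon^{-\alpha}$, giving \eqref{eq:etasparseBdd}. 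Using the $1$-Lipschitz bound $|g(m_1,v_1)-g(m_2,v_2)|\leq |m_1-m_2|+\|v_1-v_2\|$ I obtain
\[
\|\bar{\eta}_{\varepsilon,d,t}((x_1,m_1),\bar y_1)-\bar{\eta}_{\varepsilon,d,t}((x_2,m_2),\bar y_2)\|^2 \leq 3\,\mathrm{Lip}(\eta_{\varepsilon,d-1,t})^2\bigl(\|x_1-x_2\|+\|y_1-y_2\|\bigr)^2 + 2|m_1-m_2|^2,
\]
which yields \eqref{eq:etaLipschitzBdd} with a constant linear in $\mathrm{Lip}(\eta_{\varepsilon,d-1,t})$. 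For \eqref{eq:fapproxBdd}, note that if $\bar x=(x,m)\in[-\varepsilon^{-\beta},\varepsilon^{-\beta}]^d$ and $\bar y=(y,z)\in[-\varepsilon^{-\beta},\varepsilon^{-\beta}]^d$ then $x,y\in[-\varepsilon^{-\beta},\varepsilon^{-\beta}]^{d-1}$, so by $1$-Lipschitzness of $g$ and the original approximation bound,
\[
\|\bar{f}^d_t(\bar x,\bar y)-\bar{\eta}_{\varepsilon,d,t}(\bar x,\bar y)\|^2 \leq 2\|f^{d-1}_t(x,y)-\eta_{\varepsilon,d-1,t}(x,y)\|^2 \leq 2\varepsilon^2 c^2(d-1)^{2q}(1+\|x\|^p+\|y\|^p)^2,
\]
which delivers \eqref{eq:fapproxBdd} after absorbing constants and using $\|x\|\leq\|\bar x\|$, $\|y\|\leq\|\bar y\|$.

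For the growth and moment conditions I use that $|g(m,v)|\leq |m|+\|v\|$, yielding
\[
\|\bar{f}^d_t((x,m),(y,z))\|^2 \leq 3\|f^{d-1}_t(x,y)\|^2 + 2|m|^2, \qquad \|\bar{\eta}_{\varepsilon,d,t}((x,m),(y,z))\|\leq 2\|\eta_{\varepsilon,d-1,t}(x,y)\|+|m|.
\]
Raising the first inequality to the power $m\max(p,2)$ (using $(a+b)^k\leq 2^{k-1}(a^k+b^k)$) and taking expectations over $\bar{Y}^d_t$, the bound \eqref{eq:fGrowthbdd} for $\bar{f}^d_t$ follows from the corresponding bound for $f^{d-1}_t$, the $|m|^{2m\max(p,2)}\leq\|(x,m)\|^{2m\max(p,2)}$ inequality, and the observation that the power of $\bar h d^{\bar q}$ only picks up a fixed multiplicative factor. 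The bound \eqref{eq:etaGrowthbdd} for $\bar{\eta}_{\varepsilon,d,t}$ and the estimate $\|\bar{f}^d_t(0,0)\|\leq 2cd^q$ from Assumption~\ref{ass:Markov}(iv) are immediate. The moment bound $\E[\|\bar Y^d_t\|^{2m\max(2,p)}]=\E[\|Y^{d-1}_t\|^{2m\max(2,p)}]\leq c(d-1)^q$ is inherited directly.

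The statement also claims validity for all $d\in\N$, so the base case $d=1$ (where $\bar{X}^1 = X^1$) holds by hypothesis. The induction on $d$ is vacuous since the construction uses only the assumption on $X^{d-1}$ and yields the assumption for $\bar X^d$ with enlarged but still dimension-independent exponents $q$, $\bar q$. The main bookkeeping obstacle is tracking that every constant that replaces $c$, $q$, $\bar q$, $h$ in passing from $X^{d-1}$ to $\bar X^d$ remains finite and does not depend on $d$ or $\varepsilon$; this is ensured because all the modifications (the min/max layer, the scalar coordinate $M^d_t$, and the Euclidean-norm algebra) contribute only absolute constants and at most a linear-in-$d$ term absorbed into $d^{q+1}$.
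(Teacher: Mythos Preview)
Your proof is correct and follows essentially the same approach as the paper: the same Markov structure $\bar{Y}^d_t=(Y^{d-1}_t,0)$ and $\bar{f}^d_t$, the same exact ReLU realization of $\min/\max$ composed with $\eta_{\varepsilon,d-1,t}$, and the same use of the $1$-Lipschitz property to transfer the approximation, Lipschitz, growth and moment bounds from $X^{d-1}$ to $\bar{X}^d$. Your treatment is in fact slightly more uniform than the paper's---you handle the min and max cases together via $|g(m,v)|\leq |m|+\|v\|$, whereas the paper separates them and uses the looser bound $\|\bar{f}^d_t(x,y)\|\leq d\|f^{d-1}_t\|+\|x\|$ in the max case---and your $\mathcal{O}(d)$ size for the min/max network is sharper than the paper's cited $\mathcal{O}(d^3)$, though either suffices.
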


The proof is given at the end of Section~\ref{subsec:sufficient} below.

\section{Proofs}\label{sec:Proofs}

This section contains the remaining proofs of the results in Section~\ref{sec:DNN}. The section is split in several subsections. In Section~\ref{subsec:product} we provide a refined result on deep neural network approximations of the product function $\R \times \R \to \R, (x,y) \mapsto xy$. Section~\ref{subsec:sufficient} then contains two lemmas in which this approximation result is applied to verify that suitable exponential L\'evy and discrete diffusion models satisfy Assumption~\ref{ass:Markov2}. Subsequently, Section~\ref{subsec:auxiliary} contains auxiliary results needed for the proofs of Theorem~\ref{thm:DNNapprox} and Theorem~\ref{thm:refined}. The proofs of these two results are then contained in Sections~\ref{subsec:DNNapproxProof} and \ref{subsec:DNNapproxProofrefined}. 

\subsection{Deep neural network approximation of the product}
\label{subsec:product}
Based on \cite[Proposition~3]{Yarotsky2017} we provide here a refined result regarding deep neural network approximations of the product function $\R \times \R \to \R, (x,y) \mapsto xy$.
\begin{lemma}
	\label{lem:productrefined}
	There exists $c>0$ such that for any $\varepsilon \in (0,1]$, $M\geq 1$ there exists a neural network ${\mathfrak{n}}_{\varepsilon,M} \colon \R \times \R \to \R$
	with
	\begin{equation}
	\label{eq:productApprox}
	\sup_{x,y \in [-M,M]} |{\mathfrak{n}}_{\varepsilon,M}(x,y) - x y| < \varepsilon,
	\end{equation}
	$\mathrm{size}({\mathfrak{n}}_{\varepsilon,M}) \leq c(\log(\varepsilon^{-1})+\log(M)+1)$ and for all $x,x',y,y' \in \R$
	\[
	|{\mathfrak{n}}_{\varepsilon,M}(x,y)-{\mathfrak{n}}_{\varepsilon,M}(x',y')| \leq M c(|x-x'|+|y-y'|).
	\]
\end{lemma}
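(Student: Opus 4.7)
The plan is to approximate the product $xy$ by reducing it to squaring via the polarization identity $xy = \tfrac{1}{4}[(x+y)^2 - (x-y)^2]$, and then to rescale and combine a Yarotsky-type approximation of $u \mapsto u^2$ on $[0,1]$ with a clipping network that enforces the global Lipschitz bound.

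The first ingredient is \cite[Proposition~3]{Yarotsky2017} (or the variant in \cite{PETERSEN2018296}), which supplies, for each $\delta \in (0,1]$, a ReLU network $\mathfrak{s}_\delta\colon\R\to\R$ with $|\mathfrak{s}_\delta(u)-u^2|\le \delta$ for $u\in[0,1]$ and $\mathrm{size}(\mathfrak{s}_\delta)\le c_0\log(\delta^{-1})$. Inspection of the sawtooth construction $\mathfrak{s}_\delta(u)=u-\sum_{s=1}^{m} g_s(u)/2^{2s}$, where $g_s$ is the $s$-fold composition of a triangle wave of slope $\pm 2$, shows that on $[0,1]$ the derivative is bounded by $1+\sum_{s=1}^m 2^{-s}<2$, so $\mathfrak{s}_\delta$ is $L_0$-Lipschitz on $[0,1]$ for an absolute constant $L_0$. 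The second ingredient is the constant-size clipping network $\mathfrak{c}_M(u):=\varrho(u+M)-\varrho(u-M)-M$, which is $1$-Lipschitz, agrees with the identity on $[-M,M]$, and takes values in $[-M,M]$.

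With these in hand, I would set $\delta:=\varepsilon/(4M^2)$ and define
\[
\mathfrak{n}_{\varepsilon,M}(x,y) := M^2\Bigl[\mathfrak{s}_\delta\!\left(\tfrac{|\mathfrak{c}_M(x)+\mathfrak{c}_M(y)|}{2M}\right)-\mathfrak{s}_\delta\!\left(\tfrac{|\mathfrak{c}_M(x)-\mathfrak{c}_M(y)|}{2M}\right)\Bigr],
\]
using $|u|=\varrho(u)+\varrho(-u)$ to realise the absolute value. On $[-M,M]^2$ clipping is the identity, so $|x\pm y|/(2M)\in[0,1]$, and the polarization identity together with $|\mathfrak{s}_\delta(u)-u^2|\le\delta$ yields approximation error at most $2M^2\delta = \varepsilon/2<\varepsilon$. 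The total size — two parallel copies of $\mathfrak{c}_M$, one affine layer producing $(\mathfrak{c}_M(x)\pm\mathfrak{c}_M(y))/(2M)$, two absolute-value blocks, two copies of $\mathfrak{s}_\delta$, and a final linear layer with weights $\pm M^2$ — is $O(\log(\delta^{-1}))=O(\log(\varepsilon^{-1})+\log M+1)$, matching the claim.

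For the Lipschitz estimate, the key observation is that clipping forces the argument fed to $\mathfrak{s}_\delta$ into $[0,1]$, so only the $[0,1]$-Lipschitz constant $L_0$ is relevant to the composition. Chaining $1$-Lipschitz clipping, $1$-Lipschitz absolute value, the $\tfrac{1}{2M}$ rescaling, $L_0$-Lipschitz $\mathfrak{s}_\delta$, and the $M^2$ prefactor, and summing the two branches, gives $|\mathfrak{n}_{\varepsilon,M}(x,y)-\mathfrak{n}_{\varepsilon,M}(x',y')|\le L_0 M(|x-x'|+|y-y'|)$ for \emph{all} real inputs. The main technical subtlety — and the reason the clipping is necessary rather than a mere convenience — is precisely ensuring that this Lipschitz bound is global; without the clipping step, Yarotsky's piecewise-linear approximant could a priori grow quickly outside $[0,1]$, breaking the uniform-in-$x,y$ constant $Mc$.
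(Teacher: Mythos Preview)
Your argument is correct and follows essentially the same strategy as the paper: a Yarotsky-type approximation, clipping to force a global Lipschitz bound, and rescaling to pass from the unit box to $[-M,M]$. The only cosmetic difference is that the paper cites a ready-made product approximation on $[-1,1]^2$ with uniform Lipschitz constant (from \cite{Grohs2021}, \cite{Opschoor2020}), composes it with a clip to $[-1,1]$, and then rescales by $M$, whereas you clip at level $M$ first and unfold the product explicitly via polarization and a direct Lipschitz analysis of the sawtooth squaring network; your version is slightly more self-contained, the paper's slightly shorter.
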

\begin{proof}
	By \cite[Lemma~4.2]{Grohs2021} or \cite[Proposition~4.1]{Opschoor2020} (based on \cite[Proposition~3]{Yarotsky2017}), there exists $c>0$ such that for any ${\bar{\varepsilon}} \in (0,\frac{1}{2})$ there exists a neural network $\mathfrak{n}_{{\bar{\varepsilon}}} \colon \R \times \R \to \R$  with the property that $\sup_{x,y \in [-1,1]} |\mathfrak{n}_{\bar{\varepsilon}}(x,y) - x y| < {\bar{\varepsilon}}$,  $\mathrm{size}(\mathfrak{n}_{\bar{\varepsilon}}) \leq c(\log({\bar{\varepsilon}}^{-1})+1)$ and \[\sup_{x,x',y,y' \in [-1,1]}|\mathfrak{n}_{\bar{\varepsilon}}(x,y)-\mathfrak{n}_{\bar{\varepsilon}}(x',y')| \leq c(|x-x'|+|y-y'|).
	\]

	Consider now the capped neural network $\bar{\mathfrak{n}}_{\bar{\varepsilon}}(x,y) = \mathfrak{n}_{\bar{\varepsilon}}(\pi_1(x),\pi_1(y))$, where we set $\pi_1(z)=\max(-1,\min(z,1))$. Then $\bar{\mathfrak{n}}_{\bar{\varepsilon}}(x,y) = \mathfrak{n}_{\bar{\varepsilon}} \circ \mathrm{cap}(x,y)$ and it can be verified that $\mathrm{cap}$ is again a neural network 
	and for $x,y \in [-1,1]$ we have  $\bar{\mathfrak{n}}_{\bar{\varepsilon}}(x,y) = \mathfrak{n}_{\bar{\varepsilon}}(x,y) $.	
	The fact that the composition of two ReLU neural networks can again be realized by a ReLU neural network with size bounded by twice the sum of the respective sizes (see, e.g. \citet[Proposition~2.2]{Opschoor2020}) hence proves that there exists $\tilde{c}\geq c$ such that for all ${\bar{\varepsilon}} \in (0,\frac{1}{2})$ we have $\mathrm{size}(\bar{\mathfrak{n}}_{\bar{\varepsilon}}) \leq \tilde{c}(\log({\bar{\varepsilon}}^{-1})+1)$. Furthermore,   $\sup_{x,y \in [-1,1]} |\bar{\mathfrak{n}}_{\bar{\varepsilon}}(x,y) - x y| < {\bar{\varepsilon}}$ and for all $x,x',y,y' \in \R$
	\[
	|\bar{\mathfrak{n}}_{\bar{\varepsilon}}(x,y)-\bar{\mathfrak{n}}_{\bar{\varepsilon}}(x',y')| \leq c(|\pi_1(x)-\pi_1(x')|+|\pi_1(y)-\pi_1(y')|) \leq \tilde{c}(|x-x'|+|y-y'|).
	\]
	Now let $\varepsilon \in (0,1]$, $M \geq 1$ be given, choose $\bar{\varepsilon} = 3^{-1} M^{-2} \varepsilon$ and define the rescaled network $\mathfrak{n}_{{{\varepsilon}},M}(x,y) = M^2 \bar{\mathfrak{n}}_{\bar{\varepsilon}}(\frac{x}{M},\frac{y}{M})$. Then
	\[
	\sup_{x,y \in [-M,M]} |{\mathfrak{n}}_{{{\varepsilon}},M}(x,y) - x y| = M^2 \sup_{x,y \in [-M,M]} |\bar{\mathfrak{n}}_{{\bar{\varepsilon}}}(\frac{x}{M},\frac{y}{M}) - \frac{x}{M} \frac{y}{M}| < M^2 {\bar{\varepsilon}},
	\]
	$\mathrm{size}(\mathfrak{n}_{{{\varepsilon}},M}) \leq \tilde{c}(\log({\bar{\varepsilon}}^{-1})+1)$ and for all $x,x',y,y' \in \R$
	\[
	|\mathfrak{n}_{{{\varepsilon}},M}(x,y)-{\mathfrak{n}}_{{{\varepsilon}},M}(x',y')| = M^2 |\bar{\mathfrak{n}}_{{\bar{\varepsilon}}}(\frac{x}{M},\frac{y}{M})-\bar{\mathfrak{n}}_{{\bar{\varepsilon}}}(\frac{x'}{M},\frac{y'}{M})| \leq M \tilde{c}(|x-x'|+|y-y'|).
	\]
\end{proof}

\subsection{Sufficient conditions}
\label{subsec:sufficient}
In this subsection we prove Lemma~\ref{lem:ExpLevy} and Lemma~\ref{lem:DiscreteDiffusion}, which show that the exponential L\'evy and discrete diffusion models considered above satisfy Assumption~\ref{ass:Markov2}. We also provide a proof of Proposition~\ref{prop:max}. 
\begin{lemma}
\label{lem:ExpLevy}
Let  $X^d$ follow an exponential L\'evy model (cf. \eqref{eq:expLevy}) for each $d \in \N$ and assume that the L\'evy triplets $(A^d,\gamma^d,\nu ^d)$ are bounded in the dimension, that is, 
there exists $B > 0$ such that 
for any $d \in \N$, $i,j=1,\ldots,d$
\begin{equation}
\label{eq:LevyTripletBounded}
\max
\left(
A^d_{i,j},
\gamma^d_i ,
\int_{\{\|z\|> 1\}} e^{\bar{p} z_i} \nu^d(d z),
\int_{\{\|z\|\leq 1\}} z_i^2 \nu^d(d z)
\right) \leq B,
\end{equation}
where $\bar{p} = 2m\max{(2,p)}$. 
Then Assumption~\ref{ass:Markov2} is satisfied with constant $\beta >0 $ in Assumption~\ref{ass:Markov2} chosen arbitrarily and with $\zeta = \theta = \beta$. 
\end{lemma}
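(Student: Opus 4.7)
The first step is to identify the update function and the noise: since $L^d$ has independent and stationary increments, set $Y^d_t := L^d_{t+1} - L^d_t$, so that the vectors $X^d_0, Y^d_0, \ldots, Y^d_{T-1}$ are independent, and the exponential Lévy recursion reads $X^d_{t+1,i} = X^d_{t,i}\exp(Y^d_{t,i})$. This defines $f^d_t(x,y)_i = x_i \exp(y_i)$ and immediately gives conditions (i), (ii) of Assumption~\ref{ass:Markov}, together with $\|f^d_t(0,0)\|=0$ in (iv). The moment conditions come from the Lévy-Khintchine formula: the hypothesis $\int_{\{\|z\|>1\}} e^{\bar p z_i}\nu^d(dz) \leq B$ with $\bar p := 2m\max(2,p)$, combined with the bounds on $A^d$ and $\gamma^d$, yields $\E[e^{\bar p L^d_{1,i}}] \leq K(B,\bar p)$ uniformly in $d, i$ via Sato's equivalence (\cite[Theorem~25.3]{Sato1999}). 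A power-mean inequality then gives $\E[\|Y^d_t\|^{\bar p}] \leq K d^{\bar p/2}$, and analogously $\E[\|f^d_t(x, Y^d_t)\|^{\bar p}] \leq K d^{\bar p/2-1}(1+\|x\|^{\bar p})$, verifying \eqref{eq:fGrowthbdd} with $h=K$, $\bar q = \bar p/2$.

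The core task is to construct $\eta_{\varepsilon,d,t}$ satisfying \eqref{eq:fapproxBdd}--\eqref{eq:etaGrowthbdd} on the cube $[-\varepsilon^{-\beta}, \varepsilon^{-\beta}]^{2d}$. I would build, for each coordinate $i$, a scalar network $\eta_{\varepsilon,d,t}(x,y)_i = \mathfrak{n}_{\tilde\varepsilon, M}\bigl(\pi_{\varepsilon^{-\beta}}(x_i),\, E_{\delta}(\pi_{\varepsilon^{-\beta}}(y_i))\bigr)$, where $\pi_R$ is the one-layer ReLU cap at level $R$, $E_\delta$ is a ReLU network approximating $\exp$ on $[-\varepsilon^{-\beta}, \varepsilon^{-\beta}]$ to accuracy $\delta$, and $\mathfrak{n}_{\tilde\varepsilon,M}$ is the product network of Lemma~\ref{lem:productrefined} with amplitude $M$ and accuracy $\tilde\varepsilon$. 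The scalar network $E_\delta$ is constructed from a truncated Taylor series, assembling monomials out of Lemma~\ref{lem:productrefined}; its size is polynomial in $\log(\delta^{-1})$ and $\varepsilon^{-\beta}$. Stacking the $d$ scalar networks in parallel, and tuning $\delta$, $\tilde\varepsilon$, $M$ as suitable polynomial-in-$(\varepsilon^{-1},d)$ quantities, yields $\eta_{\varepsilon, d, t}$ with size bounded as in \eqref{eq:etasparseBdd} by standard composition and parallelization results (see e.g. \cite[Proposition~2.2]{Opschoor2020}); the presence of the cap networks enforces the polynomial growth \eqref{eq:etaGrowthbdd} and Lipschitz bound \eqref{eq:etaLipschitzBdd} with $\theta = \zeta = \beta$ via the Lipschitz constant $Mc$ of $\mathfrak{n}_{\tilde\varepsilon,M}$ supplied by Lemma~\ref{lem:productrefined}.

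The main obstacle is balancing the pointwise approximation bound \eqref{eq:fapproxBdd} against the polynomial-in-$\varepsilon^{-1}$ Lipschitz and growth requirements \eqref{eq:etaLipschitzBdd}--\eqref{eq:etaGrowthbdd}: on $[-\varepsilon^{-\beta},\varepsilon^{-\beta}]$ the exponential $\exp(y_i)$ can attain super-polynomial values, yet the assembled network must remain polynomially Lipschitz and polynomially bounded globally. The key observation that unlocks this is that the tolerance $\varepsilon c d^{q}(1+\|x\|^p+\|y\|^p)$ in \eqref{eq:fapproxBdd} itself grows with $\|x\|^p+\|y\|^p$, which is precisely large in the regime where $e^{y_i}$ is large; this slack is what permits a clipped/truncated replacement of $\exp$ without destroying \eqref{eq:fapproxBdd}. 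A careful triangle-inequality decomposition of the error into the exp-approximation error, the multiplication error from Lemma~\ref{lem:productrefined}, and the clipping error, each matched against a corresponding piece of the tolerance, closes the estimate and delivers the remaining constants in Assumption~\ref{ass:Markov2} with $\zeta = \theta = \beta$, $\beta>0$ arbitrary.
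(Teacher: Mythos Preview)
Your choice of noise variable $Y^d_t := L^d_{t+1}-L^d_t$ forces the update function $f^d_t(x,y)_i = x_i e^{y_i}$ to contain the exponential, and this creates an irreparable conflict between \eqref{eq:fapproxBdd} and \eqref{eq:etaGrowthbdd}. Take $x_i=y_i=\varepsilon^{-\beta}$ at a corner of the cube. Then $f^d_t(x,y)_i = \varepsilon^{-\beta}e^{\varepsilon^{-\beta}}$, while any network satisfying the polynomial growth bound \eqref{eq:etaGrowthbdd} with $\theta=\beta$ is at most of order $\varepsilon^{-2\beta}$ there. The resulting error is of order $e^{\varepsilon^{-\beta}}$, but the tolerance on the right of \eqref{eq:fapproxBdd} is only $\varepsilon c d^q(1+\|x\|^p+\|y\|^p)\lesssim \varepsilon^{1-\beta p}$, a fixed polynomial in $\varepsilon^{-1}$. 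Your ``key observation'' that the tolerance is large where $e^{y_i}$ is large is therefore false: the tolerance grows polynomially in $\|y\|$, whereas $e^{y_i}$ grows exponentially. The caps $\pi_{\varepsilon^{-\beta}}$ do nothing inside the cube, which is exactly where \eqref{eq:fapproxBdd} must hold.

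The paper's proof avoids this by a different (and simpler) identification: it sets $Y^d_{t,i}:=\exp(L^d_{t+1,i}-L^d_{t,i})$, so that the update function becomes the pure componentwise product $f^d_t(x,y)_i = x_i y_i$. Then $f^d_t$ is polynomial on the cube, and $\eta_{\varepsilon,d,t}$ is simply the $d$-fold parallelization of the product network $\mathfrak{n}_{\varepsilon,M}$ from Lemma~\ref{lem:productrefined} with $M=\varepsilon^{-\beta}$, which immediately gives \eqref{eq:fapproxBdd}--\eqref{eq:etaGrowthbdd} with $\zeta=\theta=\beta$. The exponential moment hypothesis on $\nu^d$ is precisely what is needed for $\E[\|Y^d_t\|^{\bar p}]$ and \eqref{eq:fGrowthbdd} to hold with this exponentiated noise. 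No neural approximation of $\exp$ is ever needed.
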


\begin{proof}
Firstly, \eqref{eq:expLevy} shows for each $d \in \N$, $t \in \{0,\ldots,T-1\}$ that $X_{t+1,i}^d = X_{t,i}^d \exp(L_{t+1,i}^d-L_{t,i}^d)$ for all $i=1,\ldots, d$. Therefore, $X_{t+1}^d = f^d_t(X_t^d,Y^d_t)$ with $Y_{t,i}^d = \exp(L_{t+1,i}^d-L_{t,i}^d)$ and $f^d_t \colon \R^d \times \R^d \to \R^d$ given by $f^d_t(x,y) = (x_1 y_1,\ldots,x_d y_d)$ for $x,y \in \R^d$, i.e., \eqref{eq:MarkovUpdate} is satisfied. 
Since $L^d$ has independent increments, it follows that Assumption~\ref{ass:Markov}(ii) is satisfied.
Next, we can employ an argument from the proof of \cite[Theorem~5.1]{GS20_925} (which uses \cite[Theorem~25.17]{Sato1999}  and 
\eqref{eq:LevyTripletBounded}) to obtain for any $d \in \N$, $i=1,\ldots,d$ that
\begin{align} 
\nonumber
\E[e^{\bar{p} L_{1,i}^d}] & = \exp\left(\frac{\bar{p}^2}{2}A^d_{i,i} + \int_{\R^d} (e^{\bar{p} y_i}-1-\bar{p} y_i \mathbbm{1}_{\{\|y\|\leq 1\}}) \nu^d(d y) + \bar{p} \gamma^d_i \right)
\\ & \leq \exp\left(\frac{5 \bar{p}^2}{2}B + \bar{p}^2 e^{\bar{p}} B \right).
\label{eq:expMoment} 
\end{align}
Combined with Minkowski's inequality and the stationarity increments property of $L^d$ this yields 
\begin{equation}
\begin{aligned}
\label{eq:auxEq57} 
\E[\|Y_{t}^d\|^{2m\max{(2,p)}}] & = \left(\E\left[\left(\sum_{i=1}^d |Y_{t,i}^d|^2\right)^{m\max{(2,p)}}\right]^{\frac{1}{m\max{(2,p)}}}\right)^{m\max{(2,p)}}
\\ & \leq  \left(\sum_{i=1}^d \E\left[ |Y_{t,i}^d|^{2m\max{(2,p)}}\right]^{\frac{1}{m\max{(2,p)}}}\right)^{m\max{(2,p)}}
\\ & =  \left(\sum_{i=1}^d \E\left[ e^{2m\max{(2,p)}L_{1,i}^d}\right]^{\frac{1}{m\max{(2,p)}}}\right)^{m\max{(2,p)}}
\\ & \leq d^{m\max{(2,p)}} \exp\left(\frac{5 \bar{p}^2}{2}B + \bar{p}^2 e^{\bar{p}} B \right).
\end{aligned} 
\end{equation}
Furthermore, $f_t^d(0,0) = 0$ and thus Assumption~\ref{ass:Markov}(iv) is satisfied. Next, for $\varepsilon \in (0,1]$, $d \in \N$, $t \in \{0,\ldots,T-1\}$ let $M=\varepsilon^{-\beta}$ and let $\eta_{\varepsilon,d,t} \colon \R^d \times \R^d \to \R^d$ be the $d$-fold parallelization of ${\mathfrak{n}}_{\varepsilon,M}$ from Lemma~\ref{lem:productrefined}. Then for all $x,y \in [-(\varepsilon^{-\beta}),\varepsilon^{-\beta}]^d$ we obtain  
\[
\| f^d_{t}(x,y) - \eta_{\varepsilon,d,t}(x,y)\| = \left(\sum_{i=1}^d |x_i y_i - {\mathfrak{n}}_{\varepsilon,M}(x_i,y_i)|^2 \right)^{1/2} \leq \varepsilon d^{\frac{1}{2}},
\]
$\mathrm{size}(\eta_{\varepsilon,d,t}) \leq d \mathrm{size}({\mathfrak{n}}_{\varepsilon,M}) \leq  cd((\beta+1)\log(\varepsilon^{-1})+1) \leq c_1 d \varepsilon^{-1}$ with $c_1 = c(\beta+2)$ and for all $x,x',y,y' \in \R^d$
\[
\begin{aligned}
\| \eta_{\varepsilon,d,t}(x,y) - \eta_{\varepsilon,d,t}(x',y')\| & = \left(\sum_{i=1}^d | {\mathfrak{n}}_{\varepsilon,M}(x_i,y_i)-{\mathfrak{n}}_{\varepsilon,M}(x_i',y_i')|^2 \right)^{1/2} 
\\ & \leq 
\left(\sum_{i=1}^d | Mc(|x_i-x_i'| + |y_i-y_i'|)|^2 \right)^{1/2} 
\\ & \leq  \sqrt{2}
\varepsilon^{-\beta} c (\|x-x'\| + \|y-y'\|).
\end{aligned}
\]
Finally, for all $x,y \in \R^d$
\[
\begin{aligned}
\|\eta_{\varepsilon,d,t}(x,y)\| &  \leq  \| \eta_{\varepsilon,d,t}(x,y)-\eta_{\varepsilon,d,t}(0,0)\|+\|\eta_{\varepsilon,d,t}(0,0)-f_t^d(0,0)\| 
\\ & \leq  \varepsilon^{-\beta} c (\|x\| + \|y\|) + \varepsilon d^{\frac{1}{2}}
\\ & \leq \varepsilon^{-\beta} \max(c,1)  d^{\frac{1}{2}} (1+\|x\| + \|y\|) 
\end{aligned}
\]
and Minkowski's integral inequality and \eqref{eq:auxEq57} imply
\begin{align*}
\E[\|f_t^d(x,Y_t^d)\|^{2m\max(p,2)}] &  = \E\left[\left(\sum_{i=1}^d x_i^2 (Y_{t,i}^d)^2 \right)^{m\max(p,2)}\right]
\\ &  \leq \left( \sum_{i=1}^d \left(\E[x_i^{2m\max(p,2)} (Y_{t,i}^d)^{2m\max(p,2)}]\right)^{1/(m\max(p,2))} \right)^{m\max(p,2)}
\\ &  \leq \left( \sum_{i=1}^d x_i^{2}  \right)^{m\max(p,2)} \E[ \|Y_{t}^d\|^{2m\max(p,2)}]
\\ & \leq  d^{m\max{(2,p)}} \exp\left(\frac{5 \bar{p}^2}{2}B + \bar{p}^2 e^{\bar{p}} B \right)  (1+\|x\|^{2m\max(p,2)}).
\end{align*}
\end{proof}

\begin{lemma}
\label{lem:DiscreteDiffusion}
Assume $p \geq 2$, let $X^d$ follow a discrete diffusion model with coefficients $\mu^d \colon [0,\bar{T}] \times \R^d \to \R^d$, $\sigma^d \colon [0,\bar{T}] \times \R^d \to \R^{d \times d}$ and suppose Assumption~\ref{ass:driftdiffusion} holds. 
Then Assumption~\ref{ass:Markov2} is satisfied with constants $m \in \N$, $\beta >0 $ in Assumption~\ref{ass:Markov2} chosen arbitrarily and with $\zeta = \theta = \beta + \tilde{\zeta}$. 
\end{lemma}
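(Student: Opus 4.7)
The strategy is to read off the update function directly from \eqref{eq:discreteDiffusion} and assemble $\eta_{\varepsilon,d,t}$ by feeding the given networks $\mu_{\varepsilon,d,t}$ and $\sigma_{\varepsilon,d,t,i}$ into copies of the product network $\mathfrak{n}_{\varepsilon',M}$ from Lemma~\ref{lem:productrefined}, so as to approximate the matrix--vector product $\sigma^d(\mathfrak{t}_t,x) y$ entry by entry. The argument will parallel the proof of Lemma~\ref{lem:ExpLevy}; the main new ingredient is an extra summation over the entries of $\sigma^d$.

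I would first set $f_t^d(x,y) = x + \mu^d(\mathfrak{t}_t,x)(\mathfrak{t}_{t+1}-\mathfrak{t}_t) + \sigma^d(\mathfrak{t}_t,x) y$ and $Y_t^d = W^d_{\mathfrak{t}_{t+1}} - W^d_{\mathfrak{t}_t}$, so that \eqref{eq:MarkovUpdate} holds. Assumption~\ref{ass:Markov}(ii) then follows from independent increments of Brownian motion; the bound $\|f_t^d(0,0)\| \leq \bar{T} C d^q$ is immediate from Assumption~\ref{ass:driftdiffusion}; and standard Gaussian moment bounds together with $\mathfrak{t}_{t+1}-\mathfrak{t}_t \leq \bar{T}$ give $\E[\|Y_t^d\|^{2m\max(2,p)}] \leq c_m d^{m\max(2,p)}$ for any $m \in \N$.

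To construct the approximating network I would take the $i$-th component of $\eta_{\varepsilon,d,t}(x,y)$ to be
\[
x_i + (\mathfrak{t}_{t+1}-\mathfrak{t}_t)[\mu_{\varepsilon,d,t}(x)]_i + \sum_{j=1}^d \mathfrak{n}_{\varepsilon',M}\!\bigl([\sigma_{\varepsilon,d,t,i}(x)]_j,\, y_j\bigr),
\]
choosing $M$ large enough to dominate both $|y_j|\leq \varepsilon^{-\beta}$ and the magnitude of $[\sigma_{\varepsilon,d,t,i}(x)]_j$ on the box. The latter is controlled by writing $\|\sigma_{\varepsilon,d,t,i}(x)\|\leq \|\sigma_{\varepsilon,d,t,i}(x)-\sigma^d_{i,\cdot}(\mathfrak{t}_t,x)\| + \|\sigma^d_{i,\cdot}(\mathfrak{t}_t,x)\|\leq 2Cd^q(1+\sqrt{d}\varepsilon^{-\beta})$, so $M = c_1 d^{q+1/2}\varepsilon^{-\beta}$ suffices. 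Taking $\varepsilon' = \varepsilon\, d^{-O(1)}$ then ensures that the $d^2$ individual product errors sum to at most $\varepsilon$. The size bound \eqref{eq:etasparseBdd} follows by adding the sizes of $\mu_{\varepsilon,d,t}$, the $\sigma_{\varepsilon,d,t,i}$ (already summed in Assumption~\ref{ass:driftdiffusion}), and $d^2$ logarithmically sized product networks. The bound \eqref{eq:fapproxBdd} decomposes into the drift error $\leq \bar{T}\varepsilon Cd^q(1+\|x\|)$, a matrix-approximation error bounded by Cauchy--Schwarz by $\varepsilon Cd^q(1+\|x\|)\|y\|$, and the cumulative product error $\leq d^{3/2}\varepsilon'$; since $p\geq 2$, Young's inequality collapses these three terms to the required form $\varepsilon c d^q(1+\|x\|^p+\|y\|^p)$.

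The Lipschitz and growth constants $\zeta=\theta=\beta+\tilde{\zeta}$ will arise because $\mathrm{Lip}(\mathfrak{n}_{\varepsilon',M})\leq Mc$ with $M\sim d^{O(1)}\varepsilon^{-\beta}$ is composed with $\mathrm{Lip}(\sigma_{\varepsilon,d,t,i})\leq Cd^q\varepsilon^{-\tilde{\zeta}}$, yielding $\mathrm{Lip}(\eta_{\varepsilon,d,t})\leq c d^{O(1)}\varepsilon^{-\beta-\tilde{\zeta}}$. For the global growth bound \eqref{eq:etaGrowthbdd} I would combine $|\mathfrak{n}_{\varepsilon',M}(a,b)|\leq \varepsilon' + Mc(|a|+|b|)$ with $\|\sigma_{\varepsilon,d,t,i}(x)\|\leq \|\sigma_{\varepsilon,d,t,i}(0)\| + Cd^q\varepsilon^{-\tilde{\zeta}}\|x\|$ (from its Lipschitz constant) to obtain $\|\eta_{\varepsilon,d,t}(x,y)\|\leq c d^{O(1)}\varepsilon^{-\beta-\tilde{\zeta}}(2+\|x\|+\|y\|)$. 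Finally, \eqref{eq:fGrowthbdd} follows by expanding $\|f_t^d(x,Y_t^d)\|^{2m\max(p,2)}$ and applying the polynomial growth of $\mu^d,\sigma^d$ from Assumption~\ref{ass:driftdiffusion} together with Gaussian moments of $\|Y_t^d\|$; in particular the cross term $\|\sigma^d(\mathfrak{t}_t,x)Y_t^d\|^{\bar p}$ contributes $\leq (Cd^q)^{\bar p}(1+\|x\|)^{\bar p}\E[\|Y_t^d\|^{\bar p}]$, which is of the required form. I expect the main obstacle to be the careful bookkeeping of polynomial factors in $d$ and $\varepsilon^{-1}$ when combining the three sources of error and chaining Lipschitz constants through the product networks; the overall structure is clear but the inequalities are fiddly.
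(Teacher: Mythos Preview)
Your proposal is correct and follows essentially the same route as the paper: identify $f_t^d(x,y)=x+\mu^d(\mathfrak{t}_t,x)(\mathfrak{t}_{t+1}-\mathfrak{t}_t)+\sigma^d(\mathfrak{t}_t,x)y$, approximate each entry of the matrix--vector product via $\mathfrak{n}_{\varepsilon,M}$ with $M\sim d^{q+1/2}\varepsilon^{-\beta}$, and read off $\zeta=\theta=\beta+\tilde\zeta$ from $\mathrm{Lip}(\mathfrak{n}_{\varepsilon,M})\leq cM$ composed with $\mathrm{Lip}(\sigma_{\varepsilon,d,t,i})\leq Cd^q\varepsilon^{-\tilde\zeta}$. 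The only cosmetic difference is that the paper does not rescale to $\varepsilon'=\varepsilon d^{-O(1)}$ but simply absorbs the factor $d^{3/2}$ from the summed product errors into the polynomial-in-$d$ constant in \eqref{eq:fapproxBdd}.
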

\begin{proof} 
Firstly, \eqref{eq:MarkovUpdate} holds with $f^d_{t}(x,y) = x + \mu^d(\mathfrak{t}_t,x) (\mathfrak{t}_{t+1}-\mathfrak{t}_t) + \sigma^d(\mathfrak{t}_t,x)y$ and $Y_t^d= W^d_{\mathfrak{t}_{t+1}}-W^d_{\mathfrak{t}_t}$.
Assumption~\ref{ass:Markov}(ii) holds by the independent increment property of Brownian motion.  

Next, for all $d \in \N$, $t \in \{0,\ldots,T-1\}$ it holds that $\|f_t^d(0,0)\| = \|\mu^d(\mathfrak{t}_t,0) (\mathfrak{t}_{t+1}-\mathfrak{t}_t) \| \leq C \bar{T} d^q$ and, with $\bar{p}= m\max{(2,p)}$, 
$\E[\|Y_{t}^d\|^{2\bar{p}}]  \leq \bar{T}^{\bar{p}} \E[\|Z\|^{2\bar{p}}]$ for $Z$ standard normally distributed in $\R^d$. The fact that $\|Z\|^2 \sim \chi^2(d)$ and the upper and lower bounds for the gamma function (see, e.g., \cite[Lemma~2.4]{Gonon2019}) thus yield
\begin{equation}
\label{eq:auxEq59}
\begin{aligned}
\E[\|Y_{t}^d\|^{2\bar{p}}]  & \leq \bar{T}^{\bar{p}}  \frac{2^{\bar{p}}\Gamma(\bar{p}+\frac{d}{2})}{\Gamma(\frac{d}{2})} \leq \bar{T}^{\bar{p}} 2^{\bar{p}} \left(\frac{e}{\frac{d}{2}} \right)^{\frac{d}{2}} \left(\frac{\bar{p}+\frac{d}{2}}{e}\right)^{\bar{p}+\frac{d}{2}} e \\ & \leq  [(4\bar{T}\bar{p})^{\bar{p}} c_{\bar{p}}  e^{1-\bar{p}}] d^{\bar{p}}
\end{aligned}
\end{equation}
with $c_{\bar{p}} = \max_{n \in \N} \left(1+\frac{2\bar{p}}{n} \right)^{\frac{n}{2}} < \infty$. 

Next, for $\varepsilon \in (0,1]$, $d \in \N$, $t \in \{0,\ldots,T-1\}$ let $M=4 \max(C,1) d^{q+\frac{1}{2}} \varepsilon^{-\beta}$ and consider  $\eta_{\varepsilon,d,t} \colon \R^d \times \R^d \to \R^d$ given by
\begin{equation}\label{eq:auxEq54}
\eta_{{\varepsilon},d,t,i}(x,y) = x_i +\mu_{\varepsilon,d,t,i}(x) (\mathfrak{t}_{t+1}-\mathfrak{t}_t) + \sum_{j=1}^d {\mathfrak{n}}_{\varepsilon,M}(\sigma_{\varepsilon,d,t,i,j}(x),y_j)
\end{equation}
for $i=1,\ldots,d$ with ${\mathfrak{n}}_{\varepsilon,M}$ from Lemma~\ref{lem:productrefined}. By using the operations of parallelization and concatenation it follows that we can realize $(x,y)\mapsto {\mathfrak{n}}_{\varepsilon,M}(\sigma_{\varepsilon,d,t,i,j}(x),y_j)$ by a neural network of size $\mathfrak{s}_{i,j} := 2(\mathrm{size}(\mathfrak{n}_{\varepsilon,M})+2+\mathrm{size}(\sigma_{\varepsilon,d,t,i,j}))$, see, e.g., \citet[Propositions~2.2 and 2.3]{Opschoor2020}.  Recall that the identity on $\R$ can be realized by a ReLU deep neural network of arbitrary depth $\ell$ and size $2 \ell$ (see \cite[Remark~2.4]{PETERSEN2018296}, \cite[Proposition~2.4]{Opschoor2020}). Thus, we may insert identity networks in \eqref{eq:auxEq54} to ensure that all summands can be realized by networks of the same depth, which is at most $\max(\mathrm{size}(\mu_{\varepsilon,d,t,i}),\mathfrak{s}_{i,1},\ldots,\mathfrak{s}_{i,d})$ due to the fact that the depth of a network is bounded by its size. By applying the summing operation for neural networks of equal depth (see, e.g., \citet[Lemma~3.2]{GS20_925}) it follows that 
$\eta_{{\varepsilon},d,t,i}$ can be realized by a deep neural network with 
\[
\begin{aligned}
\mathrm{size}(\eta_{\varepsilon,d,t,i}) & \leq 2\left(2 + \mathrm{size}(\mu_{\varepsilon,d,t,i})+\sum_{j=1}^d\mathfrak{s}_{i,j}\right)+4 \max(\mathrm{size}(\mu_{\varepsilon,d,t,i}),\mathfrak{s}_{i,1},\ldots,\mathfrak{s}_{i,d})
\\ 
& 
\leq 12\left(1 + C d^q \varepsilon^{-\tilde{\alpha}} +d(c(\log(\varepsilon^{-1})+\log(M)+1)+2)\right). 
\end{aligned}
\]
Next, we use Assumption~\ref{ass:driftdiffusion} to estimate 
\[ 
\|\sigma_{\varepsilon,d,t}(x)\|_F  \leq \|\sigma_{\varepsilon,d,t}(x)-\sigma^d(\mathfrak{t}_t,x)\|_F + \|\sigma^d(\mathfrak{t}_t,x)\|_F \leq  2 C d^q (1 + \|x\|)	
\]
and thus for $x \in [-(\varepsilon^{-\beta}),\varepsilon^{-\beta}]^d$ it follows that 
$\|\sigma_{\varepsilon,d,t}(x)\|_F \leq 2 C d^q(1+\sqrt{d}{\varepsilon^{-\beta}}) \leq M$. 
Hence, Assumption~\ref{ass:driftdiffusion} and \eqref{eq:productApprox} imply for $x,y \in [-(\varepsilon^{-\beta}),\varepsilon^{-\beta}]^d$ that 
\[
\begin{aligned}
& \| f^d_{t}(x,y) - \eta_{\varepsilon,d,t}(x,y)\| 
\\ & \quad \leq \| \mu^d(\mathfrak{t}_t,x)-\mu_{\varepsilon,d,t}(x)\| (\mathfrak{t}_{t+1}-\mathfrak{t}_t) + \left(\sum_{i=1}^d \left|(\sigma^d(\mathfrak{t}_t,x)y)_i - \sum_{j=1}^d {\mathfrak{n}}_{\varepsilon,M}(\sigma_{\varepsilon,d,t,i,j}(x),y_j)\right|^2\right)^{1/2}
\\ & \quad \leq \varepsilon C \bar{T} d^q (1+\|x\|)  + \|\sigma^d(\mathfrak{t}_t,x)y-\sigma_{\varepsilon,d,t}(x)y\| 
\\ & \quad \quad + \left(\sum_{i=1}^d \left|\sum_{j=1}^d \sigma_{\varepsilon,d,t,i,j}(x)y_j - {\mathfrak{n}}_{\varepsilon,M}(\sigma_{\varepsilon,d,t,i,j}(x),y_j)\right|^2\right)^{1/2}
\\ & \quad \leq \varepsilon C \bar{T} d^q (1+\|x\|)  + \|\sigma^d(\mathfrak{t}_t,x)-\sigma_{\varepsilon,d,t}(x)\|_F \|y\| + d^{\frac{3}{2}} \varepsilon
\\ & \quad \leq \varepsilon (C \bar{T} d^q + d^{\frac{3}{2}}) (1+\|x\|)  + \varepsilon C  d^q (1+\|x\|) \|y\| 
\\ & \quad \leq 2 \varepsilon (C (\bar{T}+1) d^q + d^{\frac{3}{2}}) (1+\|x\|^2+\|y\|^2).  
\end{aligned}
\]
Furthermore, Assumption~\ref{ass:driftdiffusion} and the Lipschitz property of ${\mathfrak{n}}_{\varepsilon,M}$ yield for all $x,x',y,y' \in \R^d$
\[
\begin{aligned}
\| \eta_{\varepsilon,d,t}(x,y) - \eta_{\varepsilon,d,t}(x',y')\| & \leq \|x-x'\| +\|\mu_{\varepsilon,d,t}(x)-\mu_{\varepsilon,d,t}(x')\| (\mathfrak{t}_{t+1}-\mathfrak{t}_t) 
\\ & \quad + \left(\sum_{i=1}^d \left|\sum_{j=1}^d {\mathfrak{n}}_{\varepsilon,M}(\sigma_{\varepsilon,d,t,i,j}(x'),y_j') - {\mathfrak{n}}_{\varepsilon,M}(\sigma_{\varepsilon,d,t,i,j}(x),y_j)\right|^2\right)^{\frac{1}{2}}
\\
& \leq (1+C d^q \varepsilon^{-\tilde{\zeta}} \bar{T}) \|x-x'\|
\\ & \quad + \left(\sum_{i=1}^d \left|\sum_{j=1}^d Mc(|\sigma_{\varepsilon,d,t,i,j}(x')-\sigma_{\varepsilon,d,t,i,j}(x)| +|y_j-y_j'|) \right|^2\right)^{\frac{1}{2}}
\\ & \leq 8 \max(C,1)^2 d^{2q+2} \varepsilon^{-\beta-\tilde{\zeta}} c(1+C \bar{T})( \|x-x'\| +\|y-y'\|).
\end{aligned}
\]
Finally, for all $x,y \in \R^d$
\[
\begin{aligned}
\E[\|f_t^d(x,Y_t^d)\|^{2\bar{p}}]^{\frac{1}{2\bar{p}}} & \leq \|x\| + \|\mu^d(\mathfrak{t}_t,x)\|\bar{T} + \E[\|\sigma^d(\mathfrak{t}_t,x)Y_t^d\|^{2\bar{p}}]^{\frac{1}{2\bar{p}}}
\\ & \leq (1+C\bar{T}d^q)(1+\|x\|) + Cd^q(1+\|x\|) \E[\|Y_t^d\|^{2\bar{p}}]^{\frac{1}{2\bar{p}}}
\end{aligned}
\]
so that \eqref{eq:auxEq59} implies a polynomial growth bound \eqref{eq:fGrowthbdd} and the estimate
\[
\begin{aligned}
\|\eta_{\varepsilon,d,t}(x,y)\| &  = \| \eta_{\varepsilon,d,t}(x,y)-\eta_{\varepsilon,d,t}(0,0)\|+\|\eta_{\varepsilon,d,t}(0,0)-f_t^d(0,0)\| + \|f_t^d(0,0)\| 
\end{aligned}
\]
combined with the Lipschitz, growth and approximation properties that we already established implies a polynomial bound \eqref{eq:etaGrowthbdd}
with $\theta = \beta + \tilde{\zeta}$. 

Altogether, this proves that Assumption~\ref{ass:Markov2} is satisfied with the claimed choices of $\zeta$ and $\theta$.
\end{proof}

\begin{proof}[Proof of Proposition~\ref{prop:max}]
 
Consider first the case of the running minimum. For $z \in \R^d$ partition $z = (z_{1:d-1},z_d)$ into the first $d-1$ and the last component. Define the transition map for the augmented process, $\bar{f}^d_t \colon \R^{d} \times \R^{d} \to \R^{d}$, by 
	\[\bar{f}^d_t(x,y)=(f^{d-1}_t(x_{1:d-1},y_{1:d-1}),\min(\min_{j=1,\ldots,d-1} f^{d-1}_{t,j}(x_{1:d-1},y_{1:d-1}),x_d))
	\] and $\bar{Y}^d_t = (Y^{d-1}_{t},0)$.
	Then $\bar{X}^{d}_0 =(X^{d-1}_0,\min_{i=1,\ldots,d-1}X^{d-1}_{0,i}) $ and so the independence and moment conditions on $\bar{Y}^d$ are satisfied  and $\|\bar{f}_t^d(0,0)\| \leq  2\|f^{d-1}_t(0,0)\|$.
	Thus, (i), (ii) and (iv) in Assumption~\ref{ass:Markov} are satisfied.
	
	Furthermore, by the identity $x=x^+ - (-x)^+$ and  \cite[Lemma~4.12]{HornungJentzen2018} the function $\mathfrak{min}_k \colon \R^{k} \to \R$, $z \mapsto \min_{j=1,\ldots,k} z_j$ can be realized by a deep neural network with size at most $12k^3$. We now set 
	\[ 
	\bar{\eta}_{\varepsilon,d,t}(x,y) = (\eta_{{\varepsilon},d-1,t}(x_{1:d-1},y_{1:d-1}),\mathfrak{min}_{2}(\mathfrak{min}_{d-1}(\eta_{{\varepsilon},d-1,t}(x_{1:d-1},y_{1:d-1})),x_d)).
	\]
	Then the $1$-Lipschitz property of $\mathfrak{min}_k$, which follows from the fact that the pointwise minimum of $1$-Lipschitz functions is again $1$-Lipschitz, implies that $\mathrm{Lip}(\bar{\eta}_{\varepsilon,d,t}) \leq \sqrt{2} \mathrm{Lip}({\eta}_{\varepsilon,d-1,t})$ and 
	$\| \bar{f}^d_{t}(x,y) - \bar{\eta}_{\varepsilon,d,t}(x,y)\| \leq \sqrt{2}\| f^{d-1}_{t}(x_{1:d-1},y_{1:d-1}) - \eta_{\varepsilon,d-1,t}(x_{1:d-1},y_{1:d-1})\|$. The bound on $\mathrm{size}(\bar{\eta}_{\varepsilon,d,t})$ follows from the bound on $\mathrm{size}(\eta_{\varepsilon,d-1,t})$ and bounds for the operations composition, parallelization and the realization of the identity (which yields a bound for the size of the neural network realizing $x \mapsto x_{1:d}$). Finally, $\|\bar{f}_t^d(x,y)\| \leq \sqrt{2} \|f_t^{d-1}(x_{1:d-1},y_{1:d-1})\|$ and  $\|\bar{\eta}_{\varepsilon,d,t}(x,y)\| \leq \sqrt{2} \|\eta_{\varepsilon,d-1,t}(x_{1:d-1},y_{1:d-1})\|$ so that all the required bounds follow from the corresponding properties of $X^{d-1}$. 
	
	In the case of the running maximum one proceeds analogously, except that the growth bounds are now a bit different, namely, $\|\bar{f}_t^d(x,y)\| \leq d \|f_t^{d-1}(x_{1:d-1},y_{1:d-1})\|+\|x\|$ and  $\|\bar{\eta}_{\varepsilon,d,t}(x,y)\| \leq d \|\eta_{\varepsilon,d-1,t}(x_{1:d-1},y_{1:d-1})\|+\|x\|$, which still allows us to deduce the claimed statement. 
\end{proof}	

\subsection{Auxiliary results}
\label{subsec:auxiliary}
This section contains auxiliary results that are needed for the proof of Theorems~\ref{thm:DNNapprox} and \ref{thm:refined}. We start with Lemma~\ref{lem:ggrowth}, which establishes growth properties of the payoff function and its neural network approximation. 

\begin{lemma}\label{lem:ggrowth}
Suppose Assumption~\ref{ass:Payoff} is satisfied. Then for all $\varepsilon \in (0,1]$, $d \in \N$, $t \in \{0,\ldots,T\}$, $x \in \R^d$ it holds that
\begin{align}
\label{eq:gGrowth}
|g_d(t,x)| & \leq  c d^{q} (1+\|x\|),
\\ \label{eq:phiGrowth}
|\phi_{\varepsilon,d,t}(x)| & \leq  c d^{q} (2+\|x\|).
\end{align}
\end{lemma}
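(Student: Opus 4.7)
The plan is to first upgrade the Lipschitz estimate in Assumption~\ref{ass:Payoff}(i) for $\phi_{\varepsilon,d,t}$ into a Lipschitz estimate for $g_d(t,\cdot)$, and then to derive both growth bounds by a triangle inequality anchored at $x=0$, using $|g_d(t,0)|\leq cd^q$ from Assumption~\ref{ass:Payoff}(ii).

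For the first step, I fix $d\in\N$ and $t\in\{0,\ldots,T\}$ and observe that the approximation bound \eqref{eq:NNclose} gives, for every fixed $z\in\R^d$,
\[
|g_d(t,z)-\phi_{\varepsilon,d,t}(z)|\leq \varepsilon\, cd^q(1+\|z\|^p)\xrightarrow[\varepsilon\downarrow 0]{} 0,
\]
so $\phi_{\varepsilon,d,t}(z)\to g_d(t,z)$ pointwise as $\varepsilon\downarrow 0$. Since \eqref{eq:NNlipschitz} yields $|\phi_{\varepsilon,d,t}(x)-\phi_{\varepsilon,d,t}(y)|\leq cd^q\|x-y\|$ uniformly in $\varepsilon$, passing to the limit preserves the bound, giving
$|g_d(t,x)-g_d(t,y)|\leq cd^q\|x-y\|$ for all $x,y\in\R^d$.

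The estimate \eqref{eq:gGrowth} then follows immediately from the triangle inequality and Assumption~\ref{ass:Payoff}(ii):
\[
|g_d(t,x)|\leq |g_d(t,x)-g_d(t,0)|+|g_d(t,0)|\leq cd^q\|x\|+cd^q=cd^q(1+\|x\|).
\]
For \eqref{eq:phiGrowth} I first bound $|\phi_{\varepsilon,d,t}(0)|$ by triangulating through $g_d(t,0)$, using \eqref{eq:NNclose} at $x=0$ (with $\|0\|^p=0$, interpreting the convention in the natural way, and using $\varepsilon\leq 1$) together with Assumption~\ref{ass:Payoff}(ii):
\[
|\phi_{\varepsilon,d,t}(0)|\leq |\phi_{\varepsilon,d,t}(0)-g_d(t,0)|+|g_d(t,0)|\leq \varepsilon cd^q+cd^q\leq 2cd^q.
\]
Then the Lipschitz bound \eqref{eq:NNlipschitz} applied to $\phi_{\varepsilon,d,t}(x)-\phi_{\varepsilon,d,t}(0)$ yields
\[
|\phi_{\varepsilon,d,t}(x)|\leq cd^q\|x\|+|\phi_{\varepsilon,d,t}(0)|\leq cd^q\|x\|+2cd^q=cd^q(2+\|x\|),
\]
which is \eqref{eq:phiGrowth}.

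There is no real obstacle: the only slightly delicate step is the transfer of a uniform Lipschitz constant from the sequence $\phi_{\varepsilon,d,t}$ to the limit function $g_d(t,\cdot)$, but this is standard once pointwise convergence is known. Everything else is a direct triangle-inequality manipulation with constants taken from Assumption~\ref{ass:Payoff}.
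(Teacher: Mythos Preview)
Your proof is correct and follows essentially the same approach as the paper: both arguments anchor at $x=0$ via the triangle inequality, use the Lipschitz bound \eqref{eq:NNlipschitz} together with \eqref{eq:NNclose} at the origin and Assumption~\ref{ass:Payoff}(ii), and let $\varepsilon\downarrow 0$ to remove the approximation term. The only cosmetic difference is that you first isolate the Lipschitz continuity of $g_d(t,\cdot)$ as a separate fact before bounding $|g_d(t,x)|$, whereas the paper carries the auxiliary parameter $\bar\varepsilon$ through the full chain of inequalities and passes to the limit at the end.
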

\begin{proof}
First note that from 	\eqref{eq:NNclose}, \eqref{eq:NNlipschitz} and the growth assumption on $g_d$ we obtain for every $\bar{\varepsilon} \in (0,1]$ that
\begin{equation}
\label{eq:auxEq2}
\begin{aligned}
|g_d(t,x)| & \leq |g_d(t,x)-\phi_{\bar{\varepsilon},d,t}(x)| + |\phi_{\bar{\varepsilon},d,t}(x)-\phi_{\bar{\varepsilon},d,t}(0)|+|\phi_{\bar{\varepsilon},d,t}(0)-g_d(t,0)|+|g_d(t,0)|
\\
& \leq  \bar{\varepsilon} c d^{q} (2+\|x\|^p) + c d^{q} (1+\|x\|).
\end{aligned}
\end{equation}
Letting $\bar{\varepsilon}$ tend to $0$ we thus obtain \eqref{eq:gGrowth}. 
Next, note that 
the same properties of $g_d$ and $\phi_{{\varepsilon},d,t}$ imply 
		\begin{equation}
		\label{eq:phiGrowth1}
		\begin{aligned}
		|\phi_{\varepsilon,d,t}(x)| & \leq  |\phi_{\varepsilon,d,t}(x)-\phi_{\varepsilon,d,t}(0)|+|\phi_{\varepsilon,d,t}(0)-g_d(t,0)|+|g_d(t,0)| \leq   c d^{q} (2+\|x\|).
		\end{aligned}
		\end{equation}
\end{proof}

The next result, Lemma~\ref{lem:growth}, establishes growth properties of the Markov update function and its neural network approximation.  

\begin{lemma}\label{lem:growth}
Suppose Assumption~\ref{ass:Markov} is satisfied. Then for all $\varepsilon \in (0,1]$, $d \in \N$, $t \in \{0,\ldots,T-1\}$, $x,y \in \R^d$ it holds that
\begin{align}
\label{eq:fGrowth}
\|f_t^d(x,y)\| & \leq  c d^{q} (1+\|x\|+\|y\|),
\\ \label{eq:etaGrowth}
\|\eta_{\varepsilon,d,t}(x,y)\| & \leq   c d^{q} (2+\|x\|+\|y\|).
\end{align}
\end{lemma}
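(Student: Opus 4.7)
The plan is to mirror the proof strategy of Lemma~\ref{lem:ggrowth}, using the triangle inequality to route through the neural network approximant and its value at the origin, then exploiting Assumption~\ref{ass:Markov}(iii)--(iv). The constants $c,q$ in the statement may need to be increased compared to those in Assumption~\ref{ass:Markov}, but this is harmless.

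For \eqref{eq:fGrowth}, fix $\bar{\varepsilon} \in (0,1]$ and apply the triangle inequality
\[
\|f_t^d(x,y)\| \leq \|f_t^d(x,y)-\eta_{\bar{\varepsilon},d,t}(x,y)\| + \|\eta_{\bar{\varepsilon},d,t}(x,y)-\eta_{\bar{\varepsilon},d,t}(0,0)\| + \|\eta_{\bar{\varepsilon},d,t}(0,0)-f_t^d(0,0)\| + \|f_t^d(0,0)\|.
\]
The first and third terms are bounded by \eqref{eq:fapprox} by $\bar{\varepsilon} c d^q(1+\|x\|^p+\|y\|^p)$ and $\bar{\varepsilon} c d^q$, respectively; the second term is bounded by \eqref{eq:etaLipschitz} by $c d^q(\|x\|+\|y\|)$; and the last term is bounded by $c d^q$ by Assumption~\ref{ass:Markov}(iv). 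Combining these gives
\[
\|f_t^d(x,y)\| \leq \bar{\varepsilon}\, c d^q (2+\|x\|^p+\|y\|^p) + c d^q(1+\|x\|+\|y\|).
\]
Since the left-hand side is independent of $\bar{\varepsilon}$, letting $\bar{\varepsilon}\to 0$ yields \eqref{eq:fGrowth} (possibly after renaming $c$).

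For \eqref{eq:etaGrowth}, I use the same three-term decomposition but without an auxiliary parameter, since $\varepsilon$ is now fixed:
\[
\|\eta_{\varepsilon,d,t}(x,y)\| \leq \|\eta_{\varepsilon,d,t}(x,y)-\eta_{\varepsilon,d,t}(0,0)\| + \|\eta_{\varepsilon,d,t}(0,0)-f_t^d(0,0)\| + \|f_t^d(0,0)\|.
\]
The Lipschitz bound \eqref{eq:etaLipschitz} handles the first term by $c d^q(\|x\|+\|y\|)$; the approximation bound \eqref{eq:fapprox} at $(0,0)$ and the constraint $\varepsilon \leq 1$ give the second term at most $c d^q$; Assumption~\ref{ass:Markov}(iv) handles the third. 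Summing yields \eqref{eq:etaGrowth} with an adjusted constant.

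There is no substantive obstacle: both bounds are pure triangle-inequality arguments. The only mild point to be careful about is that the approximation error in \eqref{eq:fapprox} has $\|x\|^p$ and $\|y\|^p$ factors (not $\|x\|,\|y\|$), so for \eqref{eq:fGrowth} one cannot directly absorb this into the linear-growth bound for a fixed $\bar{\varepsilon}$; taking $\bar{\varepsilon}\to 0$ removes this polynomial dependence. For \eqref{eq:etaGrowth} the analogous issue does not arise because $\varepsilon$ is fixed and the $\|x\|^p,\|y\|^p$ term only appears at the origin, contributing the constant $c d^q$.
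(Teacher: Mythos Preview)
Your proof is correct and essentially identical to the paper's: the same four-term and three-term triangle-inequality decompositions, the same application of \eqref{eq:fapprox}, \eqref{eq:etaLipschitz}, and Assumption~\ref{ass:Markov}(iv), and the same $\bar{\varepsilon}\to 0$ limit for \eqref{eq:fGrowth}. The only cosmetic difference is that no renaming of $c$ is actually needed---the bounds come out exactly as $c d^q(1+\|x\|+\|y\|)$ and $c d^q(2+\|x\|+\|y\|)$.
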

\begin{proof}
The proof is a straightforward consequence of
\eqref{eq:fapprox}, Assumption~\ref{ass:Markov}(iv) and \eqref{eq:etaLipschitz}. Indeed, these hypotheses imply for every $\bar{\varepsilon} \in (0,1]$ that
\begin{equation}
\label{eq:auxEq6}
\begin{aligned}
\|f_t^d(x,y)\| & \leq \|f_t^d(x,y)-\eta_{\bar{\varepsilon},d,t}(x,y)\| + \|\eta_{\bar{\varepsilon},d,t}(x,y)-\eta_{\bar{\varepsilon},d,t}(0,0)\|
\\ & \quad \quad +\|\eta_{\bar{\varepsilon},d,t}(0,0)-f_t^d(0,0)\|+\|f_t^d(0,0)\|
\\
& \leq  \bar{\varepsilon} c d^{q} (2+\|x\|^p + \|y\|^p) + c d^{q} (1+ \|x\| + \|y\|).
\end{aligned}
\end{equation}
Letting $\bar{\varepsilon}$ tend to $0$ we thus obtain \eqref{eq:fGrowth}. 
In addition, the same hypotheses yield
\begin{equation}
\label{eq:etaGrowth1}
\begin{aligned}
\|\eta_{\varepsilon,d,t}(x,y)\| & \leq  \|\eta_{{\varepsilon},d,t}(x,y)-\eta_{{\varepsilon},d,t}(0,0)\|+\|\eta_{{\varepsilon},d,t}(0,0)-f_t^d(0,0)\|+\|f_t^d(0,0)\| \\ & \leq   c d^{q} (2+\|x\|+\|y\|).
\end{aligned}
\end{equation}
\end{proof}

In the next lemma we establish a bound on the conditional moments of $X^d$. The proof and $\E[\|X_0^d\|]<\infty$ also yield $\E[\|X_t^d\|]<\infty$ for all $t$ and so  we may consider the conditional expectation in \eqref{eq:condMomentGrowth2} to be defined for all $x \in \R^d$, cf. Remark~\ref{rmk:condExpDefined} above.
\begin{lemma}
\label{lem:growthMoments}
Suppose Assumption~\ref{ass:Markov} or \ref{ass:Markov2} is satisfied. Then for all $d \in \N$, $x \in \R^d$, $s,t \in \{0,\ldots,T\}$ with $s \geq t$ it holds that
\begin{equation}
\begin{aligned}
\label{eq:condMomentGrowth2}
\E[\|X^d_s\| |X^d_t=x]  \leq  \tilde{c}_1 d^{\tilde{q}_1} (1+\|x\|) 
\end{aligned}
\end{equation}
with $\tilde{c}_1 = 2\max(c,1)^{T+1} T$, $\tilde{q}_1 = q(T+1)$ in the case of Assumption~\ref{ass:Markov} and with $\tilde{c}_1 = T\max(h,1)^{\frac{T}{2m\max(p,2)}}$, $\tilde{q}_1 = \frac{\bar{q}T}{2m\max(p,2)}$ in the case of Assumption~\ref{ass:Markov2}.  
\end{lemma}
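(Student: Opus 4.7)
The plan is to apply the tower property combined with the Markov structure, obtain a linear recursion in $s$ for $M_s := \E[\|X^d_s\| \mid X^d_t = x]$, and iterate. The key observation is that, since $Y^d_s$ is independent of $\Fc_s$ by Assumption~\ref{ass:Markov}(ii), the identity $X^d_{s+1} = f^d_s(X^d_s,Y^d_s)$ together with Fubini gives $\E[\|X^d_{s+1}\| \mid \Fc_s] = \phi_s(X^d_s)$, where $\phi_s(z) := \E[\|f^d_s(z,Y^d_s)\|]$. Applying the Markov property then yields $M_{s+1} = \E[\phi_s(X^d_s) \mid X^d_t = x]$. The $L^1$-integrability needed to justify these manipulations follows inductively from $\E[\|X^d_0\|]<\infty$ together with the polynomial growth bounds we are about to derive; the base case $M_t = \|x\|$ is immediate, and the unconditional version is obtained by running the same recursion with $\|x\|$ replaced by $\E[\|X^d_t\|]$.

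Under Assumption~\ref{ass:Markov}, I would combine Lemma~\ref{lem:growth} with Lyapunov's inequality applied to Assumption~\ref{ass:Markov}(iv) to obtain $\phi_s(z) \leq cd^q(1+\|z\|+\E[\|Y^d_s\|])$ and $\E[\|Y^d_s\|] \leq (cd^q)^{1/(2\max(p,2))} \leq \max(c,1) d^q$, where the last step absorbs the fractional exponent using $d \geq 1$ and $\max(c,1)\geq 1$. Writing $\alpha := \max(c,1) d^q \geq 1$, this yields the affine recursion $M_{s+1} \leq 2\alpha^2 + \alpha M_s$. Iterating $k := s-t$ steps from $M_t = \|x\|$ produces $M_s \leq \alpha^k\|x\| + 2\alpha^2(\alpha^k-1)/(\alpha-1)$, which I bound crudely by $2k\alpha^{k+1}(1+\|x\|)$ using the estimate $(\alpha^k-1)/(\alpha-1) \leq k\alpha^{k-1}$ valid for $\alpha \geq 1$. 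For $k \leq T$ this gives the claimed constants $\tilde{c}_1 = 2T\max(c,1)^{T+1}$ and $\tilde{q}_1 = q(T+1)$; the degenerate case $\alpha = 1$ is handled by summing a constant series.

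Under Assumption~\ref{ass:Markov2} the pointwise growth bound on $f^d_s$ is not available, so I would pass through the moment bound \eqref{eq:fGrowthbdd} instead. Jensen's inequality combined with the subadditivity $(a+b)^{1/r} \leq a^{1/r}+b^{1/r}$ for $r \geq 1$ gives
\[
\phi_s(z) \;\leq\; \bigl(h d^{\bar q}\bigr)^{1/(2m\max(p,2))}(1+\|z\|) \;=:\; \beta_2\,(1+\|z\|),
\]
yielding the cleaner recursion $M_{s+1} \leq \beta_2(1+M_s)$. Iterating from $M_t=\|x\|$ gives $M_s \leq \beta_2^k\|x\| + \beta_2(\beta_2^k-1)/(\beta_2-1) \leq T\beta_2^T(1+\|x\|)$ for $k = s-t\leq T$ when $\beta_2 \geq 1$, and a trivial bound otherwise, matching $\tilde{c}_1 = T\max(h,1)^{T/(2m\max(p,2))}$ and $\tilde{q}_1 = \bar{q}T/(2m\max(p,2))$.

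The proof is essentially a careful iteration, so the main point of care (rather than a deep obstacle) is tightening the geometric sum so that the factor $T$ appears only linearly, rather than exponentially, in the final constant; the elementary estimate $(\alpha^k-1)/(\alpha-1) \leq k\alpha^{k-1}$ for $\alpha \geq 1$ is precisely what achieves this in both regimes.
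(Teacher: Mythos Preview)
Your proposal is correct and follows essentially the same route as the paper's proof: both derive the affine recursion $M_{s+1}\leq a+bM_s$ from the one-step growth bound (Lemma~\ref{lem:growth} under Assumption~\ref{ass:Markov}, respectively Jensen applied to \eqref{eq:fGrowthbdd} under Assumption~\ref{ass:Markov2}), iterate it, and then bound the resulting geometric sum by $k$ times the top term. The paper writes out the iterated sum explicitly as \eqref{eq:condMomentGrowth} and \eqref{eq:condMomentGrowthAlternative} before bounding, whereas you keep the recursion compact, but the estimates and constants match.
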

\begin{proof}
Assume w.l.o.g.\ that $c \geq 1$. Consider first the case when Assumption~\ref{ass:Markov} holds. Then \eqref{eq:fGrowth} can be used to prove inductively that for all $s \geq t$, 
\begin{equation}
\label{eq:condMomentGrowth}
\E[\|X^d_s\| |X^d_t=x] \leq (c d^q)^{s-t} \|x\| + \sum_{k=1}^{s-t} (c d^{q})^{k} (1+\E[\|Y_{s-k}^d\|]).
\end{equation}
Indeed, for $s=t$ this directly follows from the definition. Assume now $s>t$ and \eqref{eq:condMomentGrowth} holds for $s-1,s-2,\ldots,t$, then \eqref{eq:fGrowth}, the induction hypothesis and independence yield
\begin{equation}
\begin{aligned}
\label{eq:auxEq24}
\E[\|X^d_s\| |X^d_t=x] &  = \E[\|f_{s-1}^d(X^d_{s-1},Y_{s-1}^d)\| |X^d_t=x] 
\\
& \leq c d^{q} (1+\E[\|X^d_{s-1}\| |X^d_t=x] +\E[\|Y_{s-1}^d\||X^d_t=x])
\\
& \leq c d^{q} \left(1+(c d^q)^{s-1-t} \|x\| + \sum_{k=1}^{s-1-t} (c d^{q})^{k} (1+\E[\|Y_{s-1-k}^d\|]) +\E[\|Y_{s-1}^d\|]\right)
\\
& =  c d^{q} (1+\E[\|Y_{s-1}^d\|]) + (c d^q)^{s-t} \|x\| +  \sum_{k=2}^{s-t} (c d^{q})^{k} (1+\E[\|Y_{s-k}^d\|]),
\end{aligned}
\end{equation}
as claimed. This shows that \eqref{eq:condMomentGrowth} holds for all $s\geq t$.   From \eqref{eq:condMomentGrowth} and Assumption~\ref{ass:Markov}(iv) we obtain
\begin{equation}
\begin{aligned}
\label{eq:condMomentGrowth22}
\E[\|X^d_s\| |X^d_t=x] & \leq c^T d^{Tq} \|x\| + \sum_{k=1}^{s-t} (c d^{q})^{k} (1+cd^q)
\\ & \leq  \tilde{c}_1 d^{\tilde{q}_1} (1+\|x\|).
\end{aligned}
\end{equation}
In the case when Assumption~\ref{ass:Markov2} holds we first note that independence, Jensen's inequality and \eqref{eq:fGrowthbdd} yield
\begin{equation}
\begin{aligned}
\label{eq:auxEq58}
\E[\|f_{s-1}^d(X^d_{s-1},Y_{s-1}^d)\| |X^d_t=x]  & = \int_{\R^d} \E[\|f_{s-1}^d(z,Y_{s-1}^d)\|] \mu_{t,s-1}^d(x,dz)
\\
 & \leq \int_{\R^d} (h d^{\bar{q}} (1+\|z\|^{2m\max(p,2)}))^{\frac{1}{2m\max(p,2)}} \mu_{t,s-1}^d(x,dz)
 \\
 & \leq (h d^{\bar{q}})^{\frac{1}{2m\max(p,2)}} \int_{\R^d} (1+\|z\|) \mu_{t,s-1}^d(x,dz)
 \\
& = (h d^{\bar{q}})^{\frac{1}{2m\max(p,2)}} (1+\E[\|X^d_{s-1}\| |X^d_t=x]).
\end{aligned}
\end{equation}
We can now apply this estimate instead of \eqref{eq:fGrowth} to get from the first to the second line in \eqref{eq:auxEq24} and arrive at 
\begin{equation}
\label{eq:condMomentGrowthAlternative}
\E[\|X^d_s\| |X^d_t=x] \leq ((h d^{\bar{q}})^{\frac{1}{2m\max(p,2)}})^{s-t} \|x\| + \sum_{k=1}^{s-t} ((h d^{\bar{q}})^{\frac{1}{2m\max(p,2)}})^{k},
\end{equation}
hence the conclusion follows. 
\end{proof}
The next result ensures that the optimal value \eqref{eq:optimalStoppingProblem} is finite in our setting. 
\begin{lemma}
\label{lem:expFinite}
Suppose Assumption~\ref{ass:Payoff} holds and Assumption~\ref{ass:Markov} or \ref{ass:Markov2} is satisfied. Then $\E[|g_d(t,X_t^d)|]< \infty$ for all $d \in \N$, $t \in \{0,\ldots,T\}$. 
\end{lemma}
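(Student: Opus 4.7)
The plan is to combine the polynomial growth bounds already established for the payoff and for the Markov process. The key point is that under either Assumption~\ref{ass:Markov} or Assumption~\ref{ass:Markov2}, all the ingredients needed to bound the first moment of $X_t^d$ are already available, and Lemma~\ref{lem:ggrowth} reduces the claim to such a moment bound.

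First, by Lemma~\ref{lem:ggrowth} (applied using Assumption~\ref{ass:Payoff}) we have the uniform linear growth bound
\[
|g_d(t,x)| \leq c d^{q}(1+\|x\|), \qquad x \in \R^d,
\]
so that, upon substituting $X_t^d$ and taking expectations,
\[
\E[|g_d(t,X_t^d)|] \leq c d^{q}(1 + \E[\|X_t^d\|]).
\]
Hence it suffices to prove that $\E[\|X_t^d\|] < \infty$ for every $d \in \N$ and $t \in \{0,\ldots,T\}$.

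For $t=0$ this is directly guaranteed by Assumption~\ref{ass:Markov}(ii), which is part of both Assumption~\ref{ass:Markov} and Assumption~\ref{ass:Markov2}. For $t \geq 1$, I would invoke Lemma~\ref{lem:growthMoments} with the pair of times $(0,t)$: it yields constants $\tilde{c}_1, \tilde{q}_1 \in [0,\infty)$ (differing according to whether Assumption~\ref{ass:Markov} or Assumption~\ref{ass:Markov2} is in force) such that
\[
\E[\|X_t^d\| \mid X_0^d = x] \leq \tilde{c}_1 d^{\tilde{q}_1}(1+\|x\|).
\]
Taking expectations with respect to the distribution of $X_0^d$ and using the tower property together with $\E[\|X_0^d\|]<\infty$ gives
\[
\E[\|X_t^d\|] \leq \tilde{c}_1 d^{\tilde{q}_1}\bigl(1+\E[\|X_0^d\|]\bigr) < \infty.
\]
Combining this with the initial inequality yields $\E[|g_d(t,X_t^d)|] < \infty$, which is what we wanted. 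There is no substantive obstacle here: the entire argument is a direct assembly of Lemma~\ref{lem:ggrowth}, Lemma~\ref{lem:growthMoments}, and the assumed integrability of $X_0^d$.
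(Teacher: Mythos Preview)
Your proposal is correct and follows essentially the same approach as the paper: both combine the linear growth bound from Lemma~\ref{lem:ggrowth}, the conditional moment estimate from Lemma~\ref{lem:growthMoments}, the tower property, and the integrability of $X_0^d$ from Assumption~\ref{ass:Markov}(ii). The only cosmetic difference is that the paper handles all $t$ in a single chain of inequalities rather than separating the case $t=0$.
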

\begin{proof}
Let  $d \in \N$, $t \in \{0,\ldots,T\}$. Then Lemma~\ref{lem:ggrowth},  Lemma~\ref{lem:growthMoments} and Assumption~\ref{ass:Markov} or \ref{ass:Markov2} ensure that
\[
\begin{aligned}
\label{eq:gGrowthExp}
\E[|g_d(t,X_t^d)|] & \leq  c d^{q} (1+\E[\|X_t^d\|]) =  c d^{q} (1+\E[\E[\|X_t^d\||X_0^d]])
\\ & \leq c d^{q} (1+\tilde{c}_1 d^{\tilde{q}_1} (1+\E[\|X_0^d\|]) ) < \infty.
\end{aligned}
\]
\end{proof}

The next lemma proves that the value function grows at most linearly. Recall from Remark~\ref{rmk:condExpDefined} that Lemma~\ref{lem:expFinite} allows us to recursively define the value function for all $x \in \R^d$ as the right hand side of \eqref{eq:recursionValueFunction}. 
\begin{lemma}
\label{lem:Vgrowth}
Suppose Assumption~\ref{ass:Payoff} holds and Assumption~\ref{ass:Markov} or \ref{ass:Markov2} is satisfied.
Then for all $d \in \N$, $t \in \{0,\ldots,T\}$, $x \in \R^d$ it holds that
\begin{align}
\label{eq:VGrowthLemma}
|V_d(t,x)| & \leq  \hat{c}_t d^{\hat{q}_t} (1+\|x\|),
\end{align}
where $\hat{c}_t = \max(c,1) (3\max(c,1)^2)^{T-t}$, $\hat{q}_t = q + 2q(T-t)$ in the case of Assumption~\ref{ass:Markov} and $\hat{c}_t = c (T+1) \max(h,1)^{\frac{T-t}{2m\max(p,2)}}$, $\hat{q}_t = q + \frac{\bar{q}(T-t)}{2m\max(p,2)}$ in the case of Assumption~\ref{ass:Markov2}.
\end{lemma}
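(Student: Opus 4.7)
The plan is to establish the bound by backward induction on $t \in \{T, T-1, \ldots, 0\}$. The base case $t=T$ is immediate: since $V_d(T,x) = g_d(T,x)$, Lemma~\ref{lem:ggrowth} yields $|V_d(T,x)| \leq c d^q (1+\|x\|)$, which is consistent with both claimed formulas (at $t=T$ they reduce to $\hat{c}_T = \max(c,1)$ or $\hat{c}_T = c(T+1)$, and $\hat{q}_T = q$, which dominate $c d^q$).

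For the inductive step, I would start from the recursion \eqref{eq:recursionValueFunction}. Using the elementary inequality $|\max(A,B)| \leq \max(|A|,|B|)$ together with the conditional Jensen inequality, I would write
\[
|V_d(t,x)| \;\leq\; \max\!\Bigl(|g_d(t,x)|,\; \E\bigl[|V_d(t+1,X_{t+1}^d)|\,\big|\,X_t^d=x\bigr]\Bigr).
\]
The first argument is handled by Lemma~\ref{lem:ggrowth}. For the second, I would apply the induction hypothesis and then control $\E[\|X_{t+1}^d\|\,|\,X_t^d=x]$ by a \emph{one-step} moment estimate (rather than invoking Lemma~\ref{lem:growthMoments} directly, whose constants are too crude). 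Under Assumption~\ref{ass:Markov}, Lemma~\ref{lem:growth} and $\E[\|Y_t^d\|] \leq \max(c,1) d^q$ (by Jensen from Assumption~\ref{ass:Markov}(iv)) give a bound of the form $\E[\|X_{t+1}^d\|\,|\,X_t^d=x] \leq 3\max(c,1)^2 d^{2q}(1+\|x\|)$. Under Assumption~\ref{ass:Markov2}, Jensen applied to \eqref{eq:fGrowthbdd} yields the one-step estimate $\E[\|X_{t+1}^d\|\,|\,X_t^d=x] \leq (h d^{\bar q})^{1/(2m\max(p,2))}(1+\|x\|)$, which is exactly the intermediate step established in \eqref{eq:auxEq58}.

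Combining these bounds, the induction step delivers a one-step recursion of the form $\hat{c}_t \geq \max\!\bigl(c,\, \kappa\,\hat{c}_{t+1}\bigr)$ and $\hat{q}_t \geq \max(q, \hat{q}_{t+1}+\delta)$, with $(\kappa,\delta) = (3\max(c,1)^2, 2q)$ under Assumption~\ref{ass:Markov} and $(\kappa,\delta) = (\max(h,1)^{1/(2m\max(p,2))}, \bar q/(2m\max(p,2)))$ under Assumption~\ref{ass:Markov2}. Iterating from $t=T$ down to $t$ gives the geometric factor $\kappa^{T-t}$ and the arithmetic growth $\delta(T-t)$ in the exponent of $d$, matching the claimed closed-form expressions. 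The only real obstacle is bookkeeping: ensuring that the extra additive $1$'s produced each step (from ``$1+\|x\|$'' and ``$1+\E[\|X\|\mid \cdot]$'') can be absorbed into the multiplicative $(1+\|x\|)$ factor without inflating $\hat c_t$ beyond the stated form, which is why in the refined case the constant is $c(T+1)$ rather than $c$ (so as to absorb a $T$ factor arising from iterated bounds).
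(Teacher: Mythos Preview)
Your proposal is correct and follows essentially the same route as the paper: backward induction from $t=T$, combining Lemma~\ref{lem:ggrowth} with a one-step conditional moment bound (Lemma~\ref{lem:growth} plus $\E[\|Y_t^d\|]$ under Assumption~\ref{ass:Markov}, and Jensen on \eqref{eq:fGrowthbdd} under Assumption~\ref{ass:Markov2}). One small point worth making explicit: for the Assumption~\ref{ass:Markov2} case, the paper carries out the ``bookkeeping'' you allude to by tracking two separate sequences, proving $|V_d(t,x)|\le \hat a_t + \hat b_t\|x\|$ with $\hat b_t = \kappa\,\hat b_{t+1}$ and $\hat a_t = \hat a_{t+1} + \kappa\,\hat b_{t+1}$; this is exactly what produces the additive $(T+1)$ factor in $\hat c_t$ while keeping the multiplicative growth at $\kappa^{T-t}$, whereas a single-sequence recursion $\hat c_t(1+\|x\|)$ would pick up an extra geometric factor from the stray ``$1+$'' at each step.
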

\begin{proof} Consider first the case of Assumption~\ref{ass:Markov}.
The proof proceeds by backward induction. For $t=T$ the statement directly follows from \eqref{eq:gGrowth}. Assume now the statement holds for $t+1$. Then \eqref{eq:recursionValueFunction}, \eqref{eq:gGrowth}, the induction hypothesis and \eqref{eq:condMomentGrowth} yield
\begin{equation}
\label{eq:recursionValueFunctionLemma}
\begin{aligned} |V_d(t,x)| & \leq \max(|g_d(t,x)|,\E[|V_d(t+1,X_{t+1}^d)||X_t^d=x])
\\ &  \leq \max(c d^{q} (1+\|x\|),\hat{c}_{t+1} d^{\hat{q}_{t+1}} (1+\E[\|X_{t+1}^d\||X_t^d=x]))
\\ & \leq  \max(c d^{q} (1+\|x\|),\hat{c}_{t+1} d^{\hat{q}_{t+1}} (1+cd^q(1+\|x\|+cd^q)))
\\ & \leq  \hat{c}_{t+1} d^{\hat{q}_{t+1}} 3\max(c,1)^2d^{2q}(1+\|x\|).
\end{aligned}
\end{equation}
Hence, \eqref{eq:VGrowthLemma} also holds at $t$ and so by induction the statement follows. 

In the case of Assumption~\ref{ass:Markov2} we aim to provide a tighter estimate and instead inductively prove that $|V_d(t,x)| \leq \hat{a}_{t} + \hat{b}_t\|x\|$ with $\hat{a}_t = \hat{a}_{t+1} + \hat{b}_{t+1} (h d^{\bar{q}})^{\frac{1}{2m\max(p,2)}}$, $\hat{a}_T = c d^q$, $\hat{b}_t = \hat{b}_{t+1} (\max(h,1) d^{\bar{q}})^{\frac{1}{2m\max(p,2)}}$,  $\hat{b}_T = c d^q$.
Indeed,  using \eqref{eq:condMomentGrowthAlternative} instead of \eqref{eq:condMomentGrowth}  we analogously obtain 
\begin{equation} 
\label{eq:recursionValueFunctionLemma2}
\begin{aligned} |V_d(t,x)|  & \leq  \max(c d^{q} + c d^{q}\|x\| ,\hat{a}_{t+1} + \hat{b}_{t+1} (h d^{\bar{q}})^{\frac{1}{2m\max(p,2)}}(1+\|x\|)) \leq \hat{a}_{t} + \hat{b}_t\|x\|
\end{aligned}
\end{equation}
from which the statement follows by noting that $\hat{b}_t = cd^q (\max(h,1) d^{\bar{q}})^{\frac{T-t}{2m\max(p,2)}}$,
\[
\hat{a}_t = cd^q + \sum_{s=t+1}^T \hat{b}_{s} (h d^{\bar{q}})^{\frac{1}{2m\max(p,2)}} \leq  cd^q (T+1) (\max(h,1) d^{\bar{q}})^{\frac{T-t}{2m\max(p,2)}}. 
\]
\end{proof}

Lemma~\ref{lem:nnfixed} mathematically proves the intuitively obvious fact that a neural network in which some input arguments are held
at fixed values is still a neural network with at most as many non-zero parameters as the original neural network. 
\begin{lemma}
	\label{lem:nnfixed}	
	Let $d_0,d_1,m \in \N$ and let $\phi \colon \R^{d_0+d_1} \to \R^m$ be a neural network. Let $y \in \R^{d_1}$. Then $\Phi_y \colon \R^{d_0} \to \R^m$, $x \mapsto \phi((x,y))$ can again be realized by a neural network $\phi_y$ with $\mathrm{size}(\phi_y) \leq  \mathrm{size}(\phi)$.
\end{lemma}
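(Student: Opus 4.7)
The plan is to absorb the fixed inputs $y$ into the bias of the first layer of the network, leaving all subsequent layers unchanged. Write $\phi$ in the form of \eqref{eq:DNNdef} with layers $\mathcal{W}_\ell(z)=A^\ell z+b^\ell$, $\ell=1,\ldots,L$, and in particular partition the first weight matrix column-wise as $A^1=[A^1_x \mid A^1_y]$ with $A^1_x\in\R^{N_1\times d_0}$ and $A^1_y\in\R^{N_1\times d_1}$. Then for any $x\in\R^{d_0}$,
\[
\mathcal{W}_1((x,y))=A^1_x x+A^1_y y+b^1=\tilde A^1 x+\tilde b^1,
\]
where $\tilde A^1:=A^1_x$ and $\tilde b^1:=A^1_y y+b^1$. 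Define $\phi_y$ to be the neural network with $L$ layers, input dimension $d_0$, first-layer parameters $(\tilde A^1,\tilde b^1)$ and remaining parameters $(A^\ell,b^\ell)$ for $\ell=2,\ldots,L$ unchanged. By construction $\phi_y(x)=\phi((x,y))=\Phi_y(x)$ for all $x\in\R^{d_0}$.

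It remains to verify the size bound. Since layers $\ell\geq 2$ are identical, it suffices to compare the number of nonzero entries in $(\tilde A^1,\tilde b^1)$ with those in $(A^1,b^1)$. Clearly dropping the last $d_1$ columns of $A^1$ can only reduce the count of nonzeros, so the number of nonzero entries of $\tilde A^1$ is at most that of $A^1$. For the bias, let $S_y:=\{i:(A^1_y)_{i,j}\neq 0\text{ for some }j\}$ and $S_b:=\{i:b^1_i\neq 0\}$; if $i\notin S_y\cup S_b$ then row $i$ of $A^1_y$ vanishes and $b^1_i=0$, hence $\tilde b^1_i=0$. Therefore the number of nonzero entries of $\tilde b^1$ is at most $|S_y\cup S_b|\leq |S_y|+|S_b|$. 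Since each $i\in S_y$ contributes at least one nonzero entry to $A^1_y$, we have $|S_y|\leq \sum_{i,j}\mathbbm{1}_{\{(A^1_y)_{i,j}\neq 0\}}$, and $|S_b|$ equals the count of nonzero entries of $b^1$. Combining these estimates,
\[
\sum_{i=1}^{N_1}\mathbbm{1}_{\{\tilde b^1_i\neq 0\}}+\sum_{i,j}\mathbbm{1}_{\{\tilde A^1_{i,j}\neq 0\}}\leq \sum_{i=1}^{N_1}\mathbbm{1}_{\{b^1_i\neq 0\}}+\sum_{i,j}\mathbbm{1}_{\{A^1_{i,j}\neq 0\}},
\]
which yields $\mathrm{size}(\phi_y)\leq \mathrm{size}(\phi)$.

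The statement is essentially a bookkeeping exercise; the only slightly subtle point is that cancellations in $\tilde b^1=A^1_y y+b^1$ must not inflate the count, and this is handled by the crude bound through $|S_y\cup S_b|$ above. Nothing else needs to be verified because the architecture for layers $\ell\geq 2$ is literally unchanged, so I anticipate no real obstacle.
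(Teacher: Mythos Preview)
Your proof is correct and follows essentially the same construction as the paper: absorb the fixed $y$-inputs into the first-layer bias via $\tilde b^1=A^1_y y+b^1$ and keep all other layers unchanged. The only difference is that you spell out the counting argument for $\mathrm{size}(\phi_y)\leq\mathrm{size}(\phi)$ in detail, whereas the paper simply asserts it.
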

\begin{proof}
	Denote by $((A^1,b^1),\ldots,(A^L,b^L))$ the parameters of $\phi$ with $L \in \N$, $N_0= d_0+d_1$, $N_1,\ldots,N_{L-1} \in \N$, $N_L=m$ and $A^{\ell} \in \R^{N_\ell \times N_{\ell -1}}$, $b^\ell \in \R^{N_\ell}$, $\ell=1,\ldots,L$. Denote by $A^{1,0} \in \R^{N_1 \times d_0}$ and $A^{1,1} \in \R^{N_1 \times d_1}$ the first $d_0$ and the remaining $d_1$ columns of $A^1$, respectively. Consider the neural network $\phi_y$ with parameters  $((A^{1,0},A^{1,1}y+b^1),(A^2,b^2),\ldots,(A^{L},b^L))$. Then 
	\begin{equation} 
	\label{eq:auxEq41}
	\begin{aligned}
	\phi_y(x) & = (\mathcal{W}_L \circ (\varrho \circ \mathcal{W}_{L-1}) \circ \cdots \circ (\varrho \circ \mathcal{W}_2) \circ \varrho)(A^{1,0}x + A^{1,1}y+b^1)
	\\ & = (\mathcal{W}_L \circ (\varrho \circ \mathcal{W}_{L-1}) \circ \cdots \circ (\varrho \circ \mathcal{W}_1))((x,y))
	\\ & = \Phi_y(x)
	\end{aligned}
	\end{equation}
	for all $x \in \R^{d_0}$ and $\mathrm{size}(\phi_y) \leq  \mathrm{size}(\phi)$, as claimed. 
\end{proof}

The next lemma will allow us to construct a realization of a random neural network and at the same time obtain a bound on the neural network weights. 

\begin{lemma}\label{lem:jointProb}
	Let $U$ be a nonnegative random variable, let $d, N \in \N$, let $M_1,M_2 >0$ and let $Y_1,\ldots,Y_N$ be i.i.d.\ $\R^d$-valued random variables. Suppose $\E[U] \leq M_1$ and $\E[\|Y_1\|]\leq M_2$. Then 
	\[
	\P\left(U\leq 3 M_1, \max_{i=1,\ldots,N} \|Y_i\| \leq 3 N M_2\right) > 0.
	\]
\end{lemma}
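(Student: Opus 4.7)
The plan is to estimate the probability of each of the two bad events separately using Markov's inequality, then combine them by a union bound.

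First, I would apply Markov's inequality to $U$: since $U \geq 0$ and $\E[U] \leq M_1$, we get
\[
\P(U > 3 M_1) \leq \frac{\E[U]}{3 M_1} \leq \frac{1}{3}.
\]
Next, for each fixed $i \in \{1,\ldots,N\}$, applying Markov's inequality to the nonnegative random variable $\|Y_i\|$ yields
\[
\P(\|Y_i\| > 3 N M_2) \leq \frac{\E[\|Y_i\|]}{3 N M_2} \leq \frac{1}{3 N},
\]
where we used that the $Y_i$ are identically distributed so $\E[\|Y_i\|] = \E[\|Y_1\|] \leq M_2$. (Independence is not actually needed here.)

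Then I would apply the union bound to aggregate the $N$ events $\{\|Y_i\| > 3 N M_2\}$:
\[
\P\!\left(\max_{i=1,\ldots,N} \|Y_i\| > 3 N M_2\right) \leq \sum_{i=1}^N \P(\|Y_i\| > 3 N M_2) \leq N \cdot \frac{1}{3N} = \frac{1}{3}.
\]
A final union bound gives
\[
\P\!\left(U \leq 3 M_1,\ \max_{i=1,\ldots,N} \|Y_i\| \leq 3 N M_2\right) \geq 1 - \frac{1}{3} - \frac{1}{3} = \frac{1}{3} > 0,
\]
which is the desired conclusion. There is no real obstacle here; the only subtle point is the choice of the scale $3 N M_2$ for the $\|Y_i\|$ bound, which is precisely what makes the union bound over $N$ terms collapse to a constant fraction, leaving strictly positive probability on the good event.
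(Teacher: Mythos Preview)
Your proof is correct. Both you and the paper reduce to showing $\P(U>3M_1)\leq\frac{1}{3}$ and $\P(\max_i\|Y_i\|>3NM_2)\leq\frac{1}{3}$, then combine via the union bound (equivalently, $\P(A\cap B)\geq\P(A)+\P(B)-1$) to obtain probability at least $\frac{1}{3}$ for the good event.

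The difference lies in how the bound on $\P(\max_i\|Y_i\|>3NM_2)$ is obtained. The paper uses the i.i.d.\ assumption to write this probability as $1-(1-\P(\|Y_1\|>3NM_2))^N$ and then invokes Bernoulli's inequality $(\tfrac{2}{3})^{1/N}\leq 1-\tfrac{1}{3N}$ together with Markov's inequality. Your route---Markov on each $\|Y_i\|$ followed by a union bound over the $N$ indices---is more elementary, avoids Bernoulli's inequality entirely, and, as you observe, does not require independence at all (identical distribution suffices). Both arguments yield the same numerical bound $\frac{1}{3}$; yours is arguably cleaner and shows the lemma holds under weaker hypotheses.
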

\begin{proof}
	Firstly, by the i.i.d.\ assumption it follows that
	\begin{equation}
	\label{eq:auxEq34}
	\begin{aligned}
	\P\left(\max_{i=1,\ldots,N} \|Y_i\| > 3 N M_2\right) & = 1- \left(\P\left(\|Y_1\| \leq 3 N M_2\right)\right)^N
	\\ & = 1- (1-\P\left(\|Y_1\| > 3 N M_2\right))^N.
	\end{aligned}
	\end{equation}
	Next, note that Bernoulli's inequality implies $(\frac{2}{3})^{1/N} \leq 1 - \frac{1}{3N}$ and therefore, by Markov's inequality, 
	\[
	\P\left(\|Y_1\| > 3 N M_2\right) \leq \frac{\E[\|Y_1\|]}{3 N M_2} \leq \frac{1}{3 N} \leq 1- \left(\frac{2}{3}\right)^{\frac{1}{N}}. 
	\]
	Thus, we obtain $(1-\P(\|Y_1\| > 3 N M_2))^N \geq \frac{2}{3}$ and inserting this in \eqref{eq:auxEq34} yields
	\begin{equation}
	\label{eq:auxEq35}
	\begin{aligned}
	\P\left(\max_{i=1,\ldots,N} \|Y_i\| > 3 N M_2\right) & \leq \frac{1}{3}.
	\end{aligned}
	\end{equation}
	Furthermore, Markov's inequality implies that
	\begin{equation}
	\label{eq:auxEq36}
	\begin{aligned}
	\P\left(U > 3 M_1\right) & \leq \frac{\E[U]}{3M_1} \leq \frac{1}{3}.
	\end{aligned}
	\end{equation}
	Combining \eqref{eq:auxEq35} and \eqref{eq:auxEq36} with the elementary fact that $\P(A\cap B) = \P(A) + \P(B) - \P(A \cup B) \geq \P(A) + \P(B) -1$ for $A, B \in \Fc$ then shows that
	\[
	\P\left(U\leq 3 M_1, \max_{i=1,\ldots,N} \|Y_i\| \leq 3 N M_2\right) \geq \frac{2}{3} + \frac{2}{3} -1 > 0,
	\]
	as claimed.
\end{proof}

\subsection{Proof of Theorem~\ref{thm:DNNapprox} and Corollary~\ref{cor:Lipschitz}}
\label{subsec:DNNapproxProof}
With these preparations we are now ready to prove Theorem~\ref{thm:DNNapprox}.
The proof is divided into several steps, which are highlighted in bold in order to facilitate reading. Let us first provide a brief sketch of the proof. The proof proceeds by backward induction. This entails some subtleties regarding the probability measure $\rho^d$, which we will not discuss here. We refer to the proof below for details. Here we rather provide an easy-to-follow overview. 

The starting point is the backward recursion \eqref{eq:recursionValueFunction}. Our goal is to provide a neural network approximation of the right hand side in \eqref{eq:recursionValueFunction}. At time $t$ we first aim to bound the $L^2(\rho^d)$-approximation error $E^d_t$ between the 
continuation value $\E[V_d(t+1,X_{t+1}^d)|X_t^d=x]$ and the random function $\Gamma_{\varepsilon,d,t}(x) = \frac{1}{N}\sum_{i=1}^N \hat{v}_{\varepsilon,d,t+1}(\eta_{\varepsilon,d,t}(x,Y^{d,i}_t))$, where  $\hat{v}_{\varepsilon,d,t+1}$ is a neural network approximating the value function at time $t+1$ and $Y^{d,1}_t, \ldots, Y^{d,N}_t$ are i.i.d.\ copies of $Y^{d}_t$. The existence and suitable properties of $\hat{v}_{\varepsilon,d,t+1}$ follow from the induction hypothesis. We derive a bound on $\E[E^d_t]$, which we can then use to obtain existence of a realization $\gamma_{\varepsilon,d,t}$ of $\Gamma_{\varepsilon,d,t}$ satisfying a slightly worse bound and such that the realization of $\max_{i=1,\ldots,N} \|Y^{d,i}_t\|$ can also be bounded suitably. This last point is necessary to control the growth of $\gamma_{\varepsilon,d,t}(x)$.
Then $\gamma_{\varepsilon,d,t}(x)$ is an approximation of the continuation value and so we naturally define the approximate value function at time $t$ by 
\begin{equation}
\begin{aligned}\label{eq:approxvalue}
\hat{v}_{\varepsilon,d,t}(x) = \max\left(\phi_{\varepsilon,d,t}(x) - \delta ,\gamma_{\varepsilon,d,t}(x) \right)
\end{aligned}
\end{equation}
for a suitably chosen $\delta$ (depending on $\varepsilon$). 
We then consider the continuation region  
\begin{equation}\label{eq:continuation} C_t = \{x \in \R^d \, \colon \, g_d(t,x) < \E[V_d(t+1,X_{t+1}^d)|X_t^d=x] \}
\end{equation}
and the approximate continuation region 
\begin{equation}\label{eq:approxcontinuation} \hat{C}_t = \{x \in \R^d \, \colon \, \phi_{{\varepsilon},d,t}(x) - \delta < \gamma_{{\varepsilon},d,t}(x) \}.
\end{equation}
Then we may decompose
\begin{equation}
\begin{aligned}
\label{eq:decomposition}
& |V_d(t,x)  -\hat{v}_{{\varepsilon},d,t}(x)| \\ &  = | g_d(t,x) -\phi_{{\varepsilon},d,t}(x) + \delta |\mathbbm{1}_{C_t^c \cap \hat{C}_t^c}(x)  + |\E[V_d(t+1,X_{t+1}^d)|X_t^d=x] - \phi_{{\varepsilon},d,t}(x) + \delta| \mathbbm{1}_{C_t \cap \hat{C}_t^c}(x) 
\\ & \quad +  | g_d(t,x) -\gamma_{{\varepsilon},d,t}(x) |\mathbbm{1}_{C_t^c \cap \hat{C}_t}(x) +  | \E[V_d(t+1,X_{t+1}^d)|X_t^d=x] -\gamma_{{\varepsilon},d,t}(x)  |\mathbbm{1}_{C_t \cap \hat{C}_t}(x).
\end{aligned}
\end{equation} 
The $L^2(\rho^d)$-error of the last term has already been analysed, and it remains to analyse the remaining terms. The first term is small due to Assumption~\ref{ass:Payoff}. The second and third term may not necessarily be small, but we will be able to show that $\rho^d(C_t \cap \hat{C}_t^c)$ and $\rho^d(C_t^c \cap \hat{C}_t)$ are small. Hence, the overall $L^2(\rho^d)$-error can be controlled. The proof is then completed by showing that the neural network \eqref{eq:approxvalue} satisfies the growth, size and Lipschitz properties required to carry out the induction argument.

\begin{proof}[Proof of Theorem~\ref{thm:DNNapprox}] 
$ $ \newline
\textbf{1. Preliminaries:}
Without loss of generality we may assume that the constants $c>0$, $q,\alpha \geq 0 $ in the statement of the theorem and in Assumptions~\ref{ass:Markov} and \ref{ass:Payoff} coincide; otherwise we replace each of them by the respective maximum and all the assumptions are still satisfied. We may also assume that $c \geq 1$. 

Furthermore,
if for each fixed $t \in \{0,\ldots,T\}$ there exist constants $\kappa_t,\mathfrak{q}_t,\mathfrak{r}_t \in [0,\infty)$ and a neural network $\psi_{\varepsilon,d,t}$ such that $\mathrm{size}(\psi_{\varepsilon,d,t}) \leq \kappa_t d^{\mathfrak{q}_t}\varepsilon^{-\mathfrak{r}_t}$  and \eqref{eq:approxError} holds for all $\varepsilon \in (0,1]$,  $d \in \N$, then also the statement of the theorem follows by choosing $\kappa$, $\mathfrak{q}$, $\mathfrak{r}$ as the respective maximum over $t \in \{0,\ldots,T\}$. 
	
Next, let $c_0,\ldots,c_T$ satisfy $c_0 = c$, $c_{t+1} =  \max(3c,1)^{2\max(p,2)}(1+c_t+c)$ and set $q_t = (2\max(p,2)+1)qt+q$. Then 
\[
\begin{aligned}
c_t & = c ( \max(3c,1)^{2\max(p,2)})^{t} + \sum_{k=0}^{t-1}( \max(3c,1)^{2\max(p,2)})^{k+1} (1+c) 
\end{aligned}
\]
for all $t \in \{0,\ldots,T\}$ and $c_t$ does not depend on $d$. 

\textbf{2. Stronger statement:} We will now proceed to prove the following stronger statement, which shows that the constants $\kappa_t,\mathfrak{q}_t,\mathfrak{r}_t$ can be chosen essentially independently of the probability measure $\rho^d$ and, in addition, $\rho^d$ may be allowed to depend on $t$. Specifically,  we will prove that for any $t \in \{0,\ldots,T\}$ there exist constants $\kappa_t,\mathfrak{q}_t,\mathfrak{r}_t \in [0,\infty)$ such 
 that
 for any family of probability measures $\rho_t^d$ on $\R^d$, $d \in \N$, satisfying 
\begin{equation}
\label{eq:CondMomentIntegrable}
\int_{\R^d} \|x\|^{2\max(p,2)} \rho^d_t(dx) \leq c_t d^{q_t}
\end{equation}
and for all $d \in \N$, $\varepsilon \in (0,1]$ there exists a neural network $\psi_{\varepsilon,d,t}$ such that
 \begin{equation}
 \label{eq:approxErrort}
 \left(\int_{\R^d} |V_d(t,x) - \psi_{\varepsilon,d,t}(x)|^2 \rho^d_t(d x) \right)^{1/2} \leq \varepsilon
 \end{equation}
 and  
\begin{align} \label{eq:NNgrowth}
|\psi_{\varepsilon,d,t}(x)| 
& \leq \kappa_t d^{\mathfrak{q}_t} \varepsilon^{-\mathfrak{r}_t} (1+\|x\|), \quad \text{ for all } x \in \R^d,
\\ \label{eq:NNsparse1} 
\mathrm{size}(\psi_{\varepsilon,d,t}) &  \leq \kappa_t d^{\mathfrak{q}_t}\varepsilon^{-\mathfrak{r}_t}, 
\\ \label{eq:NNlipschitz1} 
\mathrm{Lip}(\psi_{\varepsilon,d,t}) & \leq \kappa_t d^{\mathfrak{q}_t}.
\end{align}
Choosing $\rho_t^d = \rho^d$ for all $t$ and noting that \eqref{eq:CondMomentIntegrable} is satisfied due to $q \leq q_t$, $c \leq c_t$ the statement of Theorem~\ref{thm:DNNapprox} then follows.

In order to prove the stronger statement for each fixed $t$, we now proceed by backward induction. 

\textbf{3. Base case of backward induction:} In the case $t=T$ we have $V_d(T,x) = g_d(T,x)$ and therefore we may choose $\psi_{\varepsilon,d,T} = \phi_{\tilde{\varepsilon},d,T}$ with $\tilde{\varepsilon} = \varepsilon [c d^{q} (1+ (c_Td^{q_T} )^{\frac{p}{2\max(p,2)}})]^{-1}$. Then \eqref{eq:NNclose}, Jensen's inequality and \eqref{eq:CondMomentIntegrable} imply
\[\begin{aligned}
\left(\int_{\R^d} |g_d(T,x) - \psi_{\varepsilon,d,T}(x)|^2 \rho^d_T(d x) \right)^{1/2} & \leq \tilde{\varepsilon} c d^{q}\left(1+ \left(\int_{\R^d} \|x\|^{2p} \rho^d_T(d x) \right)^{1/2}\right)
\\ & \leq \tilde{\varepsilon} c d^{q} \left(1+ \left(\int_{\R^d} \|x\|^{2\max(p,2)} \rho^d_T(d x) \right)^{\frac{p}{2\max(p,2)}}\right)
\\ & \leq \tilde{\varepsilon} c d^{q} \left(1+ (c_Td^{q_T} )^{\frac{p}{2\max(p,2)}}\right) = \varepsilon.
\end{aligned}
\]
Furthermore, 
\eqref{eq:NNsparse} implies $\mathrm{size}(\psi_{\varepsilon,d,T}) \leq c d^{q} \varepsilon^{-\alpha} [c d^{q} (1+ (c_Td^{q_T} )^{\frac{p}{2\max(p,2)}})]^{\alpha}$ and so, recalling \eqref{eq:NNlipschitz} and \eqref{eq:phiGrowth}, the statement follows in the case $t=T$. 	

\textbf{4. Start of the induction step:}
The remainder of the proof will now be dedicated to the induction step. To improve readability we will again divide it into several steps. 

For the induction step we now assume that the stronger statement formulated in Step~2 above holds for time $t+1$ and aim to prove it for time $t$. To this end, let $\rho_t^d$ be a probability measure satisfying \eqref{eq:CondMomentIntegrable} and denote by $\nu^d_t$ the distribution of $Y_t^d$. 

\textbf{5. Induction hypothesis:}
Let $\kappa_{t+1},\mathfrak{q}_{t+1},\mathfrak{r}_{t+1} \in [0,\infty)$ denote the constants with which the stronger statement formulated in Step~2 above holds for time $t+1$.

Consider the probability measure $\rho_{t+1}^d = (\rho_t^d \otimes \nu_t^d) \circ (f_t^d)^{-1}$ given as the pushforward measure of $\rho_t^d \otimes \nu_t^d$ under $f_t^d$. Then, using the change-of-variables formula, \eqref{eq:fGrowth},  \eqref{eq:CondMomentIntegrable} and Assumption~\ref{ass:Markov}(iv) and writing $\bar{p}=2\max(p,2)$, this measure satisfies 
\begin{equation}
\label{eq:auxEq30}
\begin{aligned}
\int_{\R^d} \|z\|^{2\max(p,2)} \rho_{t+1}^d(dz) & = \int_{\R^d} \int_{\R^d} \|f_t^d(x,y)\|^{\bar{p}} \nu_{t}^d(dy) \rho^d_t(dx)
\\ & \leq \int_{\R^d} \int_{\R^d} (cd^q(1+\|x\|+\|y\|))^{\bar{p}} \nu_{t}^d(dy) \rho^d_t(dx)
\\ & \leq  (3cd^q)^{\bar{p}} \left( 1 + \int_{\R^d}\|x\|^{\bar{p}} \rho^d_t(dx) + \int_{\R^d}    \|y\|^{\bar{p}} \nu_{t}^d(dy) \right)
\\ & \leq (3cd^q)^{\bar{p}} \left( 1 + c_t d^{q_t} + c d^q \right)
\\ & \leq \max(3c,1)^{\bar{p}} d^{\bar{p}q+q_t+q} \left( 1 + c_t + c \right)
\\ & = c_{t+1} d^{q_{t+1}}.
\end{aligned}
\end{equation}
Hence, by induction hypothesis, for any $\varepsilon \in (0,1]$,  $d \in \N$ there exists a 
 neural network $\psi_{\varepsilon,d,t+1}$ such that
\begin{equation}
\label{eq:approxErrort+1}
\left(\int_{\R^d} |V_d(t+1,x) - \psi_{\varepsilon,d,t+1}(x)|^2 \rho^d_{t+1}(d x) \right)^{1/2} \leq \varepsilon
\end{equation}
and
\begin{align}
 \label{eq:NNgrowtht+1}
|\psi_{\varepsilon,d,t+1}(x)| 
& \leq \kappa_{t+1} d^{\mathfrak{q}_{t+1}} \varepsilon^{-\mathfrak{r}_{t+1}}(1+\|x\|), \quad \text{ for all } x \in \R^d,
\\ \label{eq:NNsparse1t+1} 
\mathrm{size}(\psi_{\varepsilon,d,t+1}) &  \leq \kappa_{t+1} d^{\mathfrak{q}_{t+1}}\varepsilon^{-\mathfrak{r}_{t+1}}, 
\\ \label{eq:NNlipschitz1t+1} 
\mathrm{Lip}(\psi_{\varepsilon,d,{t+1}}) & \leq \kappa_{t+1} d^{\mathfrak{q}_{t+1}}.
\end{align}

Now let $\varepsilon \in (0,1]$, $d \in \N$ be given. The remainder of the proof consists in selecting $\kappa_t,\mathfrak{q}_t,\mathfrak{r}_t$ (only depending on $c,\alpha,p,q,t,T,\kappa_{t+1},\mathfrak{q}_{t+1},\mathfrak{r}_{t+1}$) and constructing a neural network $\psi_{\varepsilon,d,t}$ such that \eqref{eq:approxErrort}--\eqref{eq:NNlipschitz1} are satisfied. This will complete the proof.  

In what follows we fix $\bar{\varepsilon} \in (0,1)$ and choose
\begin{equation}
\label{eq:NandDelaChoice} N
 =\lceil \bar{\varepsilon}^{-2 \mathfrak{r}_{t+1}-2} \rceil \qquad \text{ and } \qquad \delta  = \bar{\varepsilon}^{\frac{1}{2}}.
\end{equation}
The value of $\bar{\varepsilon}$ will be chosen later (depending on $\varepsilon$ and $d$).    

\textbf{6. Approximation of the continuation value:}
Let $Y^{d,i}_t$, $i \in \N$,  be i.i.d.\ copies of $Y^{d}_t$ and set 
$\hat{v}_{\bar{\varepsilon},d,t+1} = \psi_{\bar{\varepsilon},d,t+1} $. 
Define the (random) function \[\Gamma_{\bar{\varepsilon},d,t}(x) = \frac{1}{N}\sum_{i=1}^N \hat{v}_{\bar{\varepsilon},d,t+1}(\eta_{\bar{\varepsilon},d,t}(x,Y^{d,i}_t)).\]
Note that $\Gamma_{\bar{\varepsilon},d,t}$ is a random function, since $Y^{d,i}_t$ is random.


We now estimate the expected $L^2(\rho^d_t)$-error that arises when $\Gamma_{\bar{\varepsilon},d,t}$ is used to approximate the continuation value. 
Denote 
$Z^{\bar{\varepsilon},d,i}(x)= \hat{v}_{\bar{\varepsilon},d,t+1}(\eta_{\bar{\varepsilon},d,t}(x,Y^{d,i}_t))$ and recall that Assumption~\ref{ass:Markov}(i)--(ii) implies that $Y^{d}_t$ is independent of $X^d_t$.
 Then  $\Gamma_{\bar{\varepsilon},d,t}(x) = \frac{1}{N} \sum_{i=1}^N Z^{\bar{\varepsilon},d,i}(x)$ and thus the bias-variance decomposition and independence show
\begin{equation}
\label{eq:auxEq14}
\begin{aligned}
& \int_{\R^d}  \E[| \E[V_d(t+1,X_{t+1}^d)|X_t^d=x] -\Gamma_{\bar{\varepsilon},d,t}(x) |^2] \rho^d_t(dx)  
\\ &  = \int_{\R^d} | \E[V_d(t+1,X_{t+1}^d)|X_t^d=x] -\E[ Z^{\bar{\varepsilon},d,1}(x)]|^2 + \E[| \E[Z^{\bar{\varepsilon},d,1}(x)] -\Gamma_{\bar{\varepsilon},d,t}(x) |^2] \rho^d_t(dx)
\\ &  = \int_{\R^d} | \E[V_d(t+1,X_{t+1}^d)|X_t^d=x] -\E[ Z^{\bar{\varepsilon},d,1}(x)]|^2 + \frac{1}{N}\E[| \E[Z^{\bar{\varepsilon},d,1}(x)] -Z^{\bar{\varepsilon},d,1}(x) |^2] \rho^d_t(dx).
\end{aligned}
\end{equation}

The term corresponding to the first integral in the last line of \eqref{eq:auxEq14} can be estimated as 
\begin{equation}
\label{eq:auxEq25}
\begin{aligned}
\int_{\R^d} & | \E[V_d(t+1,X_{t+1}^d)|X_t^d=x] -\E[ \hat{v}_{\bar{\varepsilon},d,t+1}(\eta_{\bar{\varepsilon},d,t}(x,Y^{d,i}_t))]|^2 \rho^d_t(dx)
\\ & \leq 2 \int_{\R^d} | \E[V_d(t+1,X_{t+1}^d)|X_t^d=x] -\E[ \hat{v}_{\bar{\varepsilon},d,t+1}(X_{t+1}^d)|X_t^d=x]|^2 \rho^d_t(dx)
\\ & \quad + 2 \int_{\R^d} | \E[ \hat{v}_{\bar{\varepsilon},d,t+1}(X_{t+1}^d)|X_t^d=x]-\E[ \hat{v}_{\bar{\varepsilon},d,t+1}(\eta_{\bar{\varepsilon},d,t}(x,Y^{d}_t))]|^2 \rho^d_t(dx).
\end{aligned}
\end{equation}

\textbf{6.a) Applying the error estimate from $t+1$:}
Now consider the first term in the right hand side of \eqref{eq:auxEq25} and recall that $\rho_{t+1}^d = (\rho_t^d \otimes \nu_t^d) \circ (f_t^d)^{-1}$ is the pushforward measure of $\rho_t^d \otimes \nu_t^d$ under $f_t^d$. Then Jensen's inequality, \eqref{eq:MarkovUpdate}, Assumption~\ref{ass:Markov}(ii) and \eqref{eq:approxErrort+1} yield
\begin{equation}
\label{eq:auxEq26}
\begin{aligned}
\int_{\R^d} & | \E[V_d(t+1,X_{t+1}^d)|X_t^d=x] -\E[ \hat{v}_{\bar{\varepsilon},d,t+1}(X_{t+1}^d)|X_t^d=x]|^2 \rho^d_t(dx)
\\ 
& \leq \int_{\R^d}   \E[|V_d(t+1,f_t^d(x,Y_t^d)) - \hat{v}_{\bar{\varepsilon},d,t+1}(f_t^d(x,Y_t^d))|^2] \rho^d_t(dx)
\\ & = \int_{\R^d} \int_{\R^d} |V_d(t+1,f_t^d(x,y)) - \hat{v}_{\bar{\varepsilon},d,t+1}(f_t^d(x,y))|^2 \nu_t^d(dy) \rho^d_t(dx)
\\ & = \int_{\R^d} |V_d(t+1,z) - \hat{v}_{\bar{\varepsilon},d,t+1}(z)|^2 \rho_{t+1}^d(dz)
\\ & \leq \bar{\varepsilon}^2.
\end{aligned}
\end{equation}

\textbf{6.b) Applying the Lipschitz property of the network at $t+1$:}
For the second term in the right hand side of \eqref{eq:auxEq25}, note that by induction hypothesis \eqref{eq:NNlipschitz1t+1} we have $\mathrm{Lip}(\hat{v}_{\bar{\varepsilon},d,t+1})  \leq \kappa_{t+1} d^{\mathfrak{q}_{t+1}}$ and hence \eqref{eq:fapprox}, the assumption $\E[\|Y^{d}_t\|^p] \leq c d^q$ and \eqref{eq:CondMomentIntegrable} imply
\begin{equation}
\label{eq:auxEq27}
\begin{aligned}
\int_{\R^d} & | \E[ \hat{v}_{\bar{\varepsilon},d,t+1}(f^d_t(x,Y_t^d))]-\E[ \hat{v}_{\bar{\varepsilon},d,t+1}(\eta_{\bar{\varepsilon},d,t}(x,Y^{d}_t))]|^2 \rho^d_t(dx)
\\ & \leq (\kappa_{t+1} d^{\mathfrak{q}_{t+1}})^2 \int_{\R^d} | \E[ \| f^d_t(x,Y_t^d)- \eta_{\bar{\varepsilon},d,t}(x,Y^{d}_t)\|]|^2 \rho^d_t(dx)
\\ & \leq (\bar{\varepsilon} c \kappa_{t+1} d^{q+\mathfrak{q}_{t+1}})^2 \int_{\R^d} |1+\|x\|^p + \E[\|Y^{d}_t\|^p]|^2 \rho^d_t(dx)
\\ & \leq 3 (\bar{\varepsilon} c \kappa_{t+1} d^{q+\mathfrak{q}_{t+1}})^2  \left(1+ c^2 d^{2q} + \left(\int_{\R^d}\|x\|^{2\max(p,2)}  \rho^d_t(dx)\right)^{\frac{p}{\max(p,2)}} \right)
\\ & \leq 3 (\bar{\varepsilon} c \kappa_{t+1})^2 d^{2(q+\mathfrak{q}_{t+1})+\max(2q,\frac{pq_t}{\max(p,2)})}  \left(1+ c^2 + c_t^{\frac{p}{\max(p,2)}} \right).
\end{aligned}
\end{equation}
\textbf{6.c) Applying the growth property of the network at $t+1$:}
For the last term in \eqref{eq:auxEq14}, note that 
the induction hypothesis $|\hat{v}_{\bar{\varepsilon},d,t+1}(x)| 
\leq \kappa_{t+1} d^{\mathfrak{q}_{t+1}} \bar{\varepsilon}^{-\mathfrak{r}_{t+1}} (1+\|x\|)$, \eqref{eq:etaGrowth}, $\E[\|Y_{t}^d\|^{2}] \leq c d^q$, Hölder's inequality and \eqref{eq:CondMomentIntegrable} yield
\begin{equation}
\label{eq:auxEq28}
\begin{aligned}
\int_{\R^d} & \E[| \E[Z^{\bar{\varepsilon},d,1}(x)] -Z^{\bar{\varepsilon},d,1}(x) |^2] \rho^d_t(dx) \\ & \leq  \int_{\R^d}  \E[| Z^{\bar{\varepsilon},d,1}(x) |^2] \rho^d_t(dx) \\ & = \int_{\R^d}  \E[|\hat{v}_{\bar{\varepsilon},d,t+1}(\eta_{\bar{\varepsilon},d,t}(x,Y^{d}_t)) |^2] \rho^d_t(dx) 
\\ & \leq (\kappa_{t+1} d^{\mathfrak{q}_{t+1}}\bar{\varepsilon}^{-\mathfrak{r}_{t+1}})^2 \int_{\R^d}  \E[ (1+\|\eta_{\bar{\varepsilon},d,t}(x,Y^{d}_t)\|)^2] \rho^d_t(dx) 
\\ & \leq 2(\kappa_{t+1} d^{\mathfrak{q}_{t+1}} \bar{\varepsilon}^{-\mathfrak{r}_{t+1}})^2 \left(1 + \int_{\R^d}  \E[(c d^{q} (2+\|x\|+\|Y^{d}_t\|))^2] \rho^d_t(dx) \right)
\\ & \leq 2(\kappa_{t+1} d^{\mathfrak{q}_{t+1}+q}\bar{\varepsilon}^{-\mathfrak{r}_{t+1}})^2 \left(1 + 3 c^2 \left[ 4+cd^q+\left(\int_{\R^d} \|x\|^{2\max(p,2)} \rho^d_t(dx)\right)^{\frac{2}{2\max(p,2)}}\right] \right)
\\ & \leq 2 \kappa_{t+1}^2 d^{2\mathfrak{q}_{t+1}+2q+\max(q,\frac{q_t}{\max(p,2)})} \bar{\varepsilon}^{-2\mathfrak{r}_{t+1}} \left(1 + 3 c^2 \left[ 4+c+c_t^{\frac{1}{\max(p,2)}}\right] \right).
\end{aligned}
\end{equation}
\textbf{6.d) Bounding the overall error and constructing a realization:}
We can now insert the estimates from \eqref{eq:auxEq26} and \eqref{eq:auxEq27} into \eqref{eq:auxEq25} and subsequently insert the resulting bound and \eqref{eq:auxEq28} into \eqref{eq:auxEq14}. We obtain 
\begin{equation}
\label{eq:auxEq29}
\begin{aligned}
& \int_{\R^d}  \E[| \E[V_d(t+1,X_{t+1}^d)|X_t^d=x] -\Gamma_{\bar{\varepsilon},d,t}(x) |^2] \rho^d_t(dx)  
\\ & \leq 2 \int_{\R^d} | \E[V_d(t+1,X_{t+1}^d)|X_t^d=x] -\E[ \hat{v}_{\bar{\varepsilon},d,t+1}(X_{t+1}^d)|X_t^d=x]|^2 \rho^d_t(dx)
\\ & \quad + 2 \int_{\R^d} | \E[ \hat{v}_{\bar{\varepsilon},d,t+1}(X_{t+1}^d)|X_t^d=x]-\E[ \hat{v}_{\bar{\varepsilon},d,t+1}(\eta_{\bar{\varepsilon},d,t}(x,Y^{d}_t))]|^2 \rho^d_t(dx)
\\ & \quad + \frac{1}{N}\int_{\R^d} \E[| \E[Z^{\bar{\varepsilon},d,1}(x)] -Z^{\bar{\varepsilon},d,1}(x) |^2] \rho^d_t(dx)
\\ & \leq 2 \bar{\varepsilon}^2 + 6 (\bar{\varepsilon} c \kappa_{t+1})^2 d^{2(q+\mathfrak{q}_{t+1})+\max(2q,\frac{pq_t}{\max(p,2)})}  \left(1+ c^2 + c_t^{\frac{p}{\max(p,2)}} \right) 
\\ & \quad + \frac{2}{N} \kappa_{t+1}^2 d^{2\mathfrak{q}_{t+1}+2q+\max(q,\frac{q_t}{\max(p,2)})} \bar{\varepsilon}^{-2\mathfrak{r}_{t+1}} \left(1 + 3 c^2 \left[ 4+c+c_t^{\frac{1}{\max(p,2)}}\right] \right)
\\ & < \tilde{c}_2 d^{\tilde{q}_2} [\bar{\varepsilon}^2 + N^{-1}\bar{\varepsilon}^{-2\mathfrak{r}_{t+1}}]
\end{aligned}
\end{equation}
with $\tilde{c}_2 = 2+8\max(c,1)^2\kappa_{t+1}^2(4+\max(c,1)^2 + \max(c_t,1))$ and $\tilde{q}_2 = 2(q+\mathfrak{q}_{t+1})+\max(2q,q_t)$. 
But \eqref{eq:auxEq29} implies 
\begin{equation}
\label{eq:auxEq31}
\begin{aligned}
\E\left[\int_{\R^d} | \E[V_d(t+1,X_{t+1}^d)|X_t^d=x] -\Gamma_{\bar{\varepsilon},d,t}(x) |^2 \rho^d_t(dx) \right]  
< \tilde{c}_2 d^{\tilde{q}_2} [\bar{\varepsilon}^2 + N^{-1}\bar{\varepsilon}^{-2\mathfrak{r}_{t+1}}].
\end{aligned}
\end{equation}
Hence, Assumption~\ref{ass:Markov}(iv), the fact that $Y^{d,1}_t,\ldots,Y^{d,N}_t$ are  i.i.d.\ copies of $Y^{d}_t$ and Lemma~\ref{lem:jointProb} show that there exists 
$\omega \in \Omega$ such that  $\gamma_{\bar{\varepsilon},d,t}(x) = \frac{1}{N}\sum_{i=1}^N \hat{v}_{\bar{\varepsilon},d,t+1}(\eta_{\bar{\varepsilon},d,t}(x,Y^{d,i}_t(\omega)))$ (i.e.\ the realization of $\Gamma_{\bar{\varepsilon},d,t}$ at $\omega$) satisfies
\begin{equation}
\label{eq:auxEq32}
\begin{aligned}
\int_{\R^d} | \E[V_d(t+1,X_{t+1}^d)|X_t^d=x] - \gamma_{\bar{\varepsilon},d,t}(x) |^2 \rho^d_t(dx)
\leq 3 \tilde{c}_2 d^{\tilde{q}_2} [\bar{\varepsilon}^2 + N^{-1}\bar{\varepsilon}^{-2\mathfrak{r}_{t+1}}]
\end{aligned}
\end{equation}
and 
\begin{equation}
\label{eq:Ybound}
\max_{i=1,\ldots,N}\|Y_t^{d,i}(\omega)\| \leq 3 N cd^q.
\end{equation}
We now define
\begin{equation}
\begin{aligned}\label{eq:auxEq5}
\hat{v}_{\bar{\varepsilon},d,t}(x) = \max\left(\phi_{\bar{\varepsilon},d,t}(x) - \delta ,\gamma_{\bar{\varepsilon},d,t}(x) \right)
\end{aligned}
\end{equation}
and claim that $\psi_{\varepsilon,d,t}=\hat{v}_{\bar{\varepsilon},d,t}$ satisfies all the properties required in \eqref{eq:approxErrort}--\eqref{eq:NNlipschitz1}.

\textbf{7. Growth bound on the constructed network:}
Let us first verify \eqref{eq:NNgrowth}. Indeed, the growth bound on $\phi_{\bar{\varepsilon},d,t}$ in \eqref{eq:phiGrowth}, the induction hypothesis \eqref{eq:NNgrowtht+1}, the growth bound \eqref{eq:etaGrowth} on $\eta_{\bar{\varepsilon},d,t}$, the bound \eqref{eq:Ybound} on $\|Y_t^{d,i}(\omega)\|$ and the choice of $N$ in \eqref{eq:NandDelaChoice} imply  for all $x \in \R^d$ that
\begin{equation} 
\begin{aligned}
\label{eq:auxEq44} 
|\psi_{\varepsilon,d,t}(x)| 
& \leq  |\phi_{\bar{\varepsilon},d,t}(x)| + \delta +| \gamma_{\bar{\varepsilon},d,t}(x)|
\\ & \leq c d^q (2 + \|x\|) + \delta + \frac{1}{N}\sum_{i=1}^N |\hat{v}_{\bar{\varepsilon},d,t+1}(\eta_{\bar{\varepsilon},d,t}(x,Y^{d,i}_t(\omega)))|
\\ & \leq c d^q (2 + \|x\|) + \delta + \frac{1}{N}\sum_{i=1}^N \kappa_{t+1} d^{\mathfrak{q}_{t+1}} \bar{\varepsilon}^{-\mathfrak{r}_{t+1}} (1+\|\eta_{\bar{\varepsilon},d,t}(x,Y^{d,i}_t(\omega))\|)
\\ & \leq c d^q (2 + \|x\|) + \delta + \frac{1}{N}\sum_{i=1}^N \kappa_{t+1} d^{\mathfrak{q}_{t+1}} \bar{\varepsilon}^{-\mathfrak{r}_{t+1}} (1+cd^q(2+\|x\|+\|Y^{d,i}_t(\omega)\|))
\\ & \leq c d^q (2 + \|x\|) + \delta + \kappa_{t+1} d^{\mathfrak{q}_{t+1}} \bar{\varepsilon}^{-\mathfrak{r}_{t+1}} (1+cd^q(2+\|x\|+3 N c d^q ))
\\ & \leq \tilde{c}_3 d^{\tilde{q}_3} \bar{\varepsilon}^{-\tilde{r}_3} (1+\|x\|)  
\end{aligned}
\end{equation}
with $\tilde{c}_3 = 18 \max(c,1,\kappa_{t+1})\max(c,1)^2$, $\tilde{q}_3 = \mathfrak{q}_{t+1}+2q$ and $\tilde{r}_3 = 3\mathfrak{r}_{t+1}+2$. 

\textbf{8. Bounding the size of the constructed network:}
Next, we verify \eqref{eq:NNsparse1}. To achieve this, first note that for each $i$, Lemma~\ref{lem:nnfixed} shows that the map $x \mapsto \eta_{\bar{\varepsilon},d,t}(x,Y^{d,i}_t(\omega))$ can be realized as a neural network with size at most $\mathrm{size}(\eta_{\bar{\varepsilon},d,t})$. Next, the composition of two ReLU neural networks $\phi_1$, $\phi_2$ can again be realized by a ReLU neural network with size at most $2 (\mathrm{size}(\phi_1) + \mathrm{size}(\phi_2))$ (see, e.g. \citet[Proposition~2.2]{Opschoor2020}). Finally, \citet[Lemma~3.2]{GS20_925} shows that the weighted sum of deep neural networks $\phi_1,\ldots,\phi_N$ with the same number of layers, the same input dimension and the same output dimension can be realized by another deep neural network with size at most $\sum_{i=1}^N \mathrm{size}(\phi_i)$.   Therefore, $\gamma_{\bar{\varepsilon},d,t}$ can be realized as a deep neural network with  
\begin{equation}
\label{eq:sizegamma}
\begin{aligned}
\mathrm{size}(\gamma_{\bar{\varepsilon},d,t}) & \leq \sum_{i=1}^N 2 (\mathrm{size}(\hat{v}_{\bar{\varepsilon},d,t+1}) + \mathrm{size}(\eta_{\bar{\varepsilon},d,t}))
\\ & \leq 2N (\kappa_{t+1} d^{\mathfrak{q}_{t+1}}\bar{\varepsilon}^{-\mathfrak{r}_{t+1}}+ c d^q \bar{\varepsilon}^{-\alpha}),
\end{aligned}
\end{equation}
where the last step follows from the induction hypothesis \eqref{eq:NNsparse1t+1} and the bound  \eqref{eq:etasparse} on the size of $\eta_{\bar{\varepsilon},d,t}$. Next, subtracting a constant corresponds to a change of the ``bias'' $b^L$  in the last layer and so $\phi_{\bar{\varepsilon},d,t} - \delta$ is a neural network with $\mathrm{size}(\phi_{\bar{\varepsilon},d,t} - \delta) = \mathrm{size}(\phi_{\bar{\varepsilon},d,t})$. Define the neural network $\mathfrak{m}\colon \R^2 \to \R$, $\mathfrak{m}(x,y) = A^2 \varrho(A^1(x,y)^\top)$ with 
\begin{equation*}
A^1= \left(
\begin{array}{cr}
1&-1\\
0&1\\
0&-1
\end{array}
\right) \quad \mbox{and} \quad
A^2= \left(
\begin{array}{ccc}
1&1&-1\\
\end{array}
\right),
\end{equation*}
then $\mathfrak{m}(x,y)= \max(x-y,0)+\max(y,0)-\max(-y,0) = \max(x,y)$.
Thus, $\hat{v}_{\bar{\varepsilon},d,t}$ in \eqref{eq:auxEq5} can be realized as a neural network by the composition of $\mathfrak{m}$ with the parallelization of $\phi_{\bar{\varepsilon},d,t} - \delta$ and $\gamma_{\bar{\varepsilon},d,t}$ (see, e.g., \citet[Proposition~2.3]{Opschoor2020}) and the size of the parallelization is bounded by $\mathrm{size}(\phi_{\bar{\varepsilon},d,t}) + \mathrm{size}(\gamma_{\bar{\varepsilon},d,t})$. This and the bound \eqref{eq:NNsparse} on the size of $\phi_{\bar{\varepsilon},d,t}$, the choice of $N$ in \eqref{eq:NandDelaChoice} and the bound \eqref{eq:sizegamma} on the size of $\gamma_{\bar{\varepsilon},d,t}$ imply
\begin{equation}
\label{eq:sizeVhat}
\begin{aligned}
\mathrm{size}(\psi_{\varepsilon,d,t}) & \leq  2(\mathrm{size}(\mathfrak{m})+\mathrm{size}(\phi_{\bar{\varepsilon},d,t})+ \mathrm{size}(\gamma_{\bar{\varepsilon},d,t}))
\\ & \leq 2(7+cd^q \bar{\varepsilon}^{-\alpha} + 2N (\kappa_{t+1} d^{\mathfrak{q}_{t+1}}\bar{\varepsilon}^{-\mathfrak{r}_{t+1}}+ c d^q \bar{\varepsilon}^{-\alpha}))
\\ & \leq 
\tilde{c}_4 d^{\tilde{q}_4} \bar{\varepsilon}^{-\tilde{r}_4}  
\end{aligned}
\end{equation}
with $\tilde{c}_4 = 2(7+5c + 4\kappa_{t+1})$, $\tilde{q}_4 = \max(q,\mathfrak{q}_{t+1})$ and $\tilde{r}_4 =2 \mathfrak{r}_{t+1}+2+\max(\alpha,\mathfrak{r}_{t+1})$. 

\textbf{9. Lipschitz constant of the constructed network:}
Next, we verify \eqref{eq:NNlipschitz1}. To do this, we note that the induction hypothesis \eqref{eq:NNlipschitz1t+1} and the Lipschitz property \eqref{eq:etaLipschitz} of $\eta_{\bar{\varepsilon},d,t}$ imply for all $x,y \in \R^d$ that
\begin{equation} 
\begin{aligned}
\label{eq:auxEq37}
|\gamma_{\bar{\varepsilon},d,t}(x)-\gamma_{\bar{\varepsilon},d,t}(y)| &  = \left| \frac{1}{N}\sum_{i=1}^N (\hat{v}_{\bar{\varepsilon},d,t+1}(\eta_{\bar{\varepsilon},d,t}(x,Y^{d,i}_t(\omega))) -  \hat{v}_{\bar{\varepsilon},d,t+1}(\eta_{\bar{\varepsilon},d,t}(y,Y^{d,i}_t(\omega)))) \right| 
\\ & \leq \frac{1}{N}\sum_{i=1}^N  \kappa_{t+1} d^{\mathfrak{q}_{t+1}} \|\eta_{\bar{\varepsilon},d,t}(x,Y^{d,i}_t(\omega)) -  \eta_{\bar{\varepsilon},d,t}(y,Y^{d,i}_t(\omega))\| 
\\ & \leq  \kappa_{t+1} d^{\mathfrak{q}_{t+1}} c d^q \|x -  y\|. 
\end{aligned}
\end{equation}
In addition, \eqref{eq:NNlipschitz} implies $\mathrm{Lip}(\phi_{\bar{\varepsilon},d,t} - \delta) = \mathrm{Lip}(\phi_{\bar{\varepsilon},d,t}) \leq c d^q$ and the pointwise maximum of two Lipschitz continuous functions is again Lipschitz continuous with Lipschitz constant given by the maximum of the two Lipschitz constants. Combining this with \eqref{eq:auxEq37} yields
\begin{equation}
\label{eq:psiLipschitzConstant}
\mathrm{Lip}(\psi_{\varepsilon,d,t})  \leq \max(c d^q,\kappa_{t+1} d^{\mathfrak{q}_{t+1}+q} c ) \leq c \max(\kappa_{t+1},1) d^{\mathfrak{q}_{t+1}+q}.
\end{equation}

\textbf{10. Bounding the overall approximation error:}
We now work towards verifying the approximation error bound \eqref{eq:approxErrort}. To achieve this, let $C_t = \{x \in \R^d \, \colon \, g_d(t,x) < \E[V_d(t+1,X_{t+1}^d)|X_t^d=x] \}$ be the continuation region and let $\hat{C}_t = \{x \in \R^d \, \colon \, \phi_{\bar{\varepsilon},d,t}(x) - \delta < \gamma_{\bar{\varepsilon},d,t}(x) \}$ be the approximate continuation region. Then 
\begin{equation}
\begin{aligned}
\label{eq:auxEq7}
 & |V_d(t,x)  -\hat{v}_{\bar{\varepsilon},d,t}(x)| \\ &  = | g_d(t,x) -\phi_{\bar{\varepsilon},d,t}(x) + \delta |\mathbbm{1}_{C_t^c \cap \hat{C}_t^c}(x)  + |\E[V_d(t+1,X_{t+1}^d)|X_t^d=x] - \phi_{\bar{\varepsilon},d,t}(x) + \delta| \mathbbm{1}_{C_t \cap \hat{C}_t^c}(x) 
 \\ & \quad +  | g_d(t,x) -\gamma_{\bar{\varepsilon},d,t}(x) |\mathbbm{1}_{C_t^c \cap \hat{C}_t}(x) +  | \E[V_d(t+1,X_{t+1}^d)|X_t^d=x] -\gamma_{\bar{\varepsilon},d,t}(x)  |\mathbbm{1}_{C_t \cap \hat{C}_t}(x).
\end{aligned}
\end{equation} 
We now estimate (the integral of) each of these four terms separately. 
For the first term, from \eqref{eq:NNclose} we directly get 
\begin{equation}
\label{eq:auxEq8}
| g_d(t,x) -\phi_{\bar{\varepsilon},d,t}(x) + \delta |\mathbbm{1}_{C_t^c \cap \hat{C}_t^c}(x) \leq \delta + \bar{\varepsilon} c d^{q} (1+\|x\|^p)
\end{equation}
and so we proceed with analysing the second term. 

\textbf{10.a) Bounding the approximation error on $C_t \cap \hat{C}_t^c$:}
From Lemma~\ref{lem:Vgrowth} we have the growth bound $|V_d(t+1,x)|  \leq  \hat{c}_{t+1} d^{\hat{q}_{t+1}} (1+\|x\|)$ 
and so, using \eqref{eq:phiGrowth}, the second term in \eqref{eq:auxEq7} can be estimated as 
\begin{equation}
\label{eq:auxEq9}
\begin{aligned}
& |\E[V_d(t+1,X_{t+1}^d)|X_t^d=x]  - \phi_{\bar{\varepsilon},d,t}(x) + \delta| \mathbbm{1}_{C_t \cap \hat{C}_t^c}(x) 
\\ & \quad \leq \left[\hat{c}_{t+1} d^{\hat{q}_{t+1}} (1+ \E[\|X^d_{t+1}\| |X^d_t=x]) +  c d^{q} (2+\|x\|) + \delta \right] \mathbbm{1}_{C_t \cap \hat{C}_t^c}(x) 
\\ & \quad \leq 4 \max(\hat{c}_{t+1},c) d^{\hat{q}_{t+1}} \left[1+\sum_{s=t}^{t+1} \E[\|X^d_s\| |X^d_t=x] \right] \mathbbm{1}_{C_t \cap \hat{C}_t^c}(x).
\end{aligned}
\end{equation}
Combining this with \eqref{eq:CondMomentIntegrable}, \eqref{eq:condMomentGrowth2} in Lemma~\ref{lem:growthMoments}
and Hölder's inequality we estimate 
\begin{equation}
\label{eq:auxEq10}
\begin{aligned}
	& \left(\int_{\R^d} |\E[V_d(t+1,X_{t+1}^d)|X_t^d=x]  - \phi_{\bar{\varepsilon},d,t}(x) + \delta|^2 \mathbbm{1}_{C_t \cap \hat{C}_t^c}(x)  \rho^d_t(dx)\right)^{1/2}
	\\ & \quad \leq 4 \max(\hat{c}_{t+1},c) d^{\hat{q}_{t+1}} \left(\int_{\R^d} \left[1+\sum_{s=t}^{t+1} \E[\|X^d_s\| |X^d_t=x]  \right]^2 \mathbbm{1}_{C_t \cap \hat{C}_t^c}(x) \rho^d_t(dx)\right)^{1/2}
	\\ & \quad \leq 4 \max(\hat{c}_{t+1},c) d^{\hat{q}_{t+1}} \left(\int_{\R^d} \left[1+\sum_{s=t}^{t+1} \E[\|X^d_s\| |X^d_t=x]  \right]^4 \rho^d_t(dx) \right)^{\frac{1}{4}}  \left(\rho^d_t({C_t \cap \hat{C}_t^c})\right)^{\frac{1}{4}}
	\\ & \quad \leq 4 \max(\hat{c}_{t+1},c) d^{\hat{q}_{t+1}}\left[1+\sum_{s=t}^{t+1} \left(\int_{\R^d}  \E[\|X^d_s\| |X^d_t=x]^4 \rho^d_t(dx) \right)^{\frac{1}{4}}\right]  \left(\rho^d_t({C_t \cap \hat{C}_t^c})\right)^{\frac{1}{4}}
	\\ & \quad \leq 4 \max(\hat{c}_{t+1},c) d^{\hat{q}_{t+1}}\left[1+ \tilde{c}_1 d^{\tilde{q}_1} 2 (1+ c_t^{\frac{1}{4}}d^{\frac{q_t}{4}})\right]  \left(\rho^d_t({C_t \cap \hat{C}_t^c})\right)^{\frac{1}{4}}.	 
\end{aligned}
\end{equation}

\textbf{10.b) Estimating $\rho^d_t({C_t \cap \hat{C}_t^c})$:}
Next, we aim to estimate $\rho^d_t({C_t \cap \hat{C}_t^c})$. To do this, set $A = \big\{x \in \R^d \, \colon   \, |g_d(t,x) - \phi_{\bar{\varepsilon},d,t}(x)|> \frac{\delta}{2} \big\}$,  $B = \big\{x \in \R^d \, \colon   \, |\E[V_d(t+1,X_{t+1}^d)|X_t^d=x]- \gamma_{\bar{\varepsilon},d,t}(x)|> \frac{\delta}{2} \big\}$ and note that
\begin{equation}
\label{eq:auxEq11}
\begin{aligned}
 C_t \cap \hat{C}_t^c & = \big\{x \in \R^d \, \colon   \, g_d(t,x) < \E[V_d(t+1,X_{t+1}^d)|X_t^d=x], \phi_{\bar{\varepsilon},d,t}(x) - \delta \geq \gamma_{\bar{\varepsilon},d,t}(x)  \big\}
 \\ & \subset
  A \cup B,
\end{aligned}
\end{equation}
since $|g_d(t,x) - \phi_{\bar{\varepsilon},d,t}(x)|\leq \frac{\delta}{2}$, $|\E[V_d(t+1,X_{t+1}^d)|X_t^d=x]- \gamma_{\bar{\varepsilon},d,t}(x)|\leq \frac{\delta}{2}  $ and $g_d(t,x) < \E[V_d(t+1,X_{t+1}^d)|X_t^d=x]$ implies 
\[
\phi_{\bar{\varepsilon},d,t}(x) - \delta \leq g_d(t,x) - \frac{\delta}{2}< \E[V_d(t+1,X_{t+1}^d)|X_t^d=x]- \frac{\delta}{2} \leq  \gamma_{\bar{\varepsilon},d,t}(x).
\]

Furthermore, Markov's inequality, \eqref{eq:CondMomentIntegrable} and \eqref{eq:NNclose} imply that
\begin{equation}
\label{eq:auxEq13}
\begin{aligned}
\rho^d_t (A) = \rho^d_t & \left(\big\{x \in \R^d \, \colon   \, |g_d(t,x) - \phi_{\bar{\varepsilon},d,t}(x)|> \frac{\delta}{2} \big\}\right) \\ & \leq \frac{4}{\delta^2} \int_{\R^d} |g_d(t,x) - \phi_{\bar{\varepsilon},d,t}(x)|^2 \rho^d_t(dx)
\\ & \leq \frac{4}{\delta^2} \int_{\R^d} |\bar{\varepsilon}c d^{q} (1+\|x\|^p) |^2 \rho^d_t(dx) 
\\ & \leq \frac{8(\bar{\varepsilon}c d^{q})^2}{\delta^2} \left(1+\int_{\R^d} \|x\|^{2p} \rho^d_t(dx) \right)
\\ & \leq \frac{8(1+c_t)c^2 \bar{\varepsilon}^2 d^{2q+q_t}}{\delta^2}.
\end{aligned} 
\end{equation}

Similarly, Markov's inequality and \eqref{eq:auxEq32} yield
\begin{equation}
\label{eq:auxEq16}
\begin{aligned}
\rho^d_t(B) = \rho^d_t & \left(\big\{x \in \R^d \, \colon   \, |\E[V_d(t+1,X_{t+1}^d)|X_t^d=x]- \gamma_{\bar{\varepsilon},d,t}(x)|> \frac{\delta}{2} \big\}\right) \\ & \leq \frac{4}{\delta^2} \int_{\R^d} |\E[V_d(t+1,X_{t+1}^d)|X_t^d=x]- \gamma_{\bar{\varepsilon},d,t}(x)|^2 \rho^d_t(dx)
\\ & \leq  \frac{12}{\delta^2} \tilde{c}_2 d^{\tilde{q}_2} [\bar{\varepsilon}^2 + N^{-1}\bar{\varepsilon}^{-2\mathfrak{r}_{t+1}}].
\end{aligned} 
\end{equation}
Putting together \eqref{eq:auxEq10}, \eqref{eq:auxEq11}, \eqref{eq:auxEq13} and  \eqref{eq:auxEq16} and inserting the choices \eqref{eq:NandDelaChoice} we obtain
\begin{equation}
\label{eq:auxEq40}
\begin{aligned}
& \left(\int_{\R^d} |\E[V_d(t+1,X_{t+1}^d)|X_t^d=x]  - \phi_{\bar{\varepsilon},d,t}(x) + \delta|^2 \mathbbm{1}_{C_t \cap \hat{C}_t^c}(x)  \rho^d_t(dx)\right)^{1/2}
\\ & \quad \leq 4 \max(\hat{c}_{t+1},c) d^{\hat{q}_{t+1}}\left[1+ \tilde{c}_1 d^{\tilde{q}_1} 2 (1+ c_t^{\frac{1}{4}}d^{\frac{q_t}{4}})\right]  \left(\rho^d_t(A) + \rho^d_t(B)\right)^{\frac{1}{4}}
\\ & \quad \leq 4 \max(\hat{c}_{t+1},c) d^{\hat{q}_{t+1}+\tilde{q}_1+\frac{q_t}{4}}\left[1+ 2\tilde{c}_1   (1+ c_t^{\frac{1}{4}})\right] 
\\ & \quad \quad \cdot  \left(\frac{8(1+c_t)c^2 \bar{\varepsilon}^2 d^{2q+q_t}}{\delta^2} + \frac{12}{\delta^2} \tilde{c}_2 d^{\tilde{q}_2} [\bar{\varepsilon}^2 + N^{-1}\bar{\varepsilon}^{-2\mathfrak{r}_{t+1}}]\right)^{\frac{1}{4}}
\\ & \quad \leq  \tilde{c}_5 d^{\tilde{q}_5} \bar{\varepsilon}^{\frac{1}{4}}  
\end{aligned}
\end{equation}
with $\tilde{c}_5 = 4 \max(\hat{c}_{t+1},c) (1+ 2\tilde{c}_1  (1+ c_t^{\frac{1}{4}})) \left(8(1+c_t)c^2   + 24 \tilde{c}_2  \right)^{\frac{1}{4}}$ and $\tilde{q}_5 =\hat{q}_{t+1}+\tilde{q}_1+\frac{q_t}{4}+\frac{1}{4}\max(2q+q_t,\tilde{q}_2) $.

\textbf{10.c) Bounding the approximation error on $C_t^c \cap \hat{C}_t$:}
We are now concerned with the third term in \eqref{eq:auxEq7}. Observe that
\begin{equation}
\label{eq:auxEq17}
\begin{aligned}
| g_d(t,x) - \gamma_{\bar{\varepsilon},d,t}(x) |\mathbbm{1}_{C_t^c \cap \hat{C}_t}(x) \leq | g_d(t,x) - \gamma_{\bar{\varepsilon},d,t}(x) |(\mathbbm{1}_{A}(x) + \mathbbm{1}_{B}(x) + \mathbbm{1}_{A^c \cap B^c \cap C_t^c \cap \hat{C}_t}(x)).
\end{aligned}
\end{equation}
For $x \in A^c \cap \hat{C}_t$ we have  
\begin{equation}
\label{eq:auxEq19}
\begin{aligned}
\gamma_{\bar{\varepsilon},d,t}(x) + \frac{3}{2}\delta > \phi_{\bar{\varepsilon},d,t}(x)  + \frac{\delta}{2} \geq g_d(t,x)
\end{aligned}
\end{equation}
and therefore for $x \in A^c \cap B^c \cap C_t^c \cap \hat{C}_t$ it follows that
\begin{equation}
\label{eq:auxEq18}
\begin{aligned}
| g_d(t,x) - \gamma_{\bar{\varepsilon},d,t}(x) | & \leq | g_d(t,x) - \gamma_{\bar{\varepsilon},d,t}(x) - \frac{3}{2}\delta | + \frac{3}{2} \delta
\\ & = \gamma_{\bar{\varepsilon},d,t}(x) + 3 \delta - g_d(t,x)
\\ & \leq \gamma_{\bar{\varepsilon},d,t}(x) + 3 \delta - \E[V_d(t+1,X_{t+1}^d)|X_t^d=x]
\\ & \leq  \frac{7}{2} \delta.
\end{aligned}
\end{equation}
Combining this with \eqref{eq:auxEq17} we obtain
\begin{equation}
\label{eq:auxEq20}
\begin{aligned}
| g_d(t,x) - \gamma_{\bar{\varepsilon},d,t}(x) |\mathbbm{1}_{C_t^c \cap \hat{C}_t}(x) \leq | g_d(t,x) - \gamma_{\bar{\varepsilon},d,t}(x) |(\mathbbm{1}_{A}(x) + \mathbbm{1}_{B}(x)) + \frac{7}{2} \delta
\end{aligned}
\end{equation}
and consequently the growth bounds \eqref{eq:gGrowth}, \eqref{eq:condMomentGrowth2} and \eqref{eq:VGrowthLemma} on $g_d$, on the conditional moments and on $V_d$, Hölder's inequality and the approximation error bound \eqref{eq:auxEq32} for the continuation value  yield
\begin{equation}
\label{eq:auxEq38}
\begin{aligned}
& \left(\int_{\R^d} | g_d(t,x) - \gamma_{\bar{\varepsilon},d,t}(x) |^2\mathbbm{1}_{C_t^c \cap \hat{C}_t}(x)  \rho^d_t(dx)\right)^{1/2}
\\ & \quad \leq 
\left(\int_{\R^d} [| g_d(t,x) - \gamma_{\bar{\varepsilon},d,t}(x) |(\mathbbm{1}_{A}(x) + \mathbbm{1}_{B}(x))]^2  \rho^d_t(dx)\right)^{1/2} + \frac{7}{2}\delta 
\\ & \quad \leq 
\left(\int_{\R^d} [| g_d(t,x) - \E[V_d(t+1,X_{t+1}^d)|X_t^d=x] |(\mathbbm{1}_{A}(x) + \mathbbm{1}_{B}(x))]^2  \rho^d_t(dx)\right)^{1/2} 
\\ & \quad \quad + \left(\int_{\R^d} [| \E[V_d(t+1,X_{t+1}^d)|X_t^d=x] - \gamma_{\bar{\varepsilon},d,t}(x) |(\mathbbm{1}_{A}(x) + \mathbbm{1}_{B}(x))]^2  \rho^d_t(dx)\right)^{1/2} + \frac{7}{2}\delta 
\\ & \quad \leq 
2 \hat{c}_{t+1} d^{\hat{q}_{t+1}} \left(  \int_{\R^d} \left(1+\sum_{s=t}^{t+1} \E[\|X_s^d\||X_t^d=x]\right)^2 (\mathbbm{1}_{A}(x) + \mathbbm{1}_{B}(x))^2  \rho^d_t(dx)\right)^{1/2} 
\\ & \quad \quad + 2\left(\int_{\R^d}|\E[V_d(t+1,X_{t+1}^d)|X_t^d=x] - \gamma_{\bar{\varepsilon},d,t}(x) |^2  \rho^d_t(dx)\right)^{1/2} + \frac{7}{2}\delta 
\\ & \quad \leq 2 \hat{c}_{t+1} d^{\hat{q}_{t+1}} \left(\int_{\R^d} \left[1+2\tilde{c}_1 d^{\tilde{q}_1} (1+\|x\|) \right]^4 \rho^d_t(dx) \right)^{\frac{1}{4}}  \left[\left(\rho^d_t(A)\right)^{\frac{1}{4}}+\left(\rho^d_t(B)\right)^{\frac{1}{4}}\right]
 \\ & \quad \quad + 2 \left(3 \tilde{c}_2 d^{\tilde{q}_2} [\bar{\varepsilon}^2 + N^{-1}\bar{\varepsilon}^{-2\mathfrak{r}_{t+1}}]\right)^{1/2} + \frac{7}{2}\delta.
 \end{aligned}
 \end{equation}
Inserting the bound \eqref{eq:CondMomentIntegrable} on the moments of $\rho^d_t$, the bounds \eqref{eq:auxEq13}, \eqref{eq:auxEq16} on $\rho^d_t(A), \rho^d_t(B)$ and the choices \eqref{eq:NandDelaChoice} for $N$ and $\delta$ thus shows that
\begin{equation}
\label{eq:auxEq39}
\begin{aligned} 
& \left(\int_{\R^d} | g_d(t,x) - \gamma_{\bar{\varepsilon},d,t}(x) |^2\mathbbm{1}_{C_t^c \cap \hat{C}_t}(x)  \rho^d_t(dx)\right)^{1/2}
 \\ & \quad \leq 2 \hat{c}_{t+1} d^{\hat{q}_{t+1}} (1+ 2\tilde{c}_1 d^{\tilde{q}_1}(1+c_t^{\frac{1}{4}} d^{\frac{q_t}{4}})) \left[\left(\rho^d_t(A)\right)^{\frac{1}{4}}+\left(\rho^d_t(B)\right)^{\frac{1}{4}}\right]
+ 2 \left(6 \tilde{c}_2 d^{\tilde{q}_2} \bar{\varepsilon}^2 \right)^{1/2} + \frac{7}{2}\delta 
 \\ & \quad \leq 2 \hat{c}_{t+1} d^{\hat{q}_{t+1}+\tilde{q}_1+\frac{q_t}{4}} (1+ 2\tilde{c}_1 (1+c_t^{\frac{1}{4}})) \left[\left(\frac{8(1+c_t)c^2 \bar{\varepsilon}^2 d^{2q+q_t}}{\delta^2} \right)^{\frac{1}{4}}+\left(\frac{24}{\delta^2} \tilde{c}_2 d^{\tilde{q}_2} \bar{\varepsilon}^2 \right)^{\frac{1}{4}}\right]
\\ & \quad \quad +  2 \left(6 \tilde{c}_2 d^{\tilde{q}_2} \bar{\varepsilon}^2 \right)^{1/2} + \frac{7}{2} \delta 
 \\ & \quad \leq \tilde{c}_6 d^{\tilde{q}_6} \bar{\varepsilon}^{\frac{1}{4}} 
\end{aligned}
\end{equation}
with $\tilde{c}_6 = 2 \hat{c}_{t+1} (1+ 2\tilde{c}_1 (1+c_t^{\frac{1}{4}})) [(8(1+c_t)c^2 )^{\frac{1}{4}}+(24 \tilde{c}_2  )^{\frac{1}{4}}] +  2  (6 \tilde{c}_2   )^{1/2} + \frac{7}{2}$ and $\tilde{q}_6 = \hat{q}_{t+1}+\frac{1}{2}q+\tilde{q}_1+\frac{q_t}{2}+\frac{\tilde{q}_2}{2}$. 

\textbf{10.d) Combining the individual error estimates:}
Finally, note that the second and last line of \eqref{eq:auxEq13} yield
\begin{equation}
\begin{aligned}
\label{eq:auxEq42}
\int_{\R^d} |g_d(t,x) - \phi_{\bar{\varepsilon},d,t}(x)|^2 \rho^d_t(dx) \leq 2(1+c_t)c^2 \bar{\varepsilon}^2 d^{2q+q_t} .
\end{aligned} 
\end{equation} 
Consequently, combining the decomposition \eqref{eq:auxEq7} with the individual estimates \eqref{eq:auxEq32}, \eqref{eq:auxEq40}, \eqref{eq:auxEq39}
 and \eqref{eq:auxEq42} we obtain
 \begin{equation}
 \begin{aligned}
\label{eq:approxErrorEstimate}
& \left(\int_{\R^d} |V_d(t,x) - \psi_{\varepsilon,d,t}(x)|^2 \rho^d_t(d x) \right)^{1/2} 
\\ & \leq 
 \left(\int_{\R^d}| g_d(t,x) -\phi_{\bar{\varepsilon},d,t}(x) + \delta |^2\mathbbm{1}_{C_t^c \cap \hat{C}_t^c}(x) \rho^d_t(d x) \right)^{1/2} 
 \\ & \quad +
 \left(\int_{\R^d}  |\E[V_d(t+1,X_{t+1}^d)|X_t^d=x] - \phi_{\bar{\varepsilon},d,t}(x) + \delta|^2 \mathbbm{1}_{C_t \cap \hat{C}_t^c}(x) \rho^d_t(d x) \right)^{1/2}
\\ & \quad + \left(\int_{\R^d}  | g_d(t,x) -\gamma_{\bar{\varepsilon},d,t}(x) |^2\mathbbm{1}_{C_t^c \cap \hat{C}_t}(x) \rho^d_t(d x) \right)^{1/2}
\\ & \quad  + \left(\int_{\R^d}  | \E[V_d(t+1,X_{t+1}^d)|X_t^d=x] -\gamma_{\bar{\varepsilon},d,t}(x)  |^2\mathbbm{1}_{C_t \cap \hat{C}_t}(x) \rho^d_t(d x) \right)^{1/2}
\\ & \leq (2(1+c_t))^{\frac{1}{2}} c \bar{\varepsilon} d^{q+\frac{q_t}{2}} + \delta + \tilde{c}_5 d^{\tilde{q}_5} \bar{\varepsilon}^{\frac{1}{4}}   + \tilde{c}_6 d^{\tilde{q}_6} \bar{\varepsilon}^{\frac{1}{4}} + \bar{\varepsilon} d^{\frac{\tilde{q}_2}{2}}\left(6 \tilde{c}_2 \right)^{\frac{1}{2}}
\\ & \leq \tilde{c}_7 \bar{\varepsilon}^{\frac{1}{4}} d^{\tilde{q}_7}
\end{aligned} 
\end{equation}
with $\tilde{c}_7 =  (2(1+c_t))^{\frac{1}{2}} c  + 1 + \tilde{c}_5    + \tilde{c}_6 + \left(6 \tilde{c}_2 \right)^{\frac{1}{2}}$ and $\tilde{q}_7 = \max(q+\frac{q_t}{2},\tilde{q}_5,\tilde{q}_6,\frac{\tilde{q}_2}{2})$. Now choose  
\begin{align}
\label{eq:epsbarchoice}
\bar{\varepsilon} & = \varepsilon^{4} \left[\tilde{c}_7 d^{\tilde{q}_7} \right]^{-4}.
\end{align}
Inserting \eqref{eq:epsbarchoice} in \eqref{eq:approxErrorEstimate} proves that \eqref{eq:approxErrort} is satisfied. Furthermore, choosing 
\begin{align}
\label{eq:kappachoice} 
\kappa_{t} & = \max(\tilde{c}_3 (\tilde{c}_7)^{4\tilde{r}_3},\tilde{c}_4(\tilde{c}_7)^{4\tilde{r}_4},c \max(\kappa_{t+1},1)),
\\ \label{eq:pchoice} 
\mathfrak{q}_{t} & = \max(\tilde{q}_3+4\tilde{r}_3\tilde{q}_7 ,\tilde{q}_4+4\tilde{r}_4\tilde{q}_7,\mathfrak{q}_{t+1}+q),
\\ \label{eq:qchoice} 
\mathfrak{r}_{t} & = \max(4\tilde{r}_3,4\tilde{r}_4),
\end{align}
we obtain from \eqref{eq:auxEq44}, \eqref{eq:sizeVhat} and \eqref{eq:psiLipschitzConstant}  that \eqref{eq:NNgrowth}, \eqref{eq:NNsparse1} and \eqref{eq:NNlipschitz1} are satisfied. This completes the induction step. Hence, the statement follows.
\end{proof}

\begin{proof}[Proof of Corollary~\ref{cor:Lipschitz}] Fix $d \in \N$, $h \in [-R,R]^d$ and set $\rho^d = \frac{1}{2}\nu^d_0 + \frac{1}{2} \nu^d_h $, where $\nu^d_x$ denotes a multivariate normal distribution on $\R^d$ with mean $x$ and identity covariance. Then we estimate
	\[
	\int_{\R^d} \|x\|^{2\max(p,2)} \rho^d(dx) \leq \frac{1}{2}(1+2^{2\max(p,2)-1}) \int_{\R^d} \|x\|^{2\max(p,2)} \nu^d_0(dx) + \frac{1}{4} (2\|h\|)^{2\max(p,2)}
	\]
	and thus $\|h\|\leq R d^{1/2} $ and	\eqref{eq:auxEq59} show that 
	there exist $c >0$, $q \geq 0$ only depending on $p$ and $R$ such that the bound $\int_{\R^d} \|x\|^{2\max(p,2)} \rho^d(dx) \leq c d^q$ holds.  Hence, we can apply Theorem~\ref{thm:DNNapprox} and obtain for all $\varepsilon \in (0,1]$, $t \in \{0,\ldots,T\}$ the existence of a neural network $\psi_{\varepsilon,d,t}$ such that \eqref{eq:approxError} holds. From the proof of Theorem~\ref{thm:DNNapprox} we obtain that these networks satisfy the Lipschitz condition \eqref{eq:NNlipschitz1}. Therefore, for any $\varepsilon >0$ we may use Minkowski's inequality, the bound $\|\cdot\|_{L^2(\nu_y^d)} \leq \sqrt{2}\|\cdot\|_{L^2(\rho^d)}$ for $y \in \{0,h\}$, the approximation error bound \eqref{eq:approxError} and the Lipschitz property \eqref{eq:NNlipschitz1} to estimate
	\begin{equation}
	\label{eq:L2Lipschitz}
	\begin{aligned}
	& \|V_d(t,\cdot) - V_d(t,\cdot+h)\|_{L^2(\nu_0^d)} 
	\\ & \leq \|V_d(t,\cdot) - \psi_{\varepsilon,d,t}\|_{L^2(\nu_0^d)} + \|\psi_{\varepsilon,d,t} - \psi_{\varepsilon,d,t}(\cdot+h)\|_{L^2(\nu_0^d)} + \|\psi_{\varepsilon,d,t}(\cdot+h)-V_d(t,\cdot+h)\|_{L^2(\nu_0^d)} 
	\\ & \leq \sqrt{2} \|V_d(t,\cdot) - \psi_{\varepsilon,d,t}\|_{L^2(\rho^d)} + \|\psi_{\varepsilon,d,t} - \psi_{\varepsilon,d,t}(\cdot+h)\|_{L^2(\nu_0^d)} + \|\psi_{\varepsilon,d,t}-V_d(t,\cdot)\|_{L^2(\nu_h^d)}
	\\ & \leq 2 \sqrt{2} \varepsilon + \|h\| \kappa_t d^{\mathfrak{q}_t}.
	\end{aligned}
	\end{equation}
	This holds for any $\varepsilon >0 $ and from the statement in Step 2 of the proof of Theorem~\ref{thm:DNNapprox} the constants $\kappa_t$, $\mathfrak{q}_t$ do not depend on $d$, $\varepsilon$ and $h$ (but they depend on $R$); letting $\varepsilon$ tend to $0$ therefore yields the claimed statement.
\end{proof}

\subsection{Proof of Theorem~\ref{thm:refined}}
\label{subsec:DNNapproxProofrefined}
This subsection is devoted to the proof of Theorem~\ref{thm:refined}. It is based on the proof of Theorem~\ref{thm:DNNapprox} given in the previous subsection. 

\begin{proof}[Proof of Theorem~\ref{thm:refined}]
$ $\\
Proving this result just requires slight modifications in the proof of Theorem~\ref{thm:DNNapprox}. W.l.o.g.\ we may assume $c\geq h$, $q \geq \bar{q}$. 
In Step~1 we only need to choose $c_0,\ldots,c_T$ differently. Indeed, let $c_0 = c$, $c_{t+1} =  h(1+c_t)$ and set $q_t = \bar{q}(t+1)$. 
In Step 2, the stronger statement is modified accordingly: we will prove that for any $t \in \{0,\ldots,T\}$ there exist constants $\kappa_t,\mathfrak{q}_t,\mathfrak{r}_t \in [0,\infty)$ such 
that
for any family of probability measures $\rho_t^d$ on $\R^d$, $d \in \N$, with 
\begin{equation}
\label{eq:CondMomentIntegrable3}
\int_{\R^d} \|x\|^{2m\max(p,2)} \rho^d_t(dx) \leq c_t d^{q_t}
\end{equation}
and for all $d \in \N$, $\varepsilon \in (0,1]$ there exists a neural network $\psi_{\varepsilon,d,t}$ such that the approximation error estimate \eqref{eq:approxErrort} holds and $\psi_{\varepsilon,d,t}$ satisfies the growth \eqref{eq:NNgrowth}
and size conditions \eqref{eq:NNsparse1} and the modified Lipschitz condition
\begin{align} 
 \label{eq:NNlipschitz2} 
\mathrm{Lip}(\psi_{\varepsilon,d,t}) & \leq \kappa_t d^{\mathfrak{q}_t} \varepsilon^{-\zeta(T-t)}.
\end{align}
For $t=T$ condition \eqref{eq:NNlipschitz2} coincides with \eqref{eq:NNlipschitz1} and so the base case (Step 3) remains the same as in the proof of Theorem~\ref{thm:DNNapprox}. Due to the assumption \eqref{eq:fGrowthbdd} also Steps~4 and 5 only require slight modifications: \eqref{eq:auxEq30} becomes 
\begin{equation}
\label{eq:auxEq55}
\begin{aligned}
\int_{\R^d} \|z\|^{2m\max(p,2)} \rho_{t+1}^d(dz) & = \int_{\R^d} \int_{\R^d} \|f_t^d(x,y)\|^{2m\max(p,2)} \nu_{t}^d(dy) \rho^d_t(dx)
\\ & =  \int_{\R^d} \E[ \|f_t^d(x,Y^d_t)\|^{2m\max(p,2)}] \rho^d_t(dx)
\\ & \leq  hd^{\bar{q}} \left( 1 + \int_{\R^d}\|x\|^{2m\max(p,2)} \rho^d_t(dx) \right)
\\ & \leq h d^{q_t+\bar{q}} \left( 1 + c_t \right)
\\ & = c_{t+1} d^{q_{t+1}},
\end{aligned}
\end{equation}
 the Lipschitz condition \eqref{eq:NNlipschitz1t+1} is replaced by 
\begin{align} 
\label{eq:NNlipschitzt+1bdd} 
\mathrm{Lip}(\psi_{\varepsilon,d,t+1}) & \leq \kappa_{t+1} d^{\mathfrak{q}_{t+1}} \varepsilon^{-\zeta(T-t-1)}
\end{align}
and we modify the choice of $N$ in \eqref{eq:NandDelaChoice} to $N 
=\lceil \bar{\varepsilon}^{-2 \mathfrak{r}_{t+1}-2-2\theta} \rceil$. 

For the beginning of Step~6 and for Step~6.a) we proceed precisely as above and obtain the error estimates \eqref{eq:auxEq14}, \eqref{eq:auxEq25} and \eqref{eq:auxEq26}. In 6.b) the Lipschitz property \eqref{eq:NNlipschitzt+1bdd}  of the network now yields the additional factor $\bar{\varepsilon}^{-2\zeta(T-t-1)}$ and the approximation property \eqref{eq:fapprox} only holds on $[-(\bar{\varepsilon}^{-\beta}),\bar{\varepsilon}^{-\beta}]^d$, see \eqref{eq:fapproxBdd}. Hence, we estimate
\begin{equation}
\label{eq:auxEq46}
\begin{aligned}
\int_{\R^d} & | \E[ \hat{v}_{\bar{\varepsilon},d,t+1}(f^d_t(x,Y_t^d))]-\E[ \hat{v}_{\bar{\varepsilon},d,t+1}(\eta_{\bar{\varepsilon},d,t}(x,Y^{d}_t))]|^2 \rho^d_t(dx)
\\ & \leq (\kappa_{t+1} d^{\mathfrak{q}_{t+1}} \bar{\varepsilon}^{-\zeta(T-t-1)})^2 \int_{\R^d} | \E[ \| f^d_t(x,Y_t^d)- \eta_{\bar{\varepsilon},d,t}(x,Y^{d}_t)\|]|^2 \rho^d_t(dx)
\\ & \leq 2(\kappa_{t+1} d^{\mathfrak{q}_{t+1}} \bar{\varepsilon}^{-\zeta(T-t-1)})^2 \Bigg[ (\bar{\varepsilon} c d^{q})^2 \int_{[-(\bar{\varepsilon}^{-\beta}),\bar{\varepsilon}^{-\beta}]^d} |1+\|x\|^p + \E[\|Y^{d}_t\|^p]|^2 \rho^d_t(dx) 
\\ & \quad +  \int_{\R^d \setminus [-(\bar{\varepsilon}^{-\beta}),\bar{\varepsilon}^{-\beta}]^d} | \E[ \| f^d_t(x,Y_t^d)- \eta_{\bar{\varepsilon},d,t}(x,Y^{d}_t)\|]|^2 \rho^d_t(dx)
\\ & \quad + \int_{\R^d} | \E[ \| f^d_t(x,Y_t^d)- \eta_{\bar{\varepsilon},d,t}(x,Y^{d}_t)\|\mathbbm{1}_{\{Y^{d}_t \in \R^d \setminus [-(\bar{\varepsilon}^{-\beta}),\bar{\varepsilon}^{-\beta}]^d\}}]|^2 \rho^d_t(dx)\Bigg].
\end{aligned}
\end{equation}
The first term can be bounded as before. For the second term, note that Jensen's inequality and \eqref{eq:fGrowthbdd} imply that $	\E[\|f_t^d(x,Y_t^d)\|] \leq  c d^{q}  (1+\|x\|)$. Hence, we may apply Hölder's inequality, the growth bound \eqref{eq:etaGrowthbdd} on $\eta_{\bar{\varepsilon},d,t}$, Markov's inequality and \eqref{eq:CondMomentIntegrable3} to obtain
\begin{equation}
\label{eq:auxEq48}
\begin{aligned}
& \int_{\R^d \setminus [-(\bar{\varepsilon}^{-\beta}),\bar{\varepsilon}^{-\beta}]^d} | \E[ \| f^d_t(x,Y_t^d)- \eta_{\bar{\varepsilon},d,t}(x,Y^{d}_t)\|]|^2 \rho^d_t(dx)
\\ & \quad \leq \left(\int_{\R^d} | \E[ \| f^d_t(x,Y_t^d)- \eta_{\bar{\varepsilon},d,t}(x,Y^{d}_t)\|]|^4 \rho^d_t(dx)\right)^{\frac{1}{2}} \left(\rho^d_t(\R^d \setminus [-(\bar{\varepsilon}^{-\beta}),\bar{\varepsilon}^{-\beta}]^d)\right)^{\frac{1}{2}}
\\ & \quad \leq \left(\int_{\R^d} | \E[2c d^{q} \bar{\varepsilon}^{-\theta} (2+\|x\|+\|Y_t^d\|)]|^4 \rho^d_t(dx)\right)^{\frac{1}{2}}  \left(\frac{\int_{\R^d} \|x\|_\infty^{2m\max(2,p)} \rho^d_t(dx)}{(\bar{\varepsilon}^{-\beta})^{2m\max(2,p)}}\right)^{\frac{1}{2}}
\\ & \quad \leq  3(2 c d^{q})^2 \bar{\varepsilon}^{-2\theta}  \left(4+\E[\|Y_t^d\|]^2 + \left(\int_{\R^d} \|x\|^4 \rho^d_t(dx)\right)^{\frac{1}{2}}\right) \bar{\varepsilon}^{\beta m \max(2,p)}
\\ & \quad \quad \cdot  \left(\int_{\R^d} \|x\|^{2m\max(2,p)} \rho^d_t(dx)\right)^{\frac{1}{2}}
\\ & \quad \leq 3 \bar{\varepsilon}^{\beta m \max(2,p)-2\theta} (2 c d^{q})^2 \left(4+(cd^q)^2 + \left(c_t d^{q_t}\right)^{\frac{1}{2}}\right)  (c_t d^{q_t})^{\frac{1}{2}}.
\end{aligned}
\end{equation}
For the last term in \eqref{eq:auxEq46} we note that \eqref{eq:fGrowthbdd} and Jensen's inequality imply 
\[ 	\E[\|f_t^d(x,Y_t^d)\|^2] 
\leq (cd^q (1+\|x\|^{2m\max(p,2)}))^{\frac{1}{m\max(p,2)}} \leq  c d^{q}  (1+\|x\|)^2.
\]
Using this, Hölder's inequality, \eqref{eq:etaGrowthbdd} and Markov's inequality we obtain 
\begin{equation}
\label{eq:auxEq56}
\begin{aligned}
& \int_{\R^d} | \E[ \| f^d_t(x,Y_t^d)- \eta_{\bar{\varepsilon},d,t}(x,Y^{d}_t)\|\mathbbm{1}_{\{Y^{d}_t \in \R^d \setminus [-(\bar{\varepsilon}^{-\beta}),\bar{\varepsilon}^{-\beta}]^d\}}]|^2 \rho^d_t(dx)
\\ & \quad \leq \int_{\R^d}  \E[ \| f^d_t(x,Y_t^d)- \eta_{\bar{\varepsilon},d,t}(x,Y^{d}_t)\|^2] \rho^d_t(dx)  \P({Y^{d}_t \in \R^d \setminus [-(\bar{\varepsilon}^{-\beta}),\bar{\varepsilon}^{-\beta}]^d})
\\ & \quad \leq 2 \int_{\R^d}  \E[2 (c d^{q} \bar{\varepsilon}^{-\theta})^2 (2+\|x\|+\|Y_t^d\|)^2] \rho^d_t(dx)  \left(\frac{1}{(\bar{\varepsilon}^{-\beta})^{2m\max(2,p)}}\E[ \|Y_t^d\|_\infty^{2m\max(2,p)}]\right)
\\ & \quad \leq  3(2 c d^{q})^2 \left(4+\E[\|Y_t^d\|^2] + \left(\int_{\R^d} \|x\|^4 \rho^d_t(dx)\right)^{\frac{1}{2}}\right) \bar{\varepsilon}^{2m\beta \max(2,p)-2\theta} \E[ \|Y_t^d\|^{2m\max(2,p)}]
\\ & \quad \leq 3 \bar{\varepsilon}^{\beta m \max(2,p)-2\theta} (2 c d^{q})^2 \left(4+cd^q + \left(c_td^{q_t}\right)^{\frac{1}{2}}\right)  cd^q.
\end{aligned}
\end{equation}

Together this yields
\begin{equation}
\label{eq:auxEq47}
\begin{aligned}
\int_{\R^d} & | \E[ \hat{v}_{\bar{\varepsilon},d,t+1}(f^d_t(x,Y_t^d))]-\E[ \hat{v}_{\bar{\varepsilon},d,t+1}(\eta_{\bar{\varepsilon},d,t}(x,Y^{d}_t))]|^2 \rho^d_t(dx)
\\ & \leq 6 (\bar{\varepsilon}^{1-\zeta(T-t-1)} c \kappa_{t+1})^2 d^{2(q+\mathfrak{q}_{t+1})+\max(2q,\frac{pq_t}{\max(p,2)})}  \left(1+ c^2 + c_t^{\frac{p}{\max(p,2)}} \right)
\\ & \quad + 48 \bar{\varepsilon}^{\beta m \max(2,p)-2\theta-2\zeta(T-t-1)} (\kappa_{t+1})^2 d^{4q+2\mathfrak{q}_{t+1}+q_t} c^2 \left(4 +c^2 +  c_t^\frac{1}{2}\right) c_t.  
\end{aligned}
\end{equation}
Similarly, in Step 6.c) the factor  $\bar{\varepsilon}^{-2\mathfrak{r}_{t+1}}$ is replaced by $\bar{\varepsilon}^{-2\mathfrak{r}_{t+1}-2\theta}$ due to the growth bound \eqref{eq:etaGrowthbdd}. 
This and the modified bound \eqref{eq:auxEq47} (as opposed to \eqref{eq:auxEq27}) then also leads to a different estimate in 
Step 6.d), where we obtain
\begin{equation}
\label{eq:auxEq49}
\begin{aligned}
& \int_{\R^d}  \E[| \E[V_d(t+1,X_{t+1}^d)|X_t^d=x] -\Gamma_{\bar{\varepsilon},d,t}(x) |^2] \rho^d_t(dx)  
\\ & < \tilde{c}_2 d^{\tilde{q}_2} [\bar{\varepsilon}^{2-2\zeta(T-t-1)} + \bar{\varepsilon}^{\beta m \max(2,p)-2\theta - 2\zeta(T-t-1)}+ N^{-1}\bar{\varepsilon}^{-2\mathfrak{r}_{t+1}-2\theta}]
\end{aligned}
\end{equation}
with slightly modified constants $\tilde{c}_2 = 96 c^2\kappa_{t+1}^2(4+c^2 + c_t)c_t$ and $\tilde{q}_2 = 4q+2\mathfrak{q}_{t+1}+\max(2q,q_t)$. Thus, the same argument as before yields
the existence of
$\omega \in \Omega$ such that  $\gamma_{\bar{\varepsilon},d,t}$, the  realization of $\Gamma_{\bar{\varepsilon},d,t}$ at $\omega$,  satisfies
\begin{equation}
\label{eq:auxEq50}
\begin{aligned}
\int_{\R^d} & | \E[V_d(t+1,X_{t+1}^d)|X_t^d=x] - \gamma_{\bar{\varepsilon},d,t}(x) |^2 \rho^d_t(dx)
\\ & \leq 3 \tilde{c}_2 d^{\tilde{q}_2} [\bar{\varepsilon}^{2-2\zeta(T-t-1)} + \bar{\varepsilon}^{\beta m \max(2,p)-2\theta-2\zeta(T-t-1)}+ N^{-1}\bar{\varepsilon}^{-2\mathfrak{r}_{t+1}-2\theta}]
\end{aligned}
\end{equation}
 and \eqref{eq:Ybound} holds. In Step 7 the modified growth bound \eqref{eq:etaGrowthbdd} and the modified choice  $N 
 =\lceil \bar{\varepsilon}^{-2 \mathfrak{r}_{t+1}-2-2\theta} \rceil$ lead to an additional factor $\bar{\varepsilon}^{-3\theta}$ in \eqref{eq:auxEq44} and to a modified choice  $\tilde{r}_3 = 3\mathfrak{r}_{t+1}+2+3\theta$.
 Similarly, in Step 8 the modified choice of $N$ leads to an additional factor $\bar{\varepsilon}^{-2\theta}$ and to a modified choice  $\tilde{r}_4 = 2 \mathfrak{r}_{t+1}+2+\max(\alpha,\mathfrak{r}_{t+1})+2\theta$. Next, for the Lipschitz constant of the constructed network, i.e.\ Step 9, we need to verify \eqref{eq:NNlipschitz2}. To do this, we note that the induction hypothesis \eqref{eq:NNlipschitzt+1bdd} and the Lipschitz property \eqref{eq:etaLipschitzBdd} of $\eta_{\bar{\varepsilon},d,t}$ imply for all $x,y \in \R^d$ that
 \begin{equation} 
 \begin{aligned}
 \label{eq:auxEq51}
 |\gamma_{\bar{\varepsilon},d,t}(x)-\gamma_{\bar{\varepsilon},d,t}(y)| &  = \left| \frac{1}{N}\sum_{i=1}^N (\hat{v}_{\bar{\varepsilon},d,t+1}(\eta_{\bar{\varepsilon},d,t}(x,Y^{d,i}_t(\omega))) -  \hat{v}_{\bar{\varepsilon},d,t+1}(\eta_{\bar{\varepsilon},d,t}(y,Y^{d,i}_t(\omega)))) \right| 
 \\ & \leq \frac{1}{N}\sum_{i=1}^N  \kappa_{t+1} d^{\mathfrak{q}_{t+1}} \bar{\varepsilon}^{-\zeta(T-t-1)} \|\eta_{\bar{\varepsilon},d,t}(x,Y^{d,i}_t(\omega)) -  \eta_{\bar{\varepsilon},d,t}(y,Y^{d,i}_t(\omega))\| 
 \\ & \leq  \kappa_{t+1} d^{\mathfrak{q}_{t+1}} \bar{\varepsilon}^{-\zeta(T-t)} c d^q \|x -  y\|. 
 \end{aligned}
 \end{equation}
 Thus, we obtain 
 \begin{equation}
 \label{eq:psiLipschitzConstantBdd}
 \mathrm{Lip}(\psi_{\varepsilon,d,t})  \leq \max(c d^q,\kappa_{t+1} d^{\mathfrak{q}_{t+1}+q} \bar{\varepsilon}^{-\zeta(T-t)} c ) \leq c \max(\kappa_{t+1},1) d^{\mathfrak{q}_{t+1}+q} \bar{\varepsilon}^{-\zeta(T-t)}.
 \end{equation}
 Step 10 requires a different choice of $\delta$ and otherwise only minor modifications. We choose $\delta = \bar{\varepsilon}^{\frac{1}{2}(\min(1,\beta m-\theta) -\zeta(T-1))} $. Then in Step 10.b) the new bound \eqref{eq:auxEq50} leads to a slightly different bound  than in \eqref{eq:auxEq16} and \eqref{eq:auxEq40}. We obtain 
\begin{equation}
 \label{eq:auxEq52}
 \begin{aligned}
 	& \left(\int_{\R^d} |\E[V_d(t+1,X_{t+1}^d)|X_t^d=x]  - \phi_{\bar{\varepsilon},d,t}(x) + \delta|^2 \mathbbm{1}_{C_t \cap \hat{C}_t^c}(x)  \rho^d_t(dx)\right)^{1/2}
 	\\ & \quad \leq 4 \max(\hat{c}_{t+1},c) d^{\hat{q}_{t+1}+\tilde{q}_1+\frac{q_t}{4}}\left[1+ 2\tilde{c}_1  (1+ c_t^{\frac{1}{4}})\right] 
 	  \Bigg(\frac{8(1+c_t)c^2 \bar{\varepsilon}^2 d^{2q+q_t}}{\delta^2} \\ & \quad \quad + \frac{12}{\delta^2} \tilde{c}_2 d^{\tilde{q}_2} [\bar{\varepsilon}^{2-2\zeta(T-t-1)} + \bar{\varepsilon}^{\beta m \max(2,p)-2\theta-2\zeta(T-t-1)}+ N^{-1}\bar{\varepsilon}^{-2\mathfrak{r}_{t+1}-2\theta}]\Bigg)^{\frac{1}{4}}
 	\\ & \quad \leq  \tilde{c}_5 d^{\tilde{q}_5} \bar{\varepsilon}^{\frac{1}{4}(\min(1,\beta m-\theta) -\zeta(T-1))}  
 \end{aligned}
\end{equation}
with slightly modified constant $\tilde{c}_5 =  4 \hat{c}_{t+1} (1+ 2\tilde{c}_1  (1+ c_t^{\frac{1}{4}})) \left(8(1+c_t)c^2   + 36 \tilde{c}_2  \right)^{\frac{1}{4}}$ and $\tilde{q}_5$ as before.  
In Step 10.c), \eqref{eq:auxEq50} leads to analogous modifications 
in \eqref{eq:auxEq38} and \eqref{eq:auxEq39}, yielding
\begin{equation}
\label{eq:auxEq53}
\begin{aligned} 
& \left(\int_{\R^d} | g_d(t,x) - \gamma_{\bar{\varepsilon},d,t}(x) |^2\mathbbm{1}_{C_t^c \cap \hat{C}_t}(x)  \rho^d_t(dx)\right)^{1/2}
 \leq \tilde{c}_6 d^{\tilde{q}_6} \bar{\varepsilon}^{\frac{1}{4}(\min(1,\beta m-\theta) -\zeta(T-1))} 
\end{aligned}
\end{equation}
with $\tilde{c}_6 = 2 \hat{c}_{t+1} (1+ 2\tilde{c}_1 (1+c_t^{\frac{1}{4}})) [(8(1+c_t)c^2 )^{\frac{1}{4}}+(36 \tilde{c}_2  )^{\frac{1}{4}}] +  2  (9 \tilde{c}_2   )^{1/2} + \frac{7}{2}$ and $\tilde{q}_6$ as before. 
Combining \eqref{eq:auxEq50}, \eqref{eq:auxEq52}, \eqref{eq:auxEq53} and \eqref{eq:auxEq42} we thus obtain for Step 10.d) the bound
 \begin{equation}
\begin{aligned}
\label{eq:approxErrorEstimateBdd}
& \left(\int_{\R^d} |V_d(t,x) - \psi_{\varepsilon,d,t}(x)|^2 \rho^d_t(d x) \right)^{1/2}  \leq \tilde{c}_7\bar{\varepsilon}^{\frac{1}{4}(\min(1,\beta m-\theta) -\zeta(T-1))}  d^{\tilde{q}_7}
\end{aligned} 
\end{equation}
with $\tilde{c}_7 =  (2(1+c_t))^{\frac{1}{2}} c  + 1 + \tilde{c}_5    + \tilde{c}_6 + \left(9 \tilde{c}_2 \right)^{\frac{1}{2}} $ and $\tilde{q}_7$ as before. 
Choose 
\[\bar{\varepsilon}  = \left(\varepsilon^{-1}\tilde{c}_7 d^{\tilde{q}_7} \right)^{-\frac{4}{\min(1,\beta m-\theta) -\zeta(T-1))}}\]
and note that $\bar{\varepsilon} \in (0,1)$ because $\tilde{c}_7>1$ and $\frac{\min(1,\beta m-\theta)}{T-1} >\zeta$. By inserting this choice of $\bar{\varepsilon}$ in the bounds for the growth, size and Lipschitz constant of $\psi_{\varepsilon,d,t}$ we may then appropriately choose $\kappa_{t}$, $\mathfrak{q}_{t}$, $\mathfrak{r}_{t}$ (analogously to \eqref{eq:kappachoice}, \eqref{eq:pchoice}, \eqref{eq:qchoice}) and complete the proof. 
\end{proof}

{\small 
\bibliographystyle{abbrvnat}
\bibliography{references}

\begin{thebibliography}{53}
\providecommand{\natexlab}[1]{#1}
\providecommand{\url}[1]{\texttt{#1}}
\expandafter\ifx\csname urlstyle\endcsname\relax
  \providecommand{\doi}[1]{doi: #1}\else
  \providecommand{\doi}{doi: \begingroup \urlstyle{rm}\Url}\fi

\bibitem[Andersen and Broadie(2004)]{Andersen2004PrimalDual}
L.~Andersen and M.~Broadie.
\newblock Primal-dual simulation algorithm for pricing multidimensional
  american options.
\newblock \emph{Manag. Sci.}, 50\penalty0 (9):\penalty0 1222--1234, 2004.

\bibitem[Applebaum(2009)]{Applebaum2009}
D.~Applebaum.
\newblock \emph{L\'{e}vy processes and stochastic calculus}, volume 116 of
  \emph{Cambridge Studies in Advanced Mathematics}.
\newblock Cambridge University Press, Cambridge, second edition, 2009.

\bibitem[Bayer et~al.(2021{\natexlab{a}})Bayer, Belomestny, Hager, Pigato, and
  Schoenmakers]{Bayer2021a}
C.~Bayer, D.~Belomestny, P.~Hager, P.~Pigato, and J.~Schoenmakers.
\newblock Randomized optimal stopping algorithms and their convergence
  analysis.
\newblock \emph{SIAM J. Financial Math.}, 12\penalty0 (3):\penalty0 1201--1225,
  2021{\natexlab{a}}.

\bibitem[Bayer et~al.(2021{\natexlab{b}})Bayer, Hager, Riedel, and
  Schoenmakers]{Bayer21}
C.~Bayer, P.~Hager, S.~Riedel, and J.~Schoenmakers.
\newblock Optimal stopping with signatures.
\newblock \emph{Preprint, arXiv 2105.00778}, 2021{\natexlab{b}}.

\bibitem[Beck et~al.(2020)Beck, Hutzenthaler, Jentzen, and Kuckuck]{Beck2020}
C.~Beck, M.~Hutzenthaler, A.~Jentzen, and B.~Kuckuck.
\newblock An overview on deep learning-based approximation methods for partial
  differential equations.
\newblock \emph{Preprint, arXiv 2012.12348}, 2020.

\bibitem[Becker et~al.(2019)Becker, Cheridito, and Jentzen]{Becker2019}
S.~Becker, P.~Cheridito, and A.~Jentzen.
\newblock Deep optimal stopping.
\newblock \emph{J. Mach. Learn. Res.}, 20:\penalty0 Paper No. 74, 25, 2019.

\bibitem[Becker et~al.(2020)Becker, Cheridito, and Jentzen]{Becker2020}
S.~Becker, P.~Cheridito, and A.~Jentzen.
\newblock Pricing and hedging american-style options with deep learning.
\newblock \emph{J. Risk Financial Manag.}, 13\penalty0 (7), 2020.

\bibitem[Becker et~al.(2021)Becker, Cheridito, Jentzen, and Welti]{Becker2021a}
S.~Becker, P.~Cheridito, A.~Jentzen, and T.~Welti.
\newblock Solving high-dimensional optimal stopping problems using deep
  learning.
\newblock \emph{European J. Appl. Math.}, 32\penalty0 (3):\penalty0 470--514,
  2021.

\bibitem[Belomestny(2011)]{Belomestny2011}
D.~Belomestny.
\newblock On the rates of convergence of simulation-based optimization
  algorithms for optimal stopping problems.
\newblock \emph{Ann. Appl. Probab.}, 21\penalty0 (1):\penalty0 215--239, 2011.

\bibitem[Belomestny et~al.(2009)Belomestny, Bender, and
  Schoenmakers]{Belomestny2009}
D.~Belomestny, C.~Bender, and J.~Schoenmakers.
\newblock True upper bounds for {B}ermudan products via non-nested {M}onte
  {C}arlo.
\newblock \emph{Math. Finance}, 19\penalty0 (1):\penalty0 53--71, 2009.

\bibitem[Berner et~al.(2020)Berner, Grohs, and Jentzen]{BernerGrohsJentzen2018}
J.~Berner, P.~Grohs, and A.~Jentzen.
\newblock Analysis of the generalization error: empirical risk minimization
  over deep artificial neural networks overcomes the curse of dimensionality in
  the numerical approximation of {B}lack-{S}choles partial differential
  equations.
\newblock \emph{SIAM J. Math. Data Sci.}, 2\penalty0 (3):\penalty0 631--657,
  2020.

\bibitem[Bouchard and Warin(2012)]{Bouchard2012}
B.~Bouchard and X.~Warin.
\newblock Monte-carlo valuation of american options: Facts and new algorithms
  to improve existing methods.
\newblock In R.~A. Carmona, P.~Del~Moral, P.~Hu, and N.~Oudjane, editors,
  \emph{Numerical Methods in Finance}, pages 215--255, Berlin, Heidelberg,
  2012. Springer Berlin Heidelberg.

\bibitem[Broadie and Glasserman(2004)]{Broadie2004}
M.~Broadie and P.~Glasserman.
\newblock A stochastic mesh method for pricing high-dimensional american
  options.
\newblock \emph{J. Comput. Finance}, 7\penalty0 (4):\penalty0 35--72, 2004.

\bibitem[Buehler et~al.(2019)Buehler, Gonon, Teichmann, and Wood]{Buehler2018}
H.~Buehler, L.~Gonon, J.~Teichmann, and B.~Wood.
\newblock Deep hedging.
\newblock \emph{Quant. Finance}, 19\penalty0 (8):\penalty0 1271--1291, 2019.

\bibitem[Cioica-Licht et~al.(2022)Cioica-Licht, Hutzenthaler, and
  Werner]{Cioica2022}
P.~A. Cioica-Licht, M.~Hutzenthaler, and P.~T. Werner.
\newblock Deep neural networks overcome the curse of dimensionality in the
  numerical approximation of semilinear partial differential equations.
\newblock \emph{Preprint, arXiv 2205.14398}, 2022.

\bibitem[Cl\'{e}ment et~al.(2002)Cl\'{e}ment, Lamberton, and
  Protter]{Clement2002}
E.~Cl\'{e}ment, D.~Lamberton, and P.~Protter.
\newblock An analysis of a least squares regression method for {A}merican
  option pricing.
\newblock \emph{Finance Stoch.}, 6\penalty0 (4):\penalty0 449--471, 2002.

\bibitem[Cont and Tankov(2004)]{Cont2004}
R.~Cont and P.~Tankov.
\newblock \emph{{Financial Modelling with Jump Processes}}.
\newblock Chapman \& Hall/CRC, 2004.

\bibitem[Cuchiero et~al.(2020)Cuchiero, Khosrawi, and Teichmann]{Cuchiero2019}
C.~Cuchiero, W.~Khosrawi, and J.~Teichmann.
\newblock A generative adversarial network approach to calibration of local
  stochastic volatility models.
\newblock \emph{Risks}, 8\penalty0 (4):\penalty0 101, 2020.

\bibitem[Eberlein and Kallsen(2019)]{EberKall19}
E.~Eberlein and J.~Kallsen.
\newblock \emph{Mathematical finance}.
\newblock Springer Finance. Springer, Cham, 2019.

\bibitem[Ech-Chafiq et~al.(2022)Ech-Chafiq, Henry-Labordere, and
  Lelong]{EchChafiq21}
Z.~E.~F. Ech-Chafiq, P.~Henry-Labordere, and J.~Lelong.
\newblock Pricing bermudan options using regression trees/random forests.
\newblock \emph{Preprint, arXiv 2201.02587}, 2022.

\bibitem[Elbr\"{a}chter et~al.(2022)Elbr\"{a}chter, Grohs, Jentzen, and
  Schwab]{EGJS18_787}
D.~Elbr\"{a}chter, P.~Grohs, A.~Jentzen, and C.~Schwab.
\newblock D{NN} expression rate analysis of high-dimensional {PDE}s:
  application to option pricing.
\newblock \emph{Constr. Approx.}, 55\penalty0 (1):\penalty0 3--71, 2022.

\bibitem[F{\"o}llmer and Schied(2016)]{follmerschied}
H.~F{\"o}llmer and A.~Schied.
\newblock \emph{Stochastic finance: an introduction in discrete time}.
\newblock Walter de Gruyter, 4th rev. ed. edition, 2016.

\bibitem[Garcia(2003)]{Garcia2003}
D.~Garcia.
\newblock Convergence and biases of monte carlo estimates of american option
  prices using a parametric exercise rule.
\newblock \emph{J. Econ. Dyn. Control}, 27\penalty0 (10):\penalty0 1855--1879,
  2003.

\bibitem[Germain et~al.(2021)Germain, Pham, and Warin]{Germain2021}
M.~Germain, H.~Pham, and X.~Warin.
\newblock Neural networks-based algorithms for stochastic control and pdes in
  finance.
\newblock \emph{Preprint, arXiv 2101.08068}, 2021.

\bibitem[Gonon(2021)]{Gonon2021}
L.~Gonon.
\newblock {Random feature neural networks learn Black-Scholes type PDEs without
  curse of dimensionality}.
\newblock \emph{Preprint, arXiv 2106.08900}, 2021.

\bibitem[Gonon and Schwab(2021{\natexlab{a}})]{GS20_925}
L.~Gonon and C.~Schwab.
\newblock Deep {R}e{LU} network expression rates for option prices in
  high-dimensional, exponential {L}\'{e}vy models.
\newblock \emph{Finance Stoch.}, 25\penalty0 (4):\penalty0 615--657,
  2021{\natexlab{a}}.

\bibitem[Gonon and Schwab(2021{\natexlab{b}})]{GS21}
L.~Gonon and C.~Schwab.
\newblock Deep relu neural networks overcome the curse of dimensionality for
  partial integrodifferential equations.
\newblock \emph{To appear in Anal. Appl.; arXiv:2102.11707},
  2021{\natexlab{b}}.

\bibitem[Gonon et~al.(2021)Gonon, Grohs, Jentzen, Kofler, and
  {\v{S}}i{\v{s}}ka]{Gonon2019}
L.~Gonon, P.~Grohs, A.~Jentzen, D.~Kofler, and D.~{\v{S}}i{\v{s}}ka.
\newblock Uniform error estimates for artificial neural network approximations
  for heat equations.
\newblock \emph{IMA J. Numer. Anal.}, 42\penalty0 (3):\penalty0 1991--2054,
  2021.

\bibitem[Grohs and Herrmann(2021)]{Grohs2021}
P.~Grohs and L.~Herrmann.
\newblock Deep neural network approximation for high-dimensional parabolic
  hamilton-jacobi-bellman equations.
\newblock \emph{Preprint, arXiv 2103.05744}, 2021.

\bibitem[Grohs et~al.(2018)Grohs, Hornung, Jentzen, and von
  Wurstemberger]{HornungJentzen2018}
P.~Grohs, F.~Hornung, A.~Jentzen, and P.~von Wurstemberger.
\newblock A proof that artificial neural networks overcome the curse of
  dimensionality in the numerical approximation of {B}lack-{S}choles partial
  differential equations.
\newblock \emph{To appear in Mem. Am. Math. Soc.; arXiv:1809.02362}, 2018.

\bibitem[Haugh and Kogan(2004)]{Haugh2004}
M.~B. Haugh and L.~Kogan.
\newblock Pricing american options: A duality approach.
\newblock \emph{Oper. Res.}, 52\penalty0 (2):\penalty0 258--270, 2004.

\bibitem[Herrera et~al.(2021)Herrera, Krach, Ruyssen, and Teichmann]{Herrera21}
C.~Herrera, F.~Krach, P.~Ruyssen, and J.~Teichmann.
\newblock Optimal stopping via randomized neural networks.
\newblock \emph{Preprint, arXiv 2104.13669}, 2021.

\bibitem[Hutzenthaler et~al.(2020)Hutzenthaler, Jentzen, Kruse, and
  Nguyen]{HutzenthalerJentzenKruse2019}
M.~Hutzenthaler, A.~Jentzen, T.~Kruse, and T.~A. Nguyen.
\newblock A proof that rectified deep neural networks overcome the curse of
  dimensionality in the numerical approximation of semilinear heat equations.
\newblock \emph{SN PDE}, 1\penalty0 (2):\penalty0 10, 2020.

\bibitem[Jain and Oosterlee(2015)]{Jain2015}
S.~Jain and C.~W. Oosterlee.
\newblock The stochastic grid bundling method: Efficient pricing of bermudan
  options and their greeks.
\newblock \emph{Appl. Math. Comput.}, 269:\penalty0 412--431, 2015.

\bibitem[Jentzen et~al.(2021)Jentzen, Salimova, and Welti]{JentzenSalimova2021}
A.~Jentzen, D.~Salimova, and T.~Welti.
\newblock A proof that deep artificial neural networks overcome the curse of
  dimensionality in the numerical approximation of {K}olmogorov partial
  differential equations with constant diffusion and nonlinear drift
  coefficients.
\newblock \emph{Commun. Math. Sci.}, 19\penalty0 (5):\penalty0 1167--1205,
  2021.

\bibitem[Kallenberg()]{Kallenberg2002}
O.~Kallenberg.
\newblock \emph{{Foundations of Modern Probability}}.
\newblock Probability and Its Applications. Springer New York, second edition.

\bibitem[Kohler et~al.(2010)Kohler, Krzy\.{z}ak, and Todorovic]{Kohler2010}
M.~Kohler, A.~Krzy\.{z}ak, and N.~Todorovic.
\newblock Pricing of high-dimensional {A}merican options by neural networks.
\newblock \emph{Math. Finance}, 20\penalty0 (3):\penalty0 383--410, 2010.

\bibitem[Lapeyre and Lelong(2021)]{LapeyreLelong2021}
B.~Lapeyre and J.~Lelong.
\newblock Neural network regression for bermudan option pricing.
\newblock \emph{Monte Carlo Methods Appl.}, 27\penalty0 (3):\penalty0 227--247,
  2021.

\bibitem[Longstaff and Schwartz(2001)]{LongSchw01}
F.~A. Longstaff and E.~S. Schwartz.
\newblock Valuing {A}merican options by simulation: a simple least-squares
  approach.
\newblock \emph{Rev. Financ. Stud.}, 14\penalty0 (1):\penalty0 113--147, 2001.

\bibitem[Opschoor et~al.(2020)Opschoor, Petersen, and Schwab]{Opschoor2020}
J.~A.~A. Opschoor, P.~C. Petersen, and C.~Schwab.
\newblock Deep {R}e{LU} networks and high-order finite element methods.
\newblock \emph{Anal. Appl. (Singap.)}, 18\penalty0 (5):\penalty0 715--770,
  2020.

\bibitem[Peskir and Shiryaev(2006)]{peskir2006optimal}
G.~Peskir and A.~Shiryaev.
\newblock \emph{Optimal Stopping and Free-Boundary Problems}.
\newblock Number Bd. 10 in Lectures in Mathematics. ETH Z{\"u}rich. Springer
  Verlag NY, 2006.

\bibitem[Petersen and Voigtlaender(2018)]{PETERSEN2018296}
P.~Petersen and F.~Voigtlaender.
\newblock Optimal approximation of piecewise smooth functions using deep {ReLU}
  neural networks.
\newblock \emph{Neural Netw.}, 108:\penalty0 296 -- 330, 2018.

\bibitem[Reisinger and Zhang(2020)]{ReisingerZhang2019}
C.~Reisinger and Y.~Zhang.
\newblock Rectified deep neural networks overcome the curse of dimensionality
  for nonsmooth value functions in zero-sum games of nonlinear stiff systems.
\newblock \emph{Anal. Appl. (Singap.)}, 18\penalty0 (6):\penalty0 951--999,
  2020.

\bibitem[Reppen et~al.(2022)Reppen, Soner, and Tissot-Daguette]{Reppen22}
A.~M. Reppen, H.~M. Soner, and V.~Tissot-Daguette.
\newblock Neural optimal stopping boundary.
\newblock \emph{Preprint, arXiv 2205.04595}, 2022.

\bibitem[Revuz(1984)]{revuz1984markov}
D.~Revuz.
\newblock \emph{Markov Chains}.
\newblock Mathematical Studies. North-Holland, 1984.

\bibitem[Rogers(2002)]{Rogers2002}
L.~C.~G. Rogers.
\newblock Monte {C}arlo valuation of {A}merican options.
\newblock \emph{Math. Finance}, 12\penalty0 (3):\penalty0 271--286, 2002.

\bibitem[Ruf and Wang(2020)]{Ruf2020}
J.~Ruf and W.~Wang.
\newblock Neural networks for option pricing and hedging: a literature review.
\newblock \emph{J. Comput. Finance}, 24\penalty0 (1):\penalty0 1--46, 2020.

\bibitem[Sato(1999)]{Sato1999}
K.-I. Sato.
\newblock \emph{{L\'{e}vy processes and infinitely divisible distributions}}.
\newblock Cambridge University Press, 1999.

\bibitem[Sirignano and Spiliopoulos(2018)]{Sirignano2018}
J.~Sirignano and K.~Spiliopoulos.
\newblock D{GM: A} deep learning algorithm for solving partial differential
  equations.
\newblock \emph{J. Comput. Phys.}, 375:\penalty0 1339--1364, 2018.

\bibitem[Takahashi and Yamada(2021)]{Takahashi2021}
A.~Takahashi and T.~Yamada.
\newblock Asymptotic expansion and deep neural networks overcome the curse of
  dimensionality in the numerical approximation of kolmogorov partial
  differential equations with nonlinear coefficients.
\newblock CIRJE F-Series CIRJE-F-1167, CIRJE, Faculty of Economics, University
  of Tokyo, 2021.

\bibitem[Tsitsiklis and Van~Roy(2001)]{Tsitsiklis2001Regression}
J.~Tsitsiklis and B.~Van~Roy.
\newblock Regression methods for pricing complex american-style options.
\newblock \emph{IEEE Trans. Neural Netw. Learn. Syst.}, 12\penalty0
  (4):\penalty0 694--703, 2001.

\bibitem[Wang and Perdikaris(2021)]{Wang2021}
S.~Wang and P.~Perdikaris.
\newblock Deep learning of free boundary and {S}tefan problems.
\newblock \emph{J. Comput. Phys.}, 428:\penalty0 Paper No. 109914, 24, 2021.

\bibitem[Yarotsky(2017)]{Yarotsky2017}
D.~Yarotsky.
\newblock {Error bounds for approximations with deep ReLU networks}.
\newblock \emph{Neural Netw.}, 94:\penalty0 103--114, 2017.

\end{thebibliography}
}
\end{document}